\numberwithin{equation}{section}
\newtheorem{theorem}{Theorem}[section]
\newtheorem{proposition}[theorem]{Proposition}
\newtheorem{lemma}[theorem]{Lemma}
\newtheorem{corollary}[theorem]{Corollary}
\newtheorem{conjetura}[theorem]{Conjecture}
\theoremstyle{definition}
\newtheorem{definition}[theorem]{Definition}
\newtheorem{example}[theorem]{Example}			
\newtheorem{idea}[theorem]{Idea}
\theoremstyle{remark}
\newtheorem{remark}[theorem]{Remark}
\newtheorem{notation}[theorem]{Notation}
\newtheorem{claim}[theorem]{Claim}
\definecolor{darkgreen}{cmyk}{1,0,1,.2}
\definecolor{m}{rgb}{1,0.1,1}
\newdimen\theight
\def\TeXref#1{%
             \leavevmode\vadjust{\setbox0=\hbox{{\tt
                     \quad\quad  {\small \textrm #1}}}%
             \theight=\ht0
             \advance\theight by \lineskip
             \kern -\theight \vbox to
             \theight{\rightline{\rlap{\box0}}%
             \vss}%
             }}%
\begin{document}

 \title[New applications of the Lefschetz number]{New applications and computations of the Lefschetz number of homeomorphisms and open maps}
 \thanks{The authors were partially supported by the grants PID2020-114474GB-I00 (AEI/FEDER, UE) and ED431C 2023/31 (Xunta de Galicia, FEDER). The second author was partially supported by Programa de axudas á etapa predoutoral da Xunta de Galicia.
 }

\author[J. Álvarez \and A. Majadas-Moure 
     ]{%
	Jesús A. Álvarez López \and Alejandro O. Majadas-Moure 
}

\address{
         Jesús A. Álvarez López\\
         Departamento de Matemáticas, Universidade de Santiago de Compostela,
           Spain}
         \email{jesus.alvarez@usc.es}
              
\address{
		 Alejandro O. Majadas-Moure \\
		 Departamento de Matemáticas, Universidade de Santiago de Compostela, Spain}
		 \email{alejandro.majadas@usc.es}
		
		

\begin{abstract} 
We show that the combinatorial Lefschetz number is a topological invariant. This is an important result in itself; in order to point it out, we will also work here several relevant consequences in different directions. The first of them is a significant simplification of the computations involved in obtaining the Lefschetz number of certain maps, as well as some new Lefschetz fixed-point theorems for unbounded spaces. Indeed, these ideas allow us to obtain a clear lower bound for the Nielsen number of a triad in some spaces, such as, for example, the connected sum of two $p$-tori ($p>2$). Another consequence, in the case of homeomorphisms, is that, in the classical axiomatic definition of the Lefschetz number, the wedge-of-circles axiom and the cofibration axiom can be replaced by the single axiom of topological invariance of the combinatorial Lefschetz number. Using the invariance of the combinatorial Lefschetz number we also generalize O'Neill's classical result about topological invariance of the fixed-point index and we prove a topological-invariance result for the relative Lefschetz number. We also generalize the combinatorial Lefschetz number from homeomorphisms to open maps and we obtain a new fixed-point theorem.

\end{abstract}



\maketitle

\section{Introduction}
The Lefschetz number is the main topological invariant used in fixed-point theory. The classical Lefschetz fixed-point theorem (for self-maps of compact simplicial complexes) is one of the main tools one can use when trying to detect fixed points of a map. However, the computations are sometimes really tedious and, in many non-compact spaces, the existence of a Lefschetz fixed-point theorem remains mostly unknown to this day, being one of the most important open problems in topological fixed-point theory \cite[Problem 3]{Brown2}.

When $f$ is a homeomorphism, the Lefschetz fixed-point theorem was extended in \cite{M-M1}  by Mosquera-Lois and the second author to non-necessarily closed $f$-invariant definable spaces inside a simplicial complex. Indeed, this new number provides a better way of counting the number of fixed points of a homeomorphism, as \cite[Example 4.3]{M-M1} shows. However, even if some topological invariance was proved in that paper \cite[Remark 3.5]{M-M1}, it was not proved that the combinatorial Lefschetz number was a topological invariant in the usual sense. In \cite[Corollary 3.4]{M-M2} and \cite[Theorem 2.12]{A-M-M} Mosquera-Lois and the authors presented more partial results of the topological invariance on the combinatorial Lefschetz number.

In this paper, we prove the topological invariance of the combinatorial Lefschetz number, completing the progress of \cite{M-M1,M-M2,A-M-M}. This has several important consequences, some of which we explore here.

Moreover, we extend the combinatorial Lefschetz number from a homeomorphism $f$ and definable subspaces $A\subset X$ such that $f(A)= A$ to open maps $f$ and definable subspaces $A\subset X$ such that $f(A)\subset A$ and $f(X\setminus A)\subset X\setminus A$. Consequently, a new Lefschetz fixed-point theorem is obtained for bounded spaces that are not necessarily closed (Theorem~\ref{teor punto fijo}). This new theorem will imply Theorem~\ref{teor pto fijo de la extension}, about the existence of fixed points of the extensions of maps.

The topological invariance of the combinatorial Lefschetz number has many applications. A showcase of the relevance of this tool is that it can even be applied in Nielsen theory to bound some Nielsen numbers in certain spaces, such as connected sums of tori (Theorem~\ref{idea acotacion nielsen}, Corollary~\ref{corolario nielsen}). Moreover, when the maps are open (and sometimes even homotopic to open maps), the use of the combinatorial Lefschetz number and, especially, of its topological invariance, also becomes a very powerful tool to compute the Lefschetz number. This is quite important since  simplifications sometimes allow the computation Lefschetz numbers in an almost straightforward way. In fact, we will show, that the topological invariance of the combinatorial Lefschetz number is equivalent to the cofibration and wedge-of-circles axioms in \cite{Arkowitz}. Indeed, this advantage of computing Lefschetz numbers with the combinatorial Lefschetz number can be exploited to obtain some techniques and fixed-point results in unbounded spaces (for example, Theorems~\ref{primer resultado gordo, wedge}, \ref{primer resultado gordo, grafos}, \ref{primer resultado gordo, super con borde} and \ref{primer resultado gordo, superficies}). Example~\ref{ej compactificaciones al reves} illustrates very well the relevance of the combinatorial Lefschetz number in these techniques.

Another consequence of this topological invariance is a generalization of O'Neill's classical result on the topological invariance of the fixed-point index \cite[Theorem 2.5]{ONeill}, which has been the main topological-invariance result for the index (not necessarily for maps with isolated fixed points) for more than $70$ years and which we generalize in Theorem~\ref{generalizacion oneil} for open maps. Furthermore, a topological-invariance result for the relative Lefschetz number (also known as the Lefschetz number of a pair of spaces \cite[Definition 4.1]{Gorniewicz}) is obtained in Corollary~\ref{inv numero relativo} as a direct consequence of the invariance of the combinatorial Lefschetz number.

In Section~\ref{inv top}, we prove the topological invariance of the Lefschetz number. As a first consequence, we obtain an interesting topological-invariance result for the relative Lefschetz number. In this section, we also extend the definition of combinatorial Lefschetz number from homeomorphisms to open maps. In Remark~\ref{nota amplitud def}, we show that this new context is clearly more general than that of \cite{M-M1}. We also see that this is almost all we can relax the hypotheses on the maps when defining the combinatorial Lefschetz number (Example~\ref{ej hipotesis}). Moreover, a new fixed-point theorem is obtained in Theorem~\ref{teor punto fijo}.


In Example~\ref{ejemplo mejor contar} we show that, even when the map is only open and not a homeomorphism, the combinatorial Lefschetz number provides a better way to count the number of fixed points than the classical Lefschetz number.

Finally, in Theorem~\ref{generalizacion oneil}, we obtain a generalization (for open maps) of O'Neill's classical result about the topological invariance of the fixed-point index.


In Section~\ref{axiomas y ejemplos}, we start by proving that the topological invariance of the combinatorial Lefschetz number of homeomorphisms is equivalent to the cofibration and wedge-of-circles axioms. Note that this is quite relevant, since the axiomatic constructions of the invariants used in fixed-point theory are an important topic in the area due to the applications of these constructions (see, for example, \cite{G-S,Staecker1,Staecker2}, or even \cite{Brown,ONeill}, or Dold's classic works for more examples of these axiomatic constructions).

The computation of the Nielsen numbers constitutes the main challenge in Nielsen theory and also one of the main interests in topological fixed-point theory. The fact that our theorem on topological invariance of the combinatorial Lefschetz number can be applied to obtain in Section~\ref{seccion cotas nielsen} some new results in Nielsen theory adds evidence for the usefulness of this topological invariance. Among the few computations existing in the literature, in that section, we highlight \cite{B-B-P-T}, where the Nielsen number is computed on the tori, and \cite{Brown fibrados}, where, under quite restricted hypothesis, a product rule is given for the Nielsen number on fiber spaces. Also, the authors of \cite{D-H-T} consider the case of some particular maps between orientable surfaces, paying special attention to some specific self-maps of the connected sum of two tori, and presenting a technique that made it possible to obtain in this surface the Nielsen number of some of the maps for which it remained unknown.

More precisely, in this Section~\ref{seccion cotas nielsen}, we explain a technique to obtain some lower bounds for the Nielsen number of a triad introduced in \cite{Schirmer}. However, the choice of this number instead of the classical Nielsen number is because it satisfies an additive property \cite[Theorem 4.12]{Schirmer} in the spaces we will deal with. If an additive behavior is found for the Nielsen number under some conditions, the same argument can be used to obtain new bounds.


In Section~\ref{subseccion de los ejemplos}, we present, through many examples, new techniques of computation of the Lefschetz number involving the combinatorial Lefschetz number and its topological invariance. As we will see, in many situations, this turns into a much better tool to compute the Lefschetz number, rather than the homological computation or the use of axioms in \cite{Arkowitz}. 

Finally, in Section~\ref{seccion aplicaciones practicas}, we begin by defining the combinatorial Lefschetz number on unbounded spaces. As a direct consequence, we obtain a result about the existence of new fixed points in the extensions of fixed-point-free maps (this topic is related with Nielsen theory of the complement and, more in general, with many works as, for example, \cite{K-B-L-D2,K-B-L-D,Schirmer2,Zhao2}). Next, we obtain some fixed-point results for unbounded spaces (Theorems~\ref{primer resultado gordo, wedge}, \ref{primer resultado gordo, grafos}, \ref{primer resultado gordo, super con borde} and \ref{primer resultado gordo, superficies}) and explain with some examples how these results become significantly relevant thanks to the topological invariance of the combinatorial Lefschetz number. 

Starting with \cite{Leray}, several works have been made in order to obtain Lefschetz fixed-point theorems on unbounded spaces; for example \cite{Browder2,E-F,Granas,Nussbaum1,Nussbaum2,Nussbaum3,Tromba}. The case of infinite-dimensional spaces has also been explored \cite{Browder1}. Nevertheless, little progress has been made recently in this direction, except for \cite{Cauty2,Cauty}. Indeed, compactness was always present in these results (which were stated for compact maps, CAC, condensing maps,...). One of the interests of the results we state here is that they come from a completely different argument: the bounds of the index. In fact, since the only compactness restriction we have is to study only proper maps, the results in Section~\ref{seccion aplicaciones practicas} can be regarded almost as the first work giving a Lefschetz fixed-point theorem on unbounded spaces without using compactness (in \cite{Hochs}, compactness is not used, but the maps are required to be isometries, which are much more restrictive than open maps or even homeomorphisms).

Finally, in Section~\ref{subseccion nuevas acotaciones}, we explain how the same ideas that we use to obtain fixed-point results for unbounded spaces can be used to find counterexamples of spaces where the index is not bounded.

In this paper, many examples will be presented. Each of them is focused on a specific advantage of the combinatorial Lefschetz number. Sometimes, the examples may look similar, but each of them presents nuances that make them important in the paper.


\subsection*{Acknowledgements} The second author thanks Michael Kelly for enlightening discussions. In particular, his help has been very useful for understanding the bounds of the index. He also thanks Chris Staecker for discussions concerning the Nielsen number.
\section{Preliminaires}

For a rigorous definition of the Lefschetz number, we recommend \cite{M-M1}. For a main idea and basic notions of $o$-minimal structures and definable spaces, we recommend \cite[Sec.~2]{M-M2}.
We recall here the main results that will be used. Unless otherwise stated, our simplicial complexes will be finite. We will write $\varLambda(f,X)$ for the usual Lefschetz number of a map $f$ on a simplicial complex $X$ and $\varLambda(f,U)_X$ for the combinatorial Lefschetz number of $f$ in $U$ relative to $X$ \cite[Definition 3.4]{M-M1}.

The Lefschetz combinatorial number, almost by definition, is additive in the following way:

\begin{proposition}[{Inclusion-exclusion principle \cite{M-M1}}]\label{inclusion exclusion}
    Let $X$ be a simplicial complex, $f:X\rightarrow X$ a homeomorphism, and $U$ and $V$ definable $f$-invariant {\rm(}$f(U)=U$ and $f(V)=V${\rm)} subsets of $X$. Then
    \begin{equation*}
        \varLambda (f,U\cup V)_X=\varLambda(f,U)_X+\varLambda(f,V)_X-\varLambda(f,U\cap V)_X\;.
    \end{equation*}
\end{proposition}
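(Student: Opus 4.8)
The plan is to read the identity off directly from the combinatorial definition \cite[Definition 3.4]{M-M1}, for which the only real content is arranging that all four numbers are computed in a single cell structure. First I would record that $U\cap V$ and $U\cup V$ are again definable and $f$-invariant: since $f$ is a homeomorphism it is bijective, so $f(U\cap V)=f(U)\cap f(V)=U\cap V$ and $f(U\cup V)=f(U)\cup f(V)=U\cup V$, using $f(U)=U$ and $f(V)=V$. Hence $\varLambda(f,U\cap V)_X$ and $\varLambda(f,U\cup V)_X$ are defined and the asserted identity makes sense.

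Next I would fix one definable cell decomposition $\mathcal{C}$ of $X$ that is simultaneously compatible with $U$ and $V$ (hence with $U\setminus V$, $V\setminus U$, $U\cap V$ and $U\cup V$) and along which $f$ sends cells to cells; its existence is the standard o-minimal cell-decomposition/triangulation adapted to finitely many definable sets together with the graph of $f$, and the resulting number is independent of this choice by the well-definedness established in \cite{M-M1}. For such a $\mathcal{C}$, Definition 3.4 expresses each combinatorial Lefschetz number as a sum of a local contribution over the cells it contains:
\[
\varLambda(f,W)_X=\sum_{\substack{\sigma\in\mathcal{C}\\ \sigma\subseteq W}}\iota(\sigma),\qquad W\in\{U,\,V,\,U\cap V,\,U\cup V\},
\]
where $\iota(\sigma)$ is the local term of $\sigma$ (vanishing unless $f(\sigma)=\sigma$, and otherwise equal to $(-1)^{\dim\sigma}$ times the local index of $f$ along $\sigma$). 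The key point is that $\iota(\sigma)$ depends only on $f$ and on $\sigma$, not on which of the four sets $W$ is under consideration; because $\mathcal{C}$ is compatible with $U$ and $V$, every cell lies entirely inside or entirely outside each $W$, so ``$\sigma\subseteq W$'' is meaningful and each $\varLambda(f,W)_X$ really is the sub-sum of the total over the invariant cells contained in $W$.

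Finally I would conclude by the elementary indicator identity applied cell by cell. For every $\sigma\in\mathcal{C}$ one has $\mathbf{1}_{U\cup V}(\sigma)=\mathbf{1}_{U}(\sigma)+\mathbf{1}_{V}(\sigma)-\mathbf{1}_{U\cap V}(\sigma)$, since each cell sits in exactly one of the four pieces $U\cap V$, $U\setminus V$, $V\setminus U$, $X\setminus(U\cup V)$. Multiplying by $\iota(\sigma)$ and summing over all cells of $\mathcal{C}$ promotes this pointwise identity to
\[
\varLambda(f,U\cup V)_X=\varLambda(f,U)_X+\varLambda(f,V)_X-\varLambda(f,U\cap V)_X,
\]
which is the claim.

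I expect the only genuine obstacle to be the bookkeeping behind the common decomposition, that is, guaranteeing that a single $f$-equivariant cell structure refines the data of both $U$ and $V$ and that passing to such a refinement does not alter any of the four numbers. Once the well-definedness and refinement-invariance from \cite{M-M1} are in hand, the statement is indeed ``almost by definition,'' as the counting collapses to the set-theoretic inclusion-exclusion for indicator functions and no homological computation is needed.
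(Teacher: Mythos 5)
Your proof is correct and is essentially the argument the paper intends: the proposition is stated as holding ``almost by definition'' (deferring to \cite{M-M1}), and your observation that each $\varLambda(f,W)_X$ is a sum of cell-local diagonal contributions over a single triangulation compatible with $U$ and $V$, so that the identity reduces to inclusion-exclusion for indicator functions, is exactly that definition unpacked. The only cosmetic imprecision is that the local term $\iota(\sigma)$ is the diagonal entry of the matrix of a fixed simplicial approximation $f^{\mathrm{simp}}$ (nonzero only when $f^{\mathrm{simp}}$ maps $\sigma$ onto itself), rather than a local index of $f$ itself, but this does not affect the argument.
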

\begin{remark}\label{relacion con relativo}
In particular, given a simplicial complex $X$, a homeomorphism $f:X\rightarrow X$ and a definable $f$-invariant subspace $A\subset X$, we have
\begin{equation*}
    \varLambda(f,X)=\varLambda(f,X)_X=\varLambda(f,A)_X+\varLambda(f,X\setminus A)_X\;,
\end{equation*}
where the first equality follows from the definition of the combinatorial Lefschetz number \cite{M-M1}, since $X$ is compact. But this equation is the same as
\begin{equation*}
    \varLambda(f,A)_X=\varLambda(f,X)-\varLambda(f,X\setminus A)_X\;.
\end{equation*}
Now, if $A$ is open in $X$, we have that $X\setminus A$ is, under certain triangulation (see Theorem~\ref{thm:triangulation}), a subcomplex of $X$, and so $\varLambda(f,X\setminus A)_X=\varLambda(f_{|X\setminus A},X\setminus A)$.
But, from \cite[Property 4.4]{Gorniewicz}, we know that, if $\varLambda(f,X)$ and $\varLambda(f_{|X\setminus A},X\setminus A)$ are defined, then
\begin{equation*}
    \varLambda(f,X)-\varLambda(f_{|X\setminus A},X\setminus A)=\varLambda(f,(X,X\setminus A))\;,
\end{equation*}
where $\varLambda(f,(X,X\setminus A))$ denotes the relative Lefschetz number \cite[Definition 4.1]{Gorniewicz}. As a consequence, if $A$ is open, we have $\varLambda(f,A)_X=\varLambda(f,(X,X\setminus A))$. So the combinatorial Lefschetz number is a generalization of the relative Lefschetz number since, in order to consider $\varLambda(f,A)_X$, $A$ does not need to be open.
\end{remark}

We may assume that our o-minimal structures contain the semilinear sets. In this case:

\begin{theorem}[{Definable-triangulation theorem \cite{Dries}}] \label{thm:triangulation} 
	Let $X\subset \mathbb{R}^n$ be a definable set and let $\{X_i\}_{i=1}^{m}$ be a finite family of definable subsets of $X$. Then there exists a definable triangulation of $X$ compatible with the collection of subsets.
\end{theorem}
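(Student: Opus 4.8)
The plan is to argue by induction on the ambient dimension $n$, after first disposing of boundedness. Recall that a triangulation of $X$ compatible with $\{X_i\}_{i=1}^m$ consists of a finite simplicial complex $\mathcal{K}$ together with a definable homeomorphism $\varphi\colon|\mathcal{K}|\to X$ such that each $X_i$ is the union of the $\varphi$-images of a subfamily of the open simplices of $\mathcal{K}$. Since $\R^n$ is definably homeomorphic to the bounded open box $(-1,1)^n$ via the coordinatewise map $t\mapsto t/\sqrt{1+t^2}$, I may replace $X$ and each $X_i$ by their bounded images and assume from now on that all the sets are bounded; a compatible triangulation of the images then pulls back to one of the originals. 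The case $n=1$ is the base of the induction: every bounded definable subset of $\R$ is a finite disjoint union of points and open intervals, and refining the endpoints of all the $X_i$ simultaneously produces the desired compatible triangulation directly.

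For the inductive step I would first apply the cell decomposition theorem --- one of the foundational structure results for o-minimal structures --- to obtain a decomposition of $\R^n$ into finitely many cells that partitions $X$ and is compatible with every $X_i$, in the sense that each cell is contained in, or disjoint from, each of these sets. After a generic linear change of the $x_n$-coordinate I put the decomposition in good position with respect to the projection $\pi\colon\R^n\to\R^{n-1}$, so that each cell lying over a fixed base cell $D\subseteq\R^{n-1}$ is either the graph of a continuous definable section $\xi\colon D\to\R$ or a band $\{(\bar x,t):\xi_i(\bar x)<t<\xi_{i+1}(\bar x)\}$ between two consecutive sections $\xi_1<\cdots<\xi_k$ on $D$. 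The problem then splits into triangulating the base $\R^{n-1}$ compatibly and lifting the result through these sections and bands.

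To triangulate the base I would invoke the inductive hypothesis in dimension $n-1$, but compatibly with an enlarged family: besides the projected domains $D$ and the traces of the $X_i$, I include the lower-dimensional definable ``bad locus'' $B\subseteq\R^{n-1}$ consisting of the points near which some section $\xi_i$ fails to extend continuously or two consecutive sections become asymptotically equal. By o-minimal dimension theory $B$ has empty interior in each $D$, so after triangulating the base compatibly with $B$ every open base simplex $\sigma$ meets $B$ only inside lower-dimensional skeleta; over such a $\sigma$ the sections $\xi_1<\cdots<\xi_k$ extend continuously to the closed simplex $\overline\sigma$ and keep a constant incidence pattern. The part of $X$ lying above $\overline\sigma$ is then a union of graphs and bands of continuous functions on a simplex, which I triangulate by the standard prism (coning) subdivision of $\overline\sigma\times[0,1]$ adapted to the graphs; since these subdivisions are built over the already-triangulated base and agree on common faces, they glue into a single finite simplicial complex $\mathcal{K}$ with a definable homeomorphism $\varphi\colon|\mathcal{K}|\to X$ compatible with all the $X_i$.

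The main obstacle is concentrated in the passage from open base simplices to their closures: one must guarantee that the section functions genuinely extend continuously to $\overline\sigma$ and that the combinatorial pattern of which graphs collide and which bands collapse is constant on each open $\sigma$, for otherwise the local prisms will fail to glue. This is exactly what forces the introduction of the bad locus $B$ and the use of o-minimal dimension theory to ensure $\dim B<\dim D$, so that the obstruction can be absorbed into lower-dimensional skeleta handled by the inductive hypothesis; the monotonicity theorem, which supplies one-sided limits of bounded definable functions, is what makes these continuous extensions available in the first place. The genuinely delicate point is the simultaneous bookkeeping that keeps the base triangulation, the sections, and the finitely many $X_i$ all mutually compatible, whereas the coning construction over a single simplex is routine.
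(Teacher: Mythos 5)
This statement is not proved in the paper at all: it is imported verbatim from van den Dries's book (the citation \cite{Dries}), so there is no in-paper argument to compare yours against. What you have written is a sketch of the standard proof from that source --- reduction to the bounded case, induction on the ambient dimension, cell decomposition into graphs and bands over a base, inductive triangulation of the base, and lifting by prism subdivisions --- and at that level of resolution the strategy is the right one.

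There is, however, one step where your proposed mechanism does not actually do the work you assign to it. You introduce the bad locus $B\subseteq\R^{n-1}$ of points where some section $\xi_i$ fails to extend continuously or two sections collide, note that $\dim B<\dim D$, and conclude that after triangulating the base compatibly with $B$ ``the sections extend continuously to the closed simplex $\overline\sigma$'' for every open simplex $\sigma$ disjoint from $B$. That inference is false as stated: compatibility only guarantees that each \emph{open} simplex is contained in or disjoint from $B$, and a simplex $\sigma$ disjoint from $B$ can perfectly well have boundary faces lying inside $B$. Those boundary faces are exactly where the prism construction needs the sections to have continuous extensions, so pushing $B$ into the lower-dimensional skeleton does not remove the obstruction --- it relocates it to the place where it bites. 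The standard proofs resolve this differently: one strengthens the inductive statement (triangulating closures together with the sets and their frontiers) and proves, via a good choice of linear coordinates, that the cell decomposition can be taken so that every section extends continuously to the closure of its base cell and the incidence pattern of the $\xi_i$ on that closure is controlled (this is the content of the ``special decomposition'' lemma in van den Dries, Chapter 8). Without that lemma, or some substitute for it, the gluing of the local prisms in your third paragraph does not go through, so the proposal as written has a genuine gap precisely at the point you yourself identify as delicate.
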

\begin{remark}\label{obs vale para homeomorfos a definibles}
    Even if all results in \cite{M-M1} and in the current paper are stated for definable spaces, they can be carefully generalized for spaces that are homeomorphic to such definable spaces. For example, given a compact space $L\subset\mathbb{R}^n$, a homeomorphisms $g:L\to L$ and a $g$-invariant subspace $A\subset L$, if $(L,A,g)$ can be triangulated by $(X,U,f)$, where $X$ is a simplicial complex and $U$ is a definable $f$-invariant subspace for a homeomorphism $f:X\rightarrow X$, we can define $\varLambda(g,A)_L$ as $\varLambda(f,U)_X$. This is well defined. The proof is analogous to that of \cite[Lemma 3.11]{M-M1}. This is a broad generalization since the family of spaces we can work with is much larger than the definable ones. However, since this paper focuses on highlighting the relevance of the topological invariance of the combinatorial Lefschetz number through various applications where different tools are needed, by simplicity in the statements, we will continue to write the results in terms of definable spaces.
\end{remark}

\section{Topological invariance of the combinatorial Lefschetz number}\label{inv top}

From the definition of the combinatorial Lefschetz number, it can easily be obtained that, given homeomorphisms $f:X\rightarrow X$ and $g:Y\rightarrow Y$ and $f$- and $g$-invariant definable sets $A\subset X$ and $B\subset Y$ such that there exists a homeomorphism $h:\overline{A}\rightarrow \overline{B}$ so that $h(A)=B$ and
\[\begin{tikzcd}
	{\overline{A}} & {\overline{A}} \\
	{\overline{B}} & {\overline{B}}
	\arrow["f", from=1-1, to=1-2]
	\arrow["h"', from=1-1, to=2-1]
	\arrow["h"', from=1-2, to=2-2]
	\arrow["g", from=2-1, to=2-2]
\end{tikzcd}\]
commutes, we have $\varLambda(f,A)_X=\varLambda(g,B)_Y$.

The problem is that, when $h:\overline A\to\overline B$ is not given, even if $A$ and $B$ are homeomorphic, their closures need not to be homeomorphic. 

In address these possibilities, some partial results are obtained in \cite[Corollary 3.4]{M-M2} and in \cite[Theorem 2.12]{A-M-M}. The one in \cite[Corollary 3.4]{M-M2} is more categorical and direct, but it is based on some restrictive hypothesis (for example the homeomorphism must be cellular). However, in \cite[Theorem 2.12]{A-M-M}, a result is obtained when $f$ has no fixed points in $\overline{A}\setminus\mathring{A}$. The idea is that, in this case, the combinatorial Lefschetz number equals the index of $f$ at the interior of $A$ and, under certain hypothesis, it is possible to apply the topological invariance of the index given in \cite[Theorem 2.5]{ONeill}. In fact, this relation between the index and the combinatorial Lefschetz number is what will allow us to extend \cite[Theorem 2.5]{ONeill} in Section~\ref{inv top}, once we have proved the topological invariance of the combinatorial Lefschetz number in a completely different way.

To prove the topological invariance, we need a preliminary lemma. All over this section, $\mathring{A}$ will always denote the interior of $A$ in $\overline{A}$.

\begin{lemma}\label{lema principal}
    Let $A\subset X$ and $B\subset Y$ be definable sets of simplical complexes $X$ and $Y$ such that there exists a homeomorphism $h:A\rightarrow B$. Consider the closures $\overline{A}$ and $\overline{B}$ of $A$ and $B$ in $X$ and $Y$. Then, $h$ maps $\mathring{A}$ to $\mathring{B}$ and hence {\rm(}$f$ is a homemorphism{\rm)} $A\setminus \mathring{A}$ to $B\setminus\mathring{B}$, where $\mathring{A}$ {\rm(}resp., $\mathring{B}${\rm)} denotes the interior of $A$ in $\overline{A}$ {\rm(}resp., $\overline{B}${\rm)}.
\end{lemma}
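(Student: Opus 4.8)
The plan is to give a purely topological, intrinsic characterization of $\mathring A$ that makes no reference to the ambient complex $X$, and then simply invoke that $h$ is a homeomorphism. Concretely, I claim
\[
\mathring A=\{a\in A: A\text{ is locally compact at }a\},
\]
where \emph{locally compact at $a$} means that $a$ admits a compact neighbourhood in $A$, and likewise for $B\subseteq\overline B$. Since $X$ and $Y$ are finite simplicial complexes they are compact and Hausdorff, so $\overline A$ and $\overline B$ are compact Hausdorff; moreover $A$ is dense in $\overline A$ (and $B$ in $\overline B$) by the very definition of closure. These are the only properties of the ambient spaces I will use.

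First I would prove the inclusion $\subseteq$. If $a\in\mathring A$, then $\mathring A$ is open in $\overline A$ and contained in $A$, hence open in $A$; being an open subspace of the compact Hausdorff space $\overline A$, it is locally compact. Thus $a$ has a compact neighbourhood inside $\mathring A$, which is a fortiori a compact neighbourhood of $a$ in $A$.

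Next comes the reverse inclusion $\supseteq$, which is the heart of the argument. Suppose $A$ is locally compact at $a$, so there are $V$ open in $A$ and $K$ compact in $A$ with $a\in V\subseteq K$. Write $V=A\cap W$ with $W$ open in $\overline A$. Since $\overline A$ is Hausdorff and $K$ is compact, $K$ is closed in $\overline A$, so the closure $\overline V$ of $V$ in $\overline A$ satisfies $\overline V\subseteq K\subseteq A$. The decisive step uses density: because $A$ is dense in $\overline A$ and $W$ is open, every point of $W$ is a limit of points of $A\cap W=V$, whence $W\subseteq\overline V\subseteq A$. Therefore $W$ is a neighbourhood of $a$ in $\overline A$ contained in $A$, i.e. $a\in\mathring A$. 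The same characterization holds verbatim for $B$.

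Finally, local compactness at a point is a property of the topological space $A$ alone, so it is preserved by the homeomorphism $h$: a point $a\in A$ has a compact neighbourhood in $A$ if and only if $h(a)$ has one in $B$. Combined with the two characterizations this yields $h(\mathring A)=\mathring B$, and since $h\colon A\to B$ is a bijection, $h(A\setminus\mathring A)=B\setminus\mathring B$, as required. The only genuinely delicate point is the reverse inclusion, and within it the density argument $W\subseteq\overline V$; I would emphasize that definability plays no role whatsoever, the statement being purely topological once the compactness and Hausdorff property of $\overline A$ and $\overline B$ are at hand.
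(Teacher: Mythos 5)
Your proof is correct, and it takes a genuinely different route from the paper's. The paper argues by contradiction on a single pair of points: assuming $a\in\mathring A$ maps to $b\in B\setminus\mathring B$, it picks a neighbourhood $U$ of $a$ with $\overline U\subset\mathring A$, produces a sequence in $h(U)$ converging to a point of $\overline B\setminus B$, pulls it back to $U$, extracts a convergent subsequence by compactness of $\overline U$, and derives a contradiction with continuity; the reverse inclusion then follows by symmetry applied to $h^{-1}$. You instead isolate the intrinsic content of the lemma by proving the characterization $\mathring A=\{a\in A:\ A\ \text{is locally compact at}\ a\}$, which is essentially the pointwise version of the classical fact that a locally compact dense subspace of a Hausdorff space is open; your density step $W\subseteq\overline V\subseteq K\subseteq A$ is the correct crux and is carried out properly. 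Both arguments rest on the same two ingredients (compactness of the ambient closure and the Hausdorff property), but yours buys a cleaner conclusion: since local compactness at a point is manifestly preserved by any homeomorphism, the equality $h(\mathring A)=\mathring B$ drops out in one stroke, with no need to run the argument separately for $h$ and $h^{-1}$, and it makes transparent (as you note) that definability is irrelevant here. The paper's sequential argument is more elementary but more ad hoc; it also tacitly relies on metrizability (sequences suffice), which your argument does not.
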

\begin{proof}
    Let $b\in B\setminus\mathring{B}$ and $a\in\mathring{A}$ such that $h(a)=b$. Consider open neighborhoods $U$ of $a$ in $A$ and $V$ of $b$ in $B$, such that $\overline{U}\subset\mathring{A}$ and $h$ maps $U$ homeomorphically to $V$. Since $V$ is open in the relative topology of $B$, there exist $V'$ open in $\overline{B}$ such that $V'\cap B=V$. Then, since $b\in B\setminus\mathring{B}$, there exists some $b'\in\overline{B}\setminus B$ contained in $V'$, and since $b'\in\overline{B}$, there must exist a sequence $\{b_n\}_{n\in\mathbb{N}}$ contained in $V'\cap B=V$ which converges to $b'$. Consider now the sequence $\{a_n\}_{n\in\mathbb{N}}$ consisting of the inverse images of $\{b_n\}$ by $f$. We have that $\{a_n\}$ is contained in $U$ and, since $\overline{U}\subset\mathring{A}$ is bounded and hence compact, there must exist a subsequence $\{a_{n_k}\}_{k\in\mathbb{N}}$ which converges to a point $a'\in A$. Then, by continuity, $\{f(a_{n_k})\}=\{b_{n_k}\}$ must converge to $f(a')\in B$. But $\{b_n\}$ (and hence also $\{b_{n_k}\}$) converges to $b'\notin B$.\qedhere
\end{proof}

Now we can prove the topological invariance.

\begin{theorem}\label{teor inv topo}
    Let $A\subset X$ and $B\subset Y$ be definable sets of simplical complexes $X$ and $Y$ such that there exists homeomorphisms $f:X\rightarrow X$ and $g:Y\rightarrow Y$ which restrict to homeomorphisms $f_{|A}:A\rightarrow A$ and $g_{|B}:B\rightarrow B$ and such that there is a homeomorphism $h:A\rightarrow B$ making the diagram
    \[\begin{tikzcd}
	A & B \\
	A & B
	\arrow["h", from=1-1, to=1-2]
	\arrow["f"', from=1-1, to=2-1]
	\arrow["g"', from=1-2, to=2-2]
	\arrow["h"', from=2-1, to=2-2]
\end{tikzcd}\]
commutative. Then $\varLambda(f,A)_X=\varLambda(g,B)_Y$, where $\varLambda(f,A)_X$ denotes the combinatorial Lefschetz number of $f$ in $A$ with respect to $X$.
\end{theorem}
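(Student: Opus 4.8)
The plan is to argue by induction on $d=\dim A$, using the additivity of the combinatorial Lefschetz number (Proposition~\ref{inclusion exclusion}) to split $A$ into its interior $\mathring{A}$ (which is open in $\overline{A}$) and the frontier piece $A\setminus\mathring{A}$, and then to treat these two summands by completely different means. First I would record that, since $f$ is a homeomorphism of $X$ with $f(A)=A$, it restricts to a homeomorphism of $\overline{A}$, whence $f(\mathring{A})=\mathring{A}$ and $f(A\setminus\mathring{A})=A\setminus\mathring{A}$; the same holds for $g$. As $\mathring{A}$ and $A\setminus\mathring{A}$ are disjoint, definable and $f$-invariant, Proposition~\ref{inclusion exclusion} gives
\[
\varLambda(f,A)_X=\varLambda(f,\mathring{A})_X+\varLambda(f,A\setminus\mathring{A})_X,
\]
and likewise for $g$ on $B$. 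By Lemma~\ref{lema principal}, $h$ restricts to homeomorphisms $\mathring{A}\to\mathring{B}$ and $A\setminus\mathring{A}\to B\setminus\mathring{B}$, each commuting with the corresponding restrictions of $f$ and $g$, so it suffices to match the two summands separately.

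For the frontier summand I would invoke the induction hypothesis. A standard o-minimal dimension argument shows that the frontier part of a nonempty definable set is strictly lower dimensional, i.e.\ $\dim(A\setminus\mathring{A})<\dim A$ (and $\mathring{A}\ne\emptyset$ whenever $A\ne\emptyset$). Since $f$ and $g$ restrict to homeomorphisms of $A\setminus\mathring{A}$ and $B\setminus\mathring{B}$ and $h$ conjugates them, the triple $(A\setminus\mathring{A},B\setminus\mathring{B},h)$ satisfies the hypotheses of the theorem in dimension $<d$; hence $\varLambda(f,A\setminus\mathring{A})_X=\varLambda(g,B\setminus\mathring{B})_Y$ by induction. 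The base case $d=0$ is immediate: there $A$ is finite, $\mathring{A}=A$, and $\varLambda(f,A)_X$ is just the number of points of $A$ fixed by $f_{|A}$, which $h$ preserves.

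The heart of the argument—and the step I expect to be the main obstacle—is the open summand, where the closures of $\mathring{A}$ and $\mathring{B}$ need not be homeomorphic, so the naive invariance from the beginning of the section does not apply. The key point to establish is that the combinatorial Lefschetz number of an open set does not really depend on its closure. Concretely, $\mathring{A}$ is open in the compact polyhedron $\overline{\mathring{A}}$, so, in the spirit of Remark~\ref{relacion con relativo} (applied inside $\overline{\mathring{A}}$, together with the locality of the combinatorial number and the Hopf trace formula), one should get $\varLambda(f,\mathring{A})_X=\varLambda\big(f,(\overline{\mathring{A}},\,\overline{\mathring{A}}\setminus\mathring{A})\big)$, the relative Lefschetz number of the pair. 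Passing to the quotient, $H_*(\overline{\mathring{A}},\overline{\mathring{A}}\setminus\mathring{A})\cong\widetilde{H}_*\big(\overline{\mathring{A}}/(\overline{\mathring{A}}\setminus\mathring{A})\big)$, and, since $\mathring{A}$ is open in the compact set $\overline{\mathring{A}}$, this quotient is exactly the one-point compactification $(\mathring{A})^{+}$. Thus $\varLambda(f,\mathring{A})_X$ equals the Lefschetz number (on reduced homology) of the induced self-map $\bar f$ of $(\mathring{A})^{+}$, an object built solely from $(\mathring{A},f_{|\mathring{A}})$.

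To finish I would use that a homeomorphism is proper: the restriction $h\colon\mathring{A}\to\mathring{B}$ extends to a pointed homeomorphism $h^{+}\colon(\mathring{A})^{+}\to(\mathring{B})^{+}$ conjugating the pointed extensions $\bar f$ and $\bar g$. Since $(\mathring{A})^{+}$ and $(\mathring{B})^{+}$ are homeomorphic compact polyhedra and $\bar f,\bar g$ are conjugate through $h^{+}$, their reduced Lefschetz numbers coincide by the ordinary topological invariance of the Lefschetz number, so $\varLambda(f,\mathring{A})_X=\varLambda(g,\mathring{B})_Y$; combining with the frontier summand completes the induction. The delicate points to be verified carefully are precisely the locality of $\varLambda(\,\cdot\,)_X$ and the Hopf-trace identification of $\varLambda(f,\mathring{A})_X$ with the relative Lefschetz number of $(\overline{\mathring{A}},\overline{\mathring{A}}\setminus\mathring{A})$, and the identification of the collapsed pair with $(\mathring{A})^{+}$; these are exactly what make the dependence on the closure disappear, which is why the theorem can hold even though $\overline{A}$ and $\overline{B}$ may fail to be homeomorphic.
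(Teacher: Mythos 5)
Your proposal is correct and follows essentially the same route as the paper's proof: the same splitting $A=\mathring{A}\sqcup(A\setminus\mathring{A})$ via additivity, the same use of Lemma~\ref{lema principal} plus induction on dimension for the frontier summand, and the same identification of $\varLambda(f,\mathring{A})$ with the reduced Lefschetz number of the induced map on $\overline{A}/(\overline{A}\setminus\mathring{A})$, which is exactly the one-point compactification $(\mathring{A})^{+}$ you describe. The only cosmetic difference is that you phrase this last step through the relative Lefschetz number of a pair, while the paper invokes the cofibration axiom of \cite{Arkowitz} to reach the identity $\varLambda(f,\mathring{A})_{\overline{A}}=\varLambda(\tilde f,\overline{A}/(\overline{\overline{A}\setminus A}))-1$; these are the same computation.
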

\begin{proof}
Due to the definable-triangulation theorem of definable sets \cite{Dries}, we can assume that $A$ and $B$ are generalized simplicial complexes (this means a union of open faces of the simplex).
We proceed by induction on the dimensions of $A$ and $B$. If $A$ and $B$ are of dimension $0$, each one is a finite set of points and the result follows.

    Since $\varLambda(f,A)_X=\varLambda(f,A)_{\overline{A}}$ and $\varLambda(g,B)_Y=\varLambda(g,B)_{\overline{B}}$ \cite[Theorem~3.9]{M-M1}, we may assume $X=\overline{A}$ and $Y=\overline{B}$. To simplify, we will write $f$ instead of $f_{|\overline{A}}$.

    We have descompositions
    \begin{align*}
        &\overline{A}=\mathring{A}\sqcup (\overline{A}\setminus A) \sqcup (\overline{\overline{A}\setminus A}\cap A)\\
        &\overline{B}=\mathring{B}\sqcup (\overline{B}\setminus B) \sqcup (\overline{\overline{B}\setminus B}\cap B)\;.
    \end{align*}  
 (Remember that $\mathring{A}$ denotes the interior of $A$ as subspace of $\overline{A}$). Indeed, since $A$ (resp., $B$) is $f$-invariant (resp., $g$-invariant) and definable, and $\overline{A}$ and $\overline{B}$ are definable (\cite[Lemma 1.3.4]{Dries} shows that the interior and closure of definable sets are definable), we have that $\overline{A}\setminus A$ (resp., $\overline{B}\setminus B$), $\mathring{A}$ (resp., $\mathring{B}$) , $\overline{\overline{A}\setminus A}$ (resp., $\overline{\overline{B}\setminus B}$) and $\overline{\overline{A}\setminus A}\cap A$ (resp., $\overline{\overline{B}\setminus B}\cap B$) are $f$-invariant (resp., $g$-invariant) and definable.
Then, by the additivity of the combinatorial Lefschetz number, 
\begin{multline}\label{primera identidad}
    \varLambda(f,\overline{A})=\varLambda(f,\overline{A})_{\overline{A}}\\=\varLambda(f,\mathring{A})_{\overline{A}}+\varLambda(f,\overline{A}\setminus A)_{\overline{A}}+\varLambda(f,\overline{\overline{A}\setminus A}\cap A)_{\overline{A}}\;,
\end{multline}
On the other hand, due to the cofibration axiom of the Lefschetz number \cite[Theorem 1.1]{Arkowitz}, we have
\begin{equation}\label{segunda identidad}
    \varLambda(f,\overline{A})=\varLambda(f_{\overline{\overline{A}\setminus A}},\overline{\overline{A}\setminus A})+\varLambda(\tilde{f},\overline{A}/(\overline{\overline{A}\setminus A}))-1\;,
\end{equation}
where $\tilde{f}$ is the map induced by $f$ in the quotient. 
Due to the additivity of the combinatorial Lefschetz number and to \cite[Theorem 3.9]{M-M1}, we have
\begin{multline}\label{tercera identidad}
    \varLambda(f_{|\overline{\overline{A}\setminus A}},\overline{\overline{A}\setminus A})=\varLambda(f_{|\overline{\overline{A}\setminus A}},\overline{\overline{A}\setminus A})_{\overline{\overline{A}\setminus A}}=\varLambda(f,\overline{\overline{A}\setminus A})_{\overline{A}}\\=\varLambda(f,\overline{A}\setminus A)_{\overline{A}} + \varLambda(f,\overline{\overline{A}\setminus A}\cap A)_{\overline{A}}\;.
\end{multline}

Now, combining Equations \eqref{primera identidad}, \eqref{segunda identidad} and \eqref{tercera identidad}, we obtain
\begin{equation}\label{cuarta identidad}
    \varLambda(f,\mathring{A})_{\overline{A}}=\varLambda(\tilde{f},\overline{A}/(\overline{\overline{A}\setminus A}))-1\;.
\end{equation}
In a similar way, we obtain
\begin{equation}\label{quinta identidad}
    \varLambda(g,\mathring{B})_{\overline{B}}=\varLambda(\tilde{g},\overline{B}/(\overline{\overline{B}\setminus B}))-1\;.
\end{equation}
Since
\begin{equation*}
    \varLambda(f,A)_{\overline{A}}=\varLambda(f,\mathring{A})_{\overline{A}}+\varLambda(f,\overline{\overline{A}\setminus A}\cap A)_{\overline{A}}\;,
\end{equation*}
\begin{equation*}
    \varLambda(g,B)_{\overline{B}}=\varLambda(g,\mathring{B})_{\overline{B}}+\varLambda(g,\overline{\overline{B}\setminus B}\cap B)_{\overline{B}}\;,
\end{equation*}
we will prove 
\begin{equation}\label{ecuacion accesoria}
\varLambda(f,\mathring{A})_{\overline{A}}=\varLambda(g,\mathring{B})_{\overline{B}}\;,\quad\varLambda(f,\overline{\overline{A}\setminus A}\cap A)_{\overline{A}}=\varLambda(g,\overline{\overline{B}\setminus B}\cap B)_{\overline{B}}\;.
\end{equation}

To prove the last equality of \eqref{ecuacion accesoria}, it suffices to show that $h$ maps $\overline{\overline{A}\setminus A}\cap A$ homeomorphically to $\overline{\overline{B}\setminus B}\cap B$, since, as  $\mathrm{dim}\bigl(\overline{\overline{A}\setminus A}\cap A\bigr)<\mathrm{dim}(A)$, we apply induction. But this follows from Lemma~\ref{lema principal}.

Using Equations \eqref{cuarta identidad} and \eqref{quinta identidad}, to show the first equality of \eqref{ecuacion accesoria}, it is enough to show that
\begin{equation*}
    \varLambda(\tilde{f},\overline{A}/(\overline{\overline{A}\setminus A}))=\varLambda(\tilde{g},\overline{B}/(\overline{\overline{B}\setminus B}).
\end{equation*}
But this follows since both spaces are homeomorphic (using again Lemma~\ref{lema principal}).
\end{proof}

So we have proved the following:

\begin{theorem}
    The combinatorial Lefschetz number is a topological invariant.
\end{theorem}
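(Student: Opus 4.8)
The plan is to recognize that this final statement is nothing more than the invariant-theoretic reformulation of Theorem~\ref{teor inv topo}, so that essentially all the work has already been carried out. First I would make precise what "topological invariant" is supposed to mean here. The data defining $\varLambda(f,A)_X$ consist of an ambient simplicial complex $X$, a homeomorphism $f\colon X\to X$, and a definable $f$-invariant subset $A\subset X$; but the only genuinely topological-dynamical content is the pair $(A,f_{|A})$, namely the space $A$ together with the self-homeomorphism $f_{|A}\colon A\to A$. The assertion that $\varLambda(f,A)_X$ is a topological invariant then means precisely that its value depends only on $(A,f_{|A})$ up to topological conjugacy, and not on the choice of ambient complex $X$ nor on the manner in which $f_{|A}$ is extended to a self-homeomorphism of $X$.

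With this formulation fixed, I would observe that two configurations $(X,f,A)$ and $(Y,g,B)$ realize topologically conjugate restricted dynamics exactly when there is a homeomorphism $h\colon A\to B$ with $h\circ f_{|A}=g_{|B}\circ h$, which is verbatim the commutativity of the square in the hypothesis of Theorem~\ref{teor inv topo}. Hence the conclusion $\varLambda(f,A)_X=\varLambda(g,B)_Y$ of that theorem is literally the invariance claim, and the proof is completed by direct appeal to Theorem~\ref{teor inv topo}. If one wishes the statement in its broadest form, I would additionally invoke Remark~\ref{obs vale para homeomorfos a definibles} to transport the conclusion to spaces merely homeomorphic to definable ones.

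I do not anticipate any genuine obstacle at this stage, since the entire difficulty has been absorbed into the proof of Theorem~\ref{teor inv topo}. The single point that merits care is confirming that the hypotheses of that theorem really are the minimal data defining topological equivalence of two triples: one must check that no hidden compatibility between the closures $\overline{A}$ and $\overline{B}$, nor between the ambient extensions $f$ and $g$ outside $A$ and $B$, is secretly being assumed. This is exactly the subtlety flagged in the discussion preceding Lemma~\ref{lema principal}, where it is noted that homeomorphic $A$ and $B$ need not have homeomorphic closures, and it is resolved there and in the inductive argument of Theorem~\ref{teor inv topo}; so no further difficulty arises in deducing the invariance statement.
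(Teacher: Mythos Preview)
Your proposal is correct and matches the paper's approach exactly: the paper presents this theorem immediately after Theorem~\ref{teor inv topo} with nothing more than the sentence ``So we have proved the following,'' treating it as a direct reformulation of that result. Your discussion of what ``topological invariant'' means here and why Theorem~\ref{teor inv topo} delivers precisely that is accurate and, if anything, more explicit than the paper itself.
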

From Remark~\ref{relacion con relativo}, we get the following corollary.
\begin{corollary}\label{inv numero relativo}
    Let $X$ be a simplicial complex and $C\subset X$ a subcomplex. Let $f:X\rightarrow X$ be a homeomorphism such that $f(C)=C$ {\rm(}using Remark~\ref{obs numero para abtas es inv} we can extend this result to open maps such that $f(C)\subset C$ and $f(X\setminus C)\subset X\setminus C${\rm)}. Consider now another simplicial complex $Y$ with a subcomplex $D$ and a homeomorphism $g:Y\rightarrow Y$ such that $g(D)=D$. If there exists a homeomorphism $h:X\setminus C\rightarrow Y\setminus D$ such that $g_{|Y\setminus D}\circ h=h\circ f_{|X\setminus C}$, then 
    \begin{equation*}
        \varLambda(f,(X,C))=\varLambda(g,(Y,D))\;.
    \end{equation*}
\end{corollary}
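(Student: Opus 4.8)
The plan is to reduce the statement entirely to the topological invariance already established in Theorem~\ref{teor inv topo}, translating between the relative Lefschetz number and the combinatorial one by means of Remark~\ref{relacion con relativo}. First I would set $A=X\setminus C$ and $B=Y\setminus D$. Since $C$ is a subcomplex of $X$ (and $D$ of $Y$), both $A$ and $B$ are open in the respective ambient complexes; they are definable, being unions of open faces; and because $f$ is a homeomorphism with $f(C)=C$, the set $A$ is $f$-invariant and $f$ restricts to a homeomorphism $f_{|A}:A\to A$. The same holds for $g$ and $B$. Thus $A$ and $B$ meet all the standing hypotheses imposed on the invariant definable sets appearing in Theorem~\ref{teor inv topo}.

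Next I would invoke Remark~\ref{relacion con relativo}. Since $A=X\setminus C$ is open and $C=X\setminus A$ is the complementary subcomplex, that remark identifies
\begin{equation*}
\varLambda(f,A)_X=\varLambda\bigl(f,(X,C)\bigr),\qquad \varLambda(g,B)_Y=\varLambda\bigl(g,(Y,D)\bigr).
\end{equation*}
In other words, the two relative Lefschetz numbers in the statement are literally the combinatorial Lefschetz numbers of the open invariant sets $A$ and $B$, so it suffices to compare $\varLambda(f,A)_X$ with $\varLambda(g,B)_Y$.

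Here the hypothesis is tailored to Theorem~\ref{teor inv topo}: the given homeomorphism $h:X\setminus C\to Y\setminus D$ is precisely a homeomorphism $h:A\to B$, and the relation $g_{|Y\setminus D}\circ h=h\circ f_{|X\setminus C}$ is exactly the commutativity of the square required by that theorem. Hence Theorem~\ref{teor inv topo} yields $\varLambda(f,A)_X=\varLambda(g,B)_Y$, and combining with the two identities above gives $\varLambda(f,(X,C))=\varLambda(g,(Y,D))$, as desired.

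I do not expect a serious obstacle: the whole difficulty has been absorbed into Theorem~\ref{teor inv topo}, and what remains is the bookkeeping of checking that complements of subcomplexes are open, definable and invariant, so that the dictionary of Remark~\ref{relacion con relativo} applies on both sides (using that $X$ and $Y$ are finite, hence compact, so the relative numbers are defined). The only genuinely new point is the parenthetical extension to open maps $f$ with $f(C)\subset C$ and $f(X\setminus C)\subset X\setminus C$: there I would replace the appeals to Remark~\ref{relacion con relativo} and Theorem~\ref{teor inv topo} by their open-map counterparts via Remark~\ref{obs numero para abtas es inv}, once the combinatorial Lefschetz number and its invariance have been set up for open maps; the formal structure of the argument is unchanged.
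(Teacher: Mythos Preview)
Your proposal is correct and follows essentially the same route as the paper: use Remark~\ref{relacion con relativo} to identify $\varLambda(f,(X,C))=\varLambda(f,X\setminus C)_X$ and $\varLambda(g,(Y,D))=\varLambda(g,Y\setminus D)_Y$, and then apply Theorem~\ref{teor inv topo} to the homeomorphism $h$ between the open complements. Your write-up is simply a more detailed verification of the hypotheses; the paper's proof is the same two-line argument.
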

\begin{proof}
    From Remark~\ref{relacion con relativo}, we know that $\varLambda(f,(X,C))=\varLambda(f,X\setminus C)_X$ and $\varLambda(g,(Y,D))=\varLambda(g,Y\setminus D)_Y$. But, by Theorem~\ref{teor inv topo}, we have 
    \begin{equation*}
    \varLambda(f,X\setminus C)_X=\varLambda(g,Y\setminus D)_Y\;.\qedhere
    \end{equation*}
\end{proof}

Now we want to explore when the requirement that $f$ be homeomorphism is necessary to have a good definition of combinatorial Lefschetz number.

The idea of the construction of the combinatorial Lefschetz number is that, given $A\subset X$ and a map $f:X\rightarrow X$, we consider a simplicial approximation $f^{\mathrm{simp}}$ and restrict the matrices of the automorphism induced by $f^{\mathrm{simp}}$ in  simplicial chains to the faces whose interior belongs to $A$ when computing the traces (again we refer to \cite[Sec.~2]{M-M2} for a sketch of this definition). However, when $f$ is not a homeomorphism, this idea is not well defined in general.
\begin{example}\label{ej hipotesis}
    Consider $X=[0,1]$, $A=(0,1)$ and $f:X\rightarrow X$ given by $f(x)=\frac{1}{2}$. The simplicial map $f^{\mathrm{s}}$ that sends $0$ to $0$ and $1$ to $0$ is a simplicial approximation of $f$. The computation of the combinatorial Lefschetz number of $f$ in $A$ with this approximation gives $0$. However, the identity on $X$ is another simplicial approximation of $f$, and with this approximation the computation of the combinatorial Lefschetz number of $f$ in $A$ gives $-1$.
\end{example}
Nevertheless, under certain hypothesis less restrictive than homeomorphisms, we can define the combinatorial Lefschetz number. The definition is exactly analogous to that of \cite{M-M1}. That is, with the notation in \cite[Sec.~3.2]{M-M1}, given a simplicial complex $Z$, a definable subspace $D\subset Z$ and a map $l:Z\rightarrow Z$, first we take a triangulation $(X,A,f)$ of $(Z,D,l)$, and a simplicial approximation $f^{\mathrm{simp}}$ of $f$. Then we define
\begin{equation*}
    \varLambda(l,D)_Z\vcentcolon =\varLambda_c(f,A)\vcentcolon=\varLambda^c(f^{\mathrm{simp}},A)\vcentcolon=\sum_p (-1)^p \mathrm{tr}(M_p(f^{\mathrm{simp}},X)_{|A})\;,
\end{equation*}
where $\varLambda(l,D)_Z$ denotes the combinatorial Lefschetz number. We will see that this is well defined sometimes, even if $l$ is not a homeomorphism.
\begin{theorem}\label{enunciado corto buena definicion}
    The combinatorial Lefschetz number is well defined for open maps $l:Z\rightarrow Z$ of a simplicial complex and for definable subsets $D$ such that $f(D)\subset D$ and $f(Z\setminus D)\subset Z\setminus D$.
\end{theorem}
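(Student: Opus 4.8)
The content of the statement is that the integer
$\varLambda(l,D)_Z=\sum_p(-1)^p\,\mathrm{tr}\bigl(M_p(f^{\mathrm{simp}},X)_{|A}\bigr)$ does not depend on the triangulation $(X,A,f)$ of $(Z,D,l)$ nor on the simplicial approximation $f^{\mathrm{simp}}$. The plan is to mirror the proof of Theorem~\ref{teor inv topo}, arguing by induction on $\dim D$ and rewriting $\varLambda(l,D)_Z$ as a sum of quantities that are \emph{manifestly} independent of all choices: ordinary (homological) Lefschetz numbers of $l$ on suitable subcomplexes and quotient spaces, which are choice-independent by the Hopf trace formula, together with combinatorial numbers of strictly lower-dimensional invariant pieces, which are choice-independent by the inductive hypothesis. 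The observation that makes this work is that the conditions $l(D)\subset D$ and $l(Z\setminus D)\subset Z\setminus D$ are exactly those violated by the collapsing approximation in Example~\ref{ej hipotesis} (there $l(Z\setminus D)\not\subset Z\setminus D$), so the first task is to promote these two set-theoretic hypotheses to genuine invariance of all the strata appearing in the decomposition.

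First I would reduce the ambient complex from $Z$ to $\overline D$, establishing the open-map analogue of \cite[Theorem~3.9]{M-M1}; this is where openness is used to guarantee that $f^{\mathrm{simp}}$ cannot collapse a face whose interior lies in $D$, so that the matrices $M_p$ acquire the block structure needed to compute the $A$-restricted trace inside $\overline D$. Working in $\overline D$, I would then invoke the decomposition $\overline D=\mathring D\sqcup(\overline D\setminus D)\sqcup(\overline{\overline D\setminus D}\cap D)$ of \eqref{primera identidad}. Since these are \emph{disjoint} unions of open faces, restricting the diagonal sum to the faces of $D$ yields $\varLambda(l,D)_{\overline D}=\varLambda(l,\mathring D)_{\overline D}+\varLambda(l,\overline{\overline D\setminus D}\cap D)_{\overline D}$ with no hypotheses needed for this additivity. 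The complementary invariances propagate for free: $l(\overline D)\subset\overline D$ by continuity, and $l(\overline D\setminus D)=l\bigl(\overline D\cap(Z\setminus D)\bigr)\subset l(\overline D)\cap l(Z\setminus D)\subset\overline D\cap(Z\setminus D)=\overline D\setminus D$, whence $l(\overline{\overline D\setminus D})\subset\overline{\overline D\setminus D}$. Using this together with additivity and \cite[Theorem~3.9]{M-M1} I would reproduce \eqref{tercera identidad}, and, combining \eqref{primera identidad}, \eqref{segunda identidad} (the cofibration axiom of \cite{Arkowitz}, which is homological and hence valid for any $l$) and \eqref{tercera identidad}, obtain the analogue of \eqref{cuarta identidad}, namely $\varLambda(l,\mathring D)_{\overline D}=\varLambda(\tilde l,\overline D/(\overline{\overline D\setminus D}))-1$. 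The right-hand side is an honest Lefschetz number on a quotient space, so it is independent of the triangulation and of $f^{\mathrm{simp}}$.

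The remaining term $\varLambda(l,\overline{\overline D\setminus D}\cap D)_{\overline D}$ lives on a set of dimension strictly below $\dim D$, exactly as in Theorem~\ref{teor inv topo}. Writing $E=\overline{\overline D\setminus D}\cap D$, one checks $l(E)\subset l(\overline{\overline D\setminus D})\cap l(D)\subset E$, and, granting the interior invariance $l(\mathring D)\subset\mathring D$ discussed below, $l(Z\setminus E)\subset Z\setminus E$ as well (since $Z\setminus E=(Z\setminus D)\sqcup\mathring D$ is invariant); thus $E$ satisfies the hypotheses of the theorem with $\dim E<\dim D$, and the inductive hypothesis applies. Adding the two choice-independent contributions gives well-definedness of $\varLambda(l,D)_{\overline D}$, hence of $\varLambda(l,D)_Z$. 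The base case $\dim D=0$ is immediate, as then $f^{\mathrm{simp}}$ is forced on vertices and the number simply counts the vertices of $D$ fixed by $l$. Finally, independence of the triangulation follows by passing to a common definable refinement (Theorem~\ref{thm:triangulation}) and noting that every term above is either homological or of strictly lower dimension, so it is unaffected by refinement; independence of $f^{\mathrm{simp}}$ is subsumed, since each term is either homological (Hopf) or handled inductively.

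The main obstacle is the one step that genuinely needs openness: the inclusion $l(\mathring D)\subset\mathring D$, where $\mathring D$ is the interior of $D$ in $\overline D$. Unlike the complementary strata this does not follow from the set-theoretic hypotheses alone — an open map may a priori carry a frontier point of $D$ into $D$ — and the delicate point is that openness of $l$ as a self-map of $Z$ does not obviously descend to a neighbourhood argument for $\mathring D$, because the interior is taken inside $\overline D$ while openness controls neighbourhoods in $Z$. I expect the crux of the argument to be precisely this inclusion, provable either after the reduction to ambient $\overline D$ (where $\mathring D$ becomes open in the ambient, so that openness yields $l(\mathring D)$ open and contained in $D$, hence in $\mathring D$) or directly by a sequential-compactness argument in the spirit of Lemma~\ref{lema principal}, using openness to obtain local surjectivity. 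Once $l(\mathring D)\subset\mathring D$ is secured — equivalently, once the face-collapsing behaviour of Example~\ref{ej hipotesis} is excluded — all the other ingredients (additivity, the cofibration axiom, the Hopf trace formula, and \cite[Theorem~3.9]{M-M1}) are insensitive to $l$ being merely open rather than a homeomorphism, and the induction closes.
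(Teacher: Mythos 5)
Your proposal follows the paper's own proof essentially step for step: the same stratification $\overline D=\mathring D\sqcup(\overline D\setminus D)\sqcup(\overline{\overline D\setminus D}\cap D)$, the same use of additivity, Hopf's trace theorem and the cofibration axiom to express $\varLambda(l,\mathring D)_{\overline D}$ as $\varLambda(\tilde l,\overline D/(\overline{\overline D\setminus D}))-1$, the same induction on the lower-dimensional stratum $\overline{\overline D\setminus D}\cap D$, and you correctly isolate $l(\mathring D)\subset\mathring D$ as the one place where openness is genuinely needed. The ``delicate point'' you flag is settled in the paper exactly along your first route: $l(Z\setminus D)\subset Z\setminus D$ together with openness of $l$ forces $l^{-1}(\overline D)=\overline D$, which makes the restriction $l_{|\overline D}$ an open map of $\overline D$, and then the neighbourhood argument inside $\overline D$ yields $l(\mathring D)\subset\mathring D$.
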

\begin{remark}
    In addition to the hypothesis that $l$ is homeomorphism, in \cite{M-M1} $D$ had to be $f$-invariant ($f(D)=D$).
\end{remark}
In order to prove Theorem~\ref{enunciado corto buena definicion}, we need to prove that the definition of $\varLambda(l,D)_Z$ does not depend on the triangulation or on the simplicial approximations. So Theorem~\ref{enunciado corto buena definicion} can be reformulated as:

\begin{theorem}\label{teor donde demuestro la buena def para abtas}
    Let $D\subset Z$ be a definable set included in a simplicial complex $Z$ and $l:Z\rightarrow Z$ an open map that sends $D$ to $D$ and $Z\setminus D$ to $Z\setminus D$. Consider two triangulations $(X,A,f)$, $(Y,B,g)$ of $(Z,D,h)$ given by the triangulation theorem \cite{Dries}. Then, with the notation of \cite{M-M1},
    \begin{equation*}
        \varLambda_c(f,A)=\varLambda_c(g,B)\;.
    \end{equation*}
\end{theorem}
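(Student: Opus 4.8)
The plan is to mimic the inductive strategy of the proof of Theorem~\ref{teor inv topo}, but with one conceptual simplification and one conceptual addition. The simplification is that here both triangulations $(X,A,f)$ and $(Y,B,g)$ triangulate the \emph{same} data $(Z,D,l)$, so the comparison is not through an external homeomorphism $h\colon A\to B$ but through the identity of $Z$; consequently each stratum of the decomposition below is literally the same subset of $Z$ in both triangulations, and Lemma~\ref{lema principal} is replaced by this tautology. First I would reduce, using \cite[Theorem~3.9]{M-M1} and Theorem~\ref{thm:triangulation}, to the case $X=\overline A$ and $Y=\overline B$ (so that the ambient complexes are the closures of $D$ in each triangulation), and I would set up the induction on $\dim D$, the base case $\dim D=0$ being immediate since $D$ is then a finite set of points and $\varLambda_c$ only counts fixed vertices.

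The addition is the role played by the two hypotheses $l(D)\subset D$ and $l(Z\setminus D)\subset Z\setminus D$, which replace the bijectivity of $f$ used in \cite{M-M1}. I would first record that, by continuity, $f(\overline A)\subset\overline A$, so that $f$ is an honest self-map of the compact complex $\overline A$; and that, combining the two inclusions, the strata of
\[
\overline A=\mathring A\sqcup(\overline A\setminus A)\sqcup\bigl(\overline{\overline A\setminus A}\cap A\bigr)
\]
are all forward-invariant: indeed $f(\overline A\setminus A)\subset\overline A\cap(Z\setminus D)=\overline A\setminus A$, whence $f(\overline{\overline A\setminus A})\subset\overline{\overline A\setminus A}$ and $f(\overline{\overline A\setminus A}\cap A)\subset\overline{\overline A\setminus A}\cap A$. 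Definability of every stratum follows from \cite[Lemma 1.3.4]{Dries} exactly as in Theorem~\ref{teor inv topo}. This forward-invariance is precisely what Example~\ref{ej hipotesis} shows to be indispensable: there the second inclusion fails and the number becomes approximation-dependent.

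The core of the argument is then the same chain of identities as in Theorem~\ref{teor inv topo}, and I would stress that each ingredient is valid for an arbitrary continuous (here, open) self-map and not only for homeomorphisms. Additivity of the restricted trace over the three strata is automatic from the block structure of $M_p(f^{\mathrm{simp}},\overline A)$ and the linearity of the trace; the full alternating trace equals the ordinary Lefschetz number $\varLambda(f,\overline A)$ by the Hopf trace formula (a homological, hence approximation- and triangulation-independent, quantity, since $f^{\mathrm{simp}}\simeq f$); and the cofibration identity \cite[Theorem 1.1]{Arkowitz} is a statement about ordinary Lefschetz numbers. Combining these exactly as in Equations~\eqref{primera identidad}--\eqref{cuarta identidad} yields
\[
\varLambda_c(f,\mathring A)=\varLambda\bigl(\tilde f,\overline A/(\overline{\overline A\setminus A})\bigr)-1 .
\]
The right-hand side is an ordinary Lefschetz number of the map induced by $l$ on the quotient space $\overline D/(\overline{\overline D\setminus D})$, which depends only on $(Z,D,l)$; hence $\varLambda_c(f,\mathring A)=\varLambda_c(g,\mathring B)$, and this equality holds independently of both the triangulation and the chosen simplicial approximation.

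It then remains to compare the boundary strata. Since $\varLambda_c(f,A)=\varLambda_c(f,\mathring A)+\varLambda_c\bigl(f,\overline{\overline A\setminus A}\cap A\bigr)$ (again by additivity of the restricted trace) and similarly for $g$, and since $\overline{\overline A\setminus A}\cap A$ satisfies the hypotheses of the theorem inside the closed complex $\overline{\overline A\setminus A}$ (forward-invariant, with forward-invariant complement $\overline A\setminus A$), the two triangulations restrict to two triangulations of the same lower-dimensional data; by the induction hypothesis their combinatorial numbers agree. Adding the two contributions gives $\varLambda_c(f,A)=\varLambda_c(g,B)$. The point that needs the most care is the dimension drop $\dim\bigl(\overline{\overline A\setminus A}\cap A\bigr)<\dim A$ that makes the induction legitimate, together with checking that the hypotheses genuinely descend to this stratum; the remaining subtlety, that the Hopf-trace and cofibration steps survive the passage from homeomorphisms to open maps, is resolved by observing that both are purely homological and thus indifferent to injectivity of $l$.
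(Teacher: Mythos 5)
Your overall architecture matches the paper's: induction on $\dim A$, the three-fold decomposition $\overline A=\mathring A\sqcup(\overline A\setminus A)\sqcup(\overline{\overline A\setminus A}\cap A)$, formal additivity of the restricted trace, Hopf's trace theorem, the cofibration identity to express $\varLambda_c(f,\mathring A)$ as $\varLambda(\tilde f,\overline A/(\overline{\overline A\setminus A}))-1$, and the induction hypothesis for the frontier stratum. Your observation that the two triangulations are compared through $Z$ itself, so that the strata correspond tautologically, is also fine. But there is a genuine gap at the one point where the openness hypothesis does real work. You assert that ``the strata \ldots are all forward-invariant'' while only deriving forward-invariance of $\overline A\setminus A$, $\overline{\overline A\setminus A}$ and $\overline{\overline A\setminus A}\cap A$; you never establish $f(\mathring A)\subset\mathring A$, and this is \emph{not} a formal consequence of $f(A)\subset A$ and $f(X\setminus A)\subset X\setminus A$ --- a point of $\mathring A$ could a priori land in $\overline{\overline A\setminus A}\cap A$, which is still inside $A$. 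The paper devotes the central portion of its proof to exactly this: it first shows $f^{-1}(\overline A)=\overline A$ using openness of $f$, deduces that $f_{|\overline A}$ is open, and only then concludes $f(\mathring A)\subset\mathring A$ and $f(X\setminus\mathring A)\subset X\setminus\mathring A$. This inclusion is needed because the induction hypothesis is applied to the pair $(X,\overline{\overline A\setminus A}\cap A)$, whose complement in $X$ is $\mathring A\cup(X\setminus A)$; without $f(\mathring A)\subset\mathring A$ that complement need not be forward-invariant, so the hypotheses of the theorem do not descend to the lower-dimensional stratum and the induction step is not licensed.

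Your proposed workaround --- running the induction ``inside the closed complex $\overline{\overline A\setminus A}$,'' where the complement of the stratum is the forward-invariant set $\overline A\setminus A$ --- does not escape the problem. To invoke the theorem there you must check that the restriction of $f$ to $\overline{\overline A\setminus A}$ is itself an open self-map (restrictions of open maps to invariant closed subsets are not automatically open; proving it requires essentially the same $f^{-1}$-argument the paper gives for $\overline A$), and you must also know that $\varLambda_c$ of the stratum is unchanged when the ambient complex is shrunk, which for open maps is an instance of the very independence statement being proved. Relatedly, your closing diagnosis that the only subtlety in passing from homeomorphisms to open maps is that the Hopf-trace and cofibration steps are ``purely homological'' misplaces where openness enters: those steps indeed survive, but openness is consumed earlier, in securing the invariance of $\mathring A$ and of the complements of the strata --- precisely the invariances whose failure in Example~\ref{ej hipotesis} makes the number approximation-dependent.
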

\begin{proof}
    Recall that $\varLambda_c(f,A)=\varLambda^c(f^{\mathrm{simp}},A)$ and $\varLambda_c(g,B)=\varLambda^c(g^{\mathrm{simp}},B)$ for simplicial approximations $f^{\mathrm{simp}}$ and $g^{\mathrm{simp}}$ of $f$ and $g$ (one case can be $A=B$ and $f^{\mathrm{simp}}$ and $g^{\mathrm{simp}}$ different simplicial approximations of $f$, so we only have to prove the independence of the definition from the triangulation).

    We use induction on the dimensions of $A$ and $B$. If both of them have dimension $0$, then they consist of the same finite number of points and then 
    \begin{equation*}
        \varLambda^c(f^{\mathrm{simp}},A)=\varLambda(f,A)=\varLambda(g,B)=\varLambda^c(g^{\mathrm{simp}},B)\;.
    \end{equation*}

    In general, since $f(A)\subset A$, and $f(X\setminus A)\subset X\setminus A$, we have $f(\overline{A})\subset \overline{A}$ and $f(\overline{X\setminus A})\subset\overline{X\setminus A}$. As a consequence, $f(\overline{A}\setminus A)\subset \overline{A}\setminus A$, and so also $f(\overline{\overline{A}\setminus A)}\subset \overline{\overline{A}\setminus A}$. But this means that $f$ maps  $\overline{A}\setminus\mathring{A}$ to $\overline{A}\setminus\mathring{A}$ (recall that $\mathring{A}$ denotes the interior of $A$ in $\overline{A}$).

    Indeed, since $f$ is open, we have $f(\mathring{A})\subset \mathring{A}$. To see this, first note that $f^{-1}(\overline{A})=\overline{A}$. This is true because, if $f$ maps some $x\in X\setminus \overline{A}$ to $\overline{A}$, then, since $f$ is open, it maps an open neighborhood $V$ of $x$ in $X\setminus \overline{A}$ to an open neighborhood of $f(x)$ in $\overline{A}$, mapping in particular a point of $X\setminus A$ to $A$.
    
    Second, since $f^{-1}(\overline{A})=\overline{A}$, we have that $f_{\overline{A}}$ is open. This is true because, if $U$ is open in $\overline{A}$, we have $U=U'\cap \overline{A}$ with $U'$ open in $X$, and then $f_{\overline{A}}(U)=f(U')\cap \overline{A}$. So $f_{\overline{A}}(U)$ is open in $\overline{A}$.

    Now, to show $f(\mathring{A})\subset \mathring{A}$, suppose that $f$ maps some point $a\in\mathring{A}$ to $\overline{A}\setminus \mathring{A}=\overline{\overline{A}\setminus A}$ (recall that $\mathring{A}$ denotes the interior of $A$ in $\overline{A}$). Take an open neighborhood $V$ of $a$ in $\mathring{A}$. Since $f_{\overline{A}}$ is open, it maps $V$ to an open neighborhood of $f(a)\in\overline{\overline{A}\setminus A}$, and hence $f$ maps some points of $\mathring{A}$ (and, in particular, of $A$) to points that are not in $A$, which is not possible since, by hypothesis, $f(A)\subset A$.

    So $f(\mathring{A})\subset \mathring{A}$. Analogously, we have $g(\mathring{B})\subset\mathring{B}$. We have also that $f(X\setminus\mathring{A})\subset X\setminus\mathring{A}$, since, if $x\notin A$, then $f(x)\notin A \supset\mathring{A}$, and, if $x\in A\setminus\mathring{A}$, then 
    the previous argument shows that, if $f(x)\in\mathring{A}$, there is an element of $\overline{A}\setminus A\subset X\setminus A$ mapped by $f$ to $A$, which is not possible. Similarly, we obtain that  $f$ maps $\overline{A}\setminus A$, $\overline{\overline{A}\setminus A}$ and $\overline{\overline{A}\setminus A}\cap A$ to themselves, and it also maps their complements to themselves. Indeed, they are all definable (they consist of interiors, closures and complements of definable spaces). Consequently, it is possible to consider the combinatorial Lefschetz number of $f$ on these spaces (since $(f^{\mathrm{simp}})_{|\overline{A}}$ is a simplicial approximation of $f_{|\overline{A}}$, we will still write $f$ or $f^{\mathrm{simp}}$ instead of its restrictions). By the additivity that follows from the definition of $\varLambda_c$, we have
    \begin{multline*}
        \varLambda(f,\overline{A})=\varLambda^c(f^{\mathrm{simp}},\overline{A})=\\\varLambda^c(f^{\mathrm{simp}},\mathring{A})+\varLambda^c(f^{\mathrm{simp}},\overline{\overline{A}\setminus A}\cap A)+\varLambda^c(f^{\mathrm{simp}},\overline{A}\setminus A)\\=\varLambda_c(f,\mathring{A})+\varLambda_c(f,\overline{\overline{A}\setminus A}\cap A)+\varLambda_c(f,\overline{A}\setminus A)\;,
    \end{multline*}
    where the first equality follows from Hopf's trace theorem. Since, $\overline{\overline{A}\setminus A}$ is a subcomplex, the Hopf's trace theorem also implies
    \begin{multline*}
        \varLambda_c(f,\overline{\overline{A}\setminus A}\cap A)+\varLambda_c(f,\overline{A}\setminus A)\\=\varLambda^c(f^{\mathrm{simp}},\overline{\overline{A}\setminus A}\cap A)+\varLambda^c(f^{\mathrm{simp}},\overline{A}\setminus A)\\=\varLambda^c(f^{\mathrm{simp}},\overline{\overline{A}\setminus A})=\varLambda(f,\overline{\overline{A}\setminus A})\;.
    \end{multline*}
    On the other hand, due to the cofibration axiom \cite[Theorem 1.1]{Arkowitz}, we have
    \begin{equation*}
        \varLambda(f,\overline{A})=\varLambda(f,\overline{\overline{A}\setminus A})+\varLambda(\tilde{f},\overline{A}/(\overline{\overline{A}\setminus A}))-1\;,
    \end{equation*}
    where $\tilde{f}$ denotes the map induced by $f$ on the quotient.

    Combining these expressions, we obtain
    \begin{equation*}
        \varLambda_c(f,\mathring{A})=\varLambda(\tilde{f},\overline{A}/(\overline{\overline{A}\setminus A}))-1\;.
    \end{equation*}
    Analogously, we get
    \begin{equation*}
        \varLambda_c(g,\mathring{B})=\varLambda(\tilde{g},\overline{B}/(\overline{\overline{B}\setminus B}))-1\;.
    \end{equation*}
    Then, by Lemma~\ref{lema principal}, we obtain $\varLambda(\tilde{f},\overline{A}/(\overline{\overline{A}\setminus A}))=\varLambda(\tilde{g},\overline{B}/(\overline{\overline{B}\setminus B}))$, and hence $\varLambda_c(f,\mathring{A})=\varLambda_c(g,\mathring{B})$.

    Finally, from
    \begin{align*}
    &\varLambda_c(f,A)=\varLambda_c(f,\mathring{A})+\varLambda_c(f,\overline{\overline{A}\setminus A}\cap A)\;,\\
    &\varLambda_c(g,B)=\varLambda_c(g,\mathring{B})+\varLambda_c(g,\overline{\overline{B}\setminus B}\cap B)\;,
    \end{align*}
    and using the induction hyphotesis ($\mathrm{dim}\bigl(\overline{\overline{A}\setminus A}\cap A\bigr)<\mathrm{dim}(A)$), we get $\varLambda_c(f,\overline{\overline{A}\setminus A}\cap A)=\varLambda_c(g,\overline{\overline{B}\setminus B}\cap B)$, and hence
    \begin{equation*}
        \varLambda_c(f,A)=\varLambda_c(g,B)\;.\qedhere
    \end{equation*}
\end{proof}
\begin{remark}\label{obs numero para abtas es inv}
    Again, the combinatorial Lefschetz number is a topological invariant in this case. The proof is analogous to the proof of the invariance when the maps are homeomorphisms (Theorem~\ref{teor inv topo}). It is also additive in the sense of Proposition~\ref{inclusion exclusion}. The argument is analogous to the argument given in \cite{M-M1}. Note that now the integration results of \cite{M-M1} can also be generalized to open maps.
\end{remark}
\begin{remark}\label{nota amplitud def}
    Note that the new hypothesis over  the map and the definable spaces are significantly weaker than those in \cite{M-M1}. For example, on the unit disk $D^2$ in $\mathbb{C}$ centered at $0$, the map that sends $0$ to $0$ and $z$ to $|z|\frac{z^p}{|z^p|}$ is open but far from being a homeomorphism.
\end{remark}
We have the following fixed-point theorem that generalizes \cite[Theorem~4.1]{M-M1}. Using Remark~\ref{relacion con relativo}, we see that it also generalizes \cite[Theorem~3.1]{Bowszyc} when the map is open. The proof is analogous to \cite[Theorem ~4.1]{M-M1}.

\begin{theorem}\label{teor punto fijo}
    Let $X$ be a simplicial complex, $f:X\rightarrow X$ an open map and $A\subset X$ a definable subspace such that $f(A)\subset A$ and $f(X\setminus A)\subset X\setminus A$. Then, if $\varLambda(f,A)_X\neq 0$, $f$ has a fixed point in $\overline{A}$.
\end{theorem}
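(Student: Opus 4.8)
The plan is to prove the contrapositive: assuming that $f$ has no fixed point in $\overline{A}$, I would show that $\varLambda(f,A)_X=0$. The whole argument rests on the combinatorial formula $\varLambda(f,A)_X=\sum_p(-1)^p\,\mathrm{tr}(M_p(f^{\mathrm{simp}},X)_{|A})$, together with the fact, just established in Theorem~\ref{teor donde demuestro la buena def para abtas}, that the value of this expression does not depend on the chosen triangulation or simplicial approximation. This independence is exactly what makes the argument run: I am free to evaluate $\varLambda(f,A)_X$ using whatever triangulation of $(X,A,f)$ and whatever approximation of $f$ are most convenient.

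First I would exploit compactness. Since $X\subset\mathbb{R}^n$ is a finite simplicial complex it is compact, and $\overline{A}$ is closed in $X$, hence compact; the function $x\mapsto d(x,f(x))$ is then continuous and, by hypothesis, strictly positive on $\overline{A}$, so it attains a minimum $\delta>0$ there. Next I would build a good model: by the definable-triangulation theorem (Theorem~\ref{thm:triangulation}) I may take a triangulation compatible with $A$, in which $A$ is a union of open faces and $\overline{A}$ is a subcomplex; iterating barycentric subdivision I can additionally shrink the mesh below $\delta/3$, and I fix a simplicial approximation $f^{\mathrm{simp}}$ of $f$ on this subdivided complex.

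The key step—and the one I expect to be the main obstacle—is to show that, with this choice, no open face $\sigma$ with $\mathring{\sigma}\subset A$ is carried onto itself by $f^{\mathrm{simp}}$, for then every diagonal entry of $M_p(f^{\mathrm{simp}},X)_{|A}$ vanishes. A diagonal entry attached to a $p$-face $\sigma$ is nonzero only when $f^{\mathrm{simp}}$ permutes the vertices of $\sigma$, that is, when $f^{\mathrm{simp}}(\sigma)=\sigma$. Now every point of $\sigma$ lies in $\overline{A}$, where $d(x,f(x))\ge\delta$; and since $f^{\mathrm{simp}}$ is a simplicial approximation on a complex of mesh below $\delta/3$, one has $d\bigl(f(x),f^{\mathrm{simp}}(x)\bigr)<\delta/3$, so $d\bigl(x,f^{\mathrm{simp}}(x)\bigr)>2\delta/3$ for every $x\in\sigma$. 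As the diameter of $\sigma$ is also below $\delta/3$, no point of $\sigma$ can be sent back into $\sigma$, forcing $f^{\mathrm{simp}}(\sigma)\neq\sigma$. The subtlety to watch is precisely this metric bookkeeping relating the mesh, the approximation error, and the displacement $\delta$; it is the estimate underlying the classical Hopf argument, and it uses only the continuity of $f$ and the fixed-point-free hypothesis on the compact set $\overline{A}$—never that $f$ is injective—which is why nothing is lost in passing from homeomorphisms to open maps.

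Consequently all diagonal entries of $M_p(f^{\mathrm{simp}},X)_{|A}$ are zero, so $\mathrm{tr}(M_p(f^{\mathrm{simp}},X)_{|A})=0$ for every $p$, and therefore $\varLambda(f,A)_X=0$. This establishes the contrapositive and hence the theorem. I would close by noting that this is exactly the structure of the proof of \cite[Theorem~4.1]{M-M1} in the homeomorphism case; the single new ingredient is Theorem~\ref{teor donde demuestro la buena def para abtas}, which guarantees that the combinatorial Lefschetz number read off from the convenient fine triangulation coincides with the one attached to $(X,A,f)$, so that the vanishing obtained above is genuinely the vanishing of $\varLambda(f,A)_X$.
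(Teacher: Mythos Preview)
Your proposal is correct and follows exactly the route the paper indicates: it gives no detailed argument and simply says the proof ``is analogous to \cite[Theorem~4.1]{M-M1}'', which is precisely the classical Hopf displacement argument you outline, with Theorem~\ref{teor donde demuestro la buena def para abtas} supplying the well-definedness needed in the open-map setting. The only spot worth tightening is the domain/codomain bookkeeping for the simplicial approximation (one generally approximates $f$ by $f^{\mathrm{simp}}\colon X''\to X'$ with $X''$ a further subdivision, and the bound $d(f(x),f^{\mathrm{simp}}(x))<\delta/3$ comes from the mesh of the \emph{target} $X'$), but this is a routine adjustment and does not affect the substance of your argument.
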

    As is noted in \cite[Example 4.3]{M-M1}, sometimes the combinatorial Lefschetz number gives a better approach to the number of fixed points than the usual Lefschetz number. This also happens when $f$ is not a homeomorphism, but an open map.
\begin{example}\label{ejemplo mejor contar}
    Let $X$ be the annulus centered at $(0,0)\in\mathbb{R}^2$ of Figure~\ref{corona circular tesis}. The ray from $(0,0)$ and through each $x\in X$ meets the boundary of the annulus in two points. Let us denote by $p_x$ and $q_x$ these points (see Figure~\ref{corona circular tesis}). For each $x\in X$, there exists some $t_x\in[0,1]$ such that $x=t_x\cdot p_x+(1-t_x)\cdot q_x$. Let $f:X\rightarrow X$ be the open map defined by
    $$
f(x)=
\begin{cases}
(1-3t_x)\cdot p_x+3t_x\cdot q_x,\;t_x\in[0,1/3]\\
(3t_x-1)\cdot p_x+(2-3t_x)\cdot q_x,\;t_x\in [1/3,2/3]\\
(3-3t_x)\cdot p_x +(3t_x-2)\cdot q_x,\; t_x\in [2/3,1]
\end{cases}
$$
    The set of fixed points of $f$ consists of three circles. However, since $f$ is homotopic to the identity, we obtain $\varLambda(f,X)=\chi (X)=0$ and, however, even in a situation like this, where the fixed points are so obvious, the classic Lefschetz fixed-point theorem doesn't guarantee the existence of any fixed point.
    
    Now, let $X_1$ be the set consisting of $X$ without an open segment $(p_x,q_x)$ (for some $x\in X$). In this case, the additivity of the combinatorial Lefschetz number implies 
    \begin{equation}\label{ec del ejemplo}
        0=\varLambda(f,X)=\varLambda(f,X_1)_X+\varLambda(f,(p_x,q_x))_X\;.
    \end{equation}
    But, by the contractibility of the segment we have
    \begin{multline*}
    \varLambda(f,(p_x,q_x))_X+\varLambda(f,\{p_x,q_x\})_X=\\\varLambda(f,[p_x,q_x])_X=\varLambda(f_{|[p_x,q_x]},[p_x,q_x])=1\;.
    \end{multline*}
    Since $f$ has no fixed points in $\{p_x,q_x\}$, Theorem~\ref{teor punto fijo} implies $\varLambda(f,\{p_x,q_x\})_X=0$, and so $\varLambda(f,(p_x,q_x))_X=1$. Hence, Equation~\eqref{ec del ejemplo} implies $\varLambda(f,X_1)_X=-1$. Let us take now some $x'\in X\setminus [p_x,q_x]$. Let $X_2=X_1\setminus (p_{x'},q_{x'})$. The same argument as before leads to $\varLambda(f,X_2)=-2$. We can repeat this process for every $n\in \mathbb{N}$ (Figure~\ref{generalizado de corona} shows the situation for $X_4$) and, as a consequence, we see that the sequence $|\varLambda(f,X_n)_X|=|-n|=n$ gives us a better approach to the number of fixed points of $f$. \qedhere
\end{example}
\begin{figure}[htb] 
   \centering
    \includegraphics[scale=0.55]{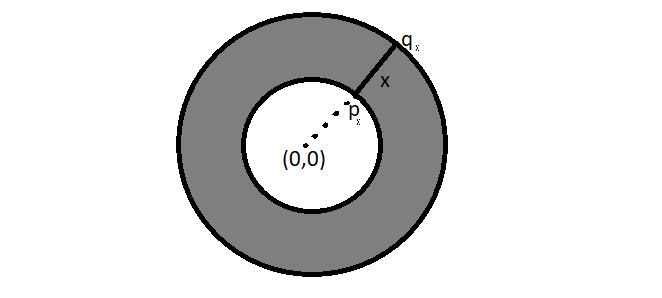} 
    \caption{Complex $X$.}
   \label{corona circular tesis}
\end{figure}
\begin{figure}[htb] 
   \centering
    \includegraphics[scale=0.55]{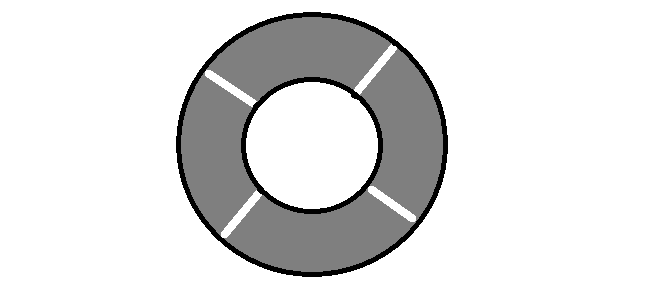} 
    \caption{$X_4$.}
   \label{generalizado de corona}
\end{figure}

As a first application of the topological invariance of the combinatorial Lefschetz number, we obtain a generalization of the classic topological-invariance result of the index by O'Neill \cite[Theorem 2.5 and Corollary 7.2]{ONeill}. In \cite[Theorem 2.9]{A-M-M}, the authors and Mosquera-Lois showed a relationship between the combinatorial Lefschetz number and the fixed-point index. Hence, if the combinatorial Lefschetz number is a topological invariant, we might expect that this invariance will also lead to a topological-invariance result of the index. This is the idea of Theorem~\ref{generalizacion oneil}.

The result in \cite[Theorem 2.5 and Corollary 7.2]{ONeill} can be stated as:
\begin{theorem}\label{teor oneil}
    Let $X$ be a simplicial complex, $f$ a self-map of $X$ and $U\subset X$ an open subspace such that $f$ has no fixed points in the boundary of $U$. If $U\cup f(U)$ is contained in some subcomplex of $X$ homeomorphic under a homeomorphism $h$ to a subcomplex of a simplicial complex $Y$ and $g:Y\rightarrow Y$ satisfies $(g\circ h)_{|U}=(h\circ f)_{|U}$, then $i(X,f,U)=i(Y,g,h(U))$.
\end{theorem}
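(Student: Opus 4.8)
The plan is to derive the equality from the commutativity property of the fixed-point index, using that subcomplexes are neighborhood retracts. Write $X_0$ for the subcomplex of $X$ with $U\cup f(U)\subset X_0$ and set $Y_0:=h(X_0)$, so that $h\colon X_0\to Y_0$ is a homeomorphism and $f=h^{-1}\circ g\circ h$ on $U$. As $X_0$ is closed we have $\overline U\subset X_0$ and $f(\overline U)\subset X_0$, and the conjugation extends to $\overline U$ by continuity, whence $g(\overline{h(U)})=h(f(\overline U))\subset Y_0$. Fix an open neighborhood $N$ of $Y_0$ in $Y$ together with a retraction $r\colon N\to Y_0$.

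First I would factor $f|_U$ through $Y$. Put $\phi:=h|_U\colon U\to Y$ and $\psi:=h^{-1}\circ r\circ g$ on the open set $V:=g^{-1}(N)$. Since $g(h(x))=h(f(x))\in Y_0$ for $x\in U$, one checks $\phi^{-1}(V)=U$ and $\psi\circ\phi=f$ on $U$ (the retraction being inert on $Y_0$); similarly $\phi\circ\psi=g$ on a neighborhood of the fixed points inside $h(U)$, where $g$ already takes values in $Y_0$. Transporting the hypothesis by $h$, the absence of fixed points of $f$ on $\partial U$ becomes the absence of fixed points of $g$ on the boundary of $h(U)$ in $Y_0$, and $h$ identifies $\mathrm{Fix}(f)\cap U$ with $\mathrm{Fix}(g)\cap h(U)$; this supplies the boundary/compactness condition needed for commutativity.

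Next I would apply commutativity to $\phi$ and $\psi$ to get $i(X,f,U)=i(X,\psi\circ\phi,U)=i(Y,\phi\circ\psi,\psi^{-1}(U))$, and then use the localization (excision) property to replace $\psi^{-1}(U)$ by a neighborhood of $\mathrm{Fix}(g)\cap h(U)$ contained in $h(U)$, on which $\phi\circ\psi$ agrees with $g$; this yields $i(Y,\phi\circ\psi,\psi^{-1}(U))=i(Y,g,h(U))$. I expect the main obstacle to be the domain bookkeeping across the two complexes: because $h(U)$ need not be open in $Y$ and $h^{-1}$ is defined only on $Y_0$, the naive factorization $f=h^{-1}\circ g\circ h$ does not live on an open set, which forces the use of the retraction $r$ and a careful verification that each index depends only on the behavior of $g$ near $\overline{h(U)}\subset Y_0$. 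In the spirit of this paper, an alternative route bypasses commutativity: by the relation between the index and the combinatorial Lefschetz number in \cite[Theorem~2.9]{A-M-M}, the absence of fixed points on $\overline U\setminus\mathring U$ gives $i(X,f,U)=\varLambda(f,U)_X$ and $i(Y,g,h(U))=\varLambda(g,h(U))_Y$, after which Theorem~\ref{teor inv topo} gives the equality directly; this is the argument that will generalize to open maps in Theorem~\ref{generalizacion oneil}.
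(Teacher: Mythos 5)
The paper offers no proof of this statement: Theorem~\ref{teor oneil} is explicitly presented as a restatement of O'Neill's classical result and is cited from \cite[Theorem 2.5 and Corollary 7.2]{ONeill}, so there is no in-paper argument to compare yours against. Judged on its own, your first route (factor $f_{|U}$ as $\psi\circ\phi$ with $\phi=h_{|U}$ and $\psi=h^{-1}\circ r\circ g$ for a retraction $r\colon N\to Y_0$, then apply commutativity of the index) is the standard way such invariance statements are proved and is sound in outline; the compactness of $\mathrm{Fix}(f)\cap U=\mathrm{Fix}(f)\cap\overline U$ needed for commutativity does follow from the no-fixed-points-on-the-boundary hypothesis. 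The step that still needs genuine work is the last one: $\phi\circ\psi=r\circ g$ agrees with $g$ only where $g$ already takes values in $Y_0$, and since $h(U)$ need not be open in $Y$ you cannot excise down to a $Y$-open set contained in $h(U)$. You need either a fixed-point-free straight-line homotopy from $r\circ g$ to $g$ on a small neighborhood of $\mathrm{Fix}(g)\cap h(U)$, or O'Neill's contraction property identifying the index computed in $Y$ with the index computed in the subcomplex $Y_0$ --- and in either case you must say what $i(Y,g,h(U))$ means when $h(U)$ is not open in $Y$.

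Your second route has a genuine gap in the context of this paper, on two counts. It is circular: Theorem~\ref{teor para probar oneil} (the identity $i(X,f,U)=\varLambda(f,U)_X$), which you invoke to convert both indices into combinatorial Lefschetz numbers, is itself proved in the paper by appealing to \cite[Corollary 7.2]{ONeill}, i.e.\ to the very statement you are trying to prove. And it proves a different theorem: that route requires $f$ and $g$ to be open maps with $f(U)\subset U$ and $f(X\setminus U)\subset X\setminus U$, hypotheses absent from Theorem~\ref{teor oneil}, whose only containment assumption is that $U\cup f(U)$ lies in a subcomplex. What the second route yields is Theorem~\ref{generalizacion oneil}, which neither contains nor is contained in the statement at hand.
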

We have the following theorem that generalizes \cite[Theorem 2.9]{A-M-M}.
\begin{theorem}\label{teor para probar oneil}
    Let $X$ be a simplicial complex, $U\subset X$ a definable open subspace and $f:X\rightarrow X$ a open map such that $f(U)\subset U$, $f(X\setminus U)\subset X\setminus U$ and $f$ has no fixed points in $\overline{U}\setminus U$. Then $\varLambda(f,U)_X=i(X,f,U)$.
\end{theorem}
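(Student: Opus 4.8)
The plan is to prove the identity by collapsing everything onto the compact subcomplex $\overline{U}$ and then matching the two sides against the ordinary Lefschetz number of $f_{|\overline U}$: the combinatorial side via additivity, and the index side via the Lefschetz--Hopf theorem together with the standard excision and contraction properties of the fixed-point index. First I would fix, using Theorem~\ref{thm:triangulation}, a triangulation compatible with $U$, so that $\overline{U}$ is a subcomplex and $\overline{U}\setminus U$ is a closed subcomplex of it. Since $U$ is open in $X$ we have $\mathring{U}=U$, and this is exactly what makes the open case far simpler than the general situation of Theorem~\ref{teor donde demuestro la buena def para abtas}: because $\overline{\overline{U}\setminus U}\subset X\setminus U$, the awkward term $\overline{\overline{U}\setminus U}\cap U$ is empty and the decomposition is just $\overline{U}=U\sqcup(\overline{U}\setminus U)$, so no induction on dimension is needed. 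By continuity $f(\overline U)\subset\overline{f(U)}\subset\overline U$, and combining this with $f(X\setminus U)\subset X\setminus U$ gives $f(\overline{U}\setminus U)\subset\overline{U}\setminus U$; by \cite[Theorem~3.9]{M-M1} I may then replace the ambient complex $X$ by $\overline{U}$, i.e. $\varLambda(f,U)_X=\varLambda(f,U)_{\overline U}$.

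Next I would show $\varLambda(f,\overline{U}\setminus U)_{\overline U}=0$. Since $\overline{U}\setminus U$ is a subcomplex carried into itself by $f$, Hopf's trace theorem (exactly as already used in the proof of Theorem~\ref{teor donde demuestro la buena def para abtas}) identifies the combinatorial number with the ordinary Lefschetz number $\varLambda(f_{|\overline{U}\setminus U},\overline{U}\setminus U)$. By the Lefschetz--Hopf theorem this equals $i(\overline{U}\setminus U,f_{|\overline{U}\setminus U},\overline{U}\setminus U)$, which vanishes because $f$ has no fixed points in $\overline{U}\setminus U$. The additivity of the combinatorial Lefschetz number (Proposition~\ref{inclusion exclusion}, extended to open maps by Remark~\ref{obs numero para abtas es inv}) then yields
\[
\varLambda(f,U)_{\overline U}=\varLambda(f,\overline U)_{\overline U}-\varLambda(f,\overline U\setminus U)_{\overline U}=\varLambda(f_{|\overline U},\overline U)\;.
\]

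It then remains to identify $\varLambda(f_{|\overline U},\overline U)$ with $i(X,f,U)$. By the Lefschetz--Hopf theorem, $\varLambda(f_{|\overline U},\overline U)=i(\overline U,f_{|\overline U},\overline U)$; since all fixed points of $f_{|\overline U}$ lie in $U$, the excision property of the index gives $i(\overline U,f_{|\overline U},\overline U)=i(\overline U,f_{|\overline U},U)$; and since $f(U)\subset U\subset\overline U$, the contraction (map-restriction) property of the index, applied with the subpolyhedron $\overline U$ containing the image $f(U)$, gives $i(\overline U,f_{|\overline U},U)=i(X,f,U)$. Chaining these equalities with the two reductions above produces $\varLambda(f,U)_X=i(X,f,U)$.

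The hard part will be the careful bookkeeping at the two places where the fixed-point index is handled for a merely open (not homeomorphic) self-map of a polyhedron: first, justifying that on the closed subcomplex $\overline{U}\setminus U$ the combinatorial number genuinely reduces to the ordinary Lefschetz number, so that Lefschetz--Hopf applies and forces the value $0$; and second, verifying that the excision and contraction properties of the index hold in this generality and that the hypotheses $f(U)\subset U$, $f(X\setminus U)\subset X\setminus U$, together with the absence of fixed points on $\overline{U}\setminus U$, are precisely what licenses their use. Everything else is the same decomposition-and-additivity argument that already appears in Theorem~\ref{teor donde demuestro la buena def para abtas}, specialized to the open subspace $U$.
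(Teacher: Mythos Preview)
Your proof is correct and follows essentially the same route as the paper: triangulate so that $\overline U$ is a subcomplex, reduce to $\overline U$, split $\overline U=U\sqcup(\overline U\setminus U)$ via additivity, kill the boundary term using the absence of fixed points there, and pass from $i(\overline U,f,U)$ to $i(X,f,U)$. The only cosmetic differences are that the paper obtains $\varLambda(f,\overline U\setminus U)_X=0$ by invoking its own Theorem~\ref{teor punto fijo} (the contrapositive of the fixed-point theorem) rather than Lefschetz--Hopf directly, and it cites O'Neill's \cite[Corollary~7.2]{ONeill} for the step you call the contraction property; both pairs of arguments are the same in content.
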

\begin{proof}
    Due to Theorem~\ref{thm:triangulation}, we can assume that $\overline{U}$ is a subcomplex of $X$. Since $f(U)\subset U$, it makes sense to consider $f_{|\overline{U}}:\overline{U}\rightarrow\overline{U}$. We get
    \begin{equation*}
        \varLambda(f,\overline{U})_X=\varLambda(f,\overline{U})_{\overline{U}}=\varLambda(f,\overline{U})=i(\overline{U},f,U)\;,
    \end{equation*}
    where the first equality follows from \cite[Remark 3.5]{M-M1} and the last one from the normalization and additivity axioms of the index \cite[Chap.~IV]{Brown}. But, using \cite[Corollary 7.2]{ONeill}, we have that $i(\overline{U},f,U)=i(X,f,U)$.
    
    Moreover, the additivity of the combinatorial Lefschetz number gives $\varLambda(f,\overline{U})_X=\varLambda(f,U)_X+\varLambda(f,\overline{U}\setminus U)_X$. But, using Theorem~\ref{teor punto fijo}, we obtain that $\varLambda(f,\overline{U}\setminus U)_X=0$ since, by hypothesis, $f$ has no fixed points in $\overline{\overline{U}\setminus U}=\overline{U}\setminus U$. So the result follows.
\end{proof}
Now we can generalize \cite[Corollary 7.2]{ONeill} when $f(U)\subset U$.
\begin{theorem}\label{generalizacion oneil}
    Let $X$ and $Y$ be simplicial complexes, $f:X\rightarrow X$ and $g:Y\rightarrow Y$ open maps, and $U\subset X$ and $V\subset Y$ open definable subspaces such that $f(U)\subset U$, $g(V)\subset V$, $f(X\setminus U)\subset X\setminus U$, $g(Y\setminus V)\subset Y\setminus V$, $f$ has no fixed points in $\overline{U}\setminus U$ and $g$ has no fixed points in $\overline{V}\setminus V$. If there is a homeomorphism $h:U\rightarrow V$ such that $g_{|V}\circ h=h\circ f_{|U}$, then $i(X,f,U)=i(Y,g,V)$.
\end{theorem}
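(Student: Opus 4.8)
The plan is to reduce the equality of the two indices to the topological invariance of the combinatorial Lefschetz number, using Theorem~\ref{teor para probar oneil} as the bridge between the index and the combinatorial number on each side. First I would check that both triples $(X,f,U)$ and $(Y,g,V)$ satisfy the hypotheses of Theorem~\ref{teor para probar oneil}: $f$ and $g$ are open, $U$ and $V$ are open definable subspaces, $f(U)\subset U$, $g(V)\subset V$, $f(X\setminus U)\subset X\setminus U$, $g(Y\setminus V)\subset Y\setminus V$, and $f$ (resp.\ $g$) has no fixed points in $\overline U\setminus U$ (resp.\ $\overline V\setminus V$). These are exactly the standing hypotheses of the present theorem, so Theorem~\ref{teor para probar oneil} applied on each side yields
\[
\varLambda(f,U)_X=i(X,f,U),\qquad \varLambda(g,V)_Y=i(Y,g,V).
\]

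Next I would invoke the topological invariance of the combinatorial Lefschetz number in the open-map setting (Remark~\ref{obs numero para abtas es inv}, the open-map analogue of Theorem~\ref{teor inv topo}). The homeomorphism $h:U\to V$ satisfies $g_{|V}\circ h=h\circ f_{|U}$, that is, it intertwines the restrictions $f_{|U}$ and $g_{|V}$; together with the invariance hypotheses $f(U)\subset U$, $f(X\setminus U)\subset X\setminus U$, $g(V)\subset V$ and $g(Y\setminus V)\subset Y\setminus V$ already assumed, this is precisely the data to which the invariance result applies. Hence $\varLambda(f,U)_X=\varLambda(g,V)_Y$.

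Chaining the three equalities gives $i(X,f,U)=\varLambda(f,U)_X=\varLambda(g,V)_Y=i(Y,g,V)$, which is the claim. The only point demanding care---and the closest thing to an obstacle---is making sure that the open-map version of the invariance (asserted in Remark~\ref{obs numero para abtas es inv} to follow by the same argument as Theorem~\ref{teor inv topo}) genuinely applies when $h$ is given only on the open sets $U$ and $V$ rather than on their closures. This is handled exactly as in the proof of Theorem~\ref{teor inv topo}: Lemma~\ref{lema principal} transfers $h$ to the boundary piece $\overline{\overline U\setminus U}\cap U$ and to the quotient $\overline U/(\overline{\overline U\setminus U})$, so that the contribution of the interior $\mathring U$ is matched on both sides and the boundary contribution is reduced by induction on dimension, never requiring $h$ to be defined on all of $\overline U$.
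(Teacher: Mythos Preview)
Your proof is correct and follows essentially the same route as the paper: apply Theorem~\ref{teor para probar oneil} on each side to convert the indices into combinatorial Lefschetz numbers, then use topological invariance to equate them. You are in fact slightly more precise than the paper in citing Remark~\ref{obs numero para abtas es inv} (the open-map version of invariance) rather than Theorem~\ref{teor inv topo}, since $f$ and $g$ are only assumed open; the closing paragraph on how Lemma~\ref{lema principal} drives that argument is extra justification but not needed for the proof itself.
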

\begin{proof}
    From Theorem~\ref{teor para probar oneil} we have that $i(X,f,U)=\varLambda(f,U)_X$ and $i(Y,g,V)=\varLambda(g,V)_Y$. But, by Theorem~\ref{teor inv topo}, we conclude that both terms are equal. 
\end{proof}
\begin{remark}
    What generalizes \cite[Corollary 7.2]{ONeill} in Theorem~\ref{generalizacion oneil} is that $U$ and $V$ need not to be contained in compact subcomplexes that are homeomorphic. It is enough that they are homeomorphic.
\end{remark}

\section{First Applications}\label{axiomas y ejemplos}

The Lefschetz number is one of the main tools one can use to detect the existence of fixed points. For this reason, it is crucial to find the most convenient way to compute it. 
In this section, we present some examples where the combinatorial Lefschetz number is a significantly better way of calculating the Lefschetz number than with homological computations or the technique described in \cite{Arkowitz} (which we summarize some lines below).

We will also show that the topological invariance of the combinatorial Lefschetz number can replace two of the axioms in \cite{Arkowitz}. More precisely, we will see that, when the map is open, the topological invariance of the combinatorial Lefschetz number implies the cofibration axiom (Theorem~\ref{inv implica cof}). Indeed, if the map is a homeomorphism, this invariance will imply the wedge-of-cicles axiom too (Idea~\ref{computo en esferas con combinatorio}).
 Besides the relation between the topological invariance of the combinatorial Lefschetz number as a new axiom and the computation of the Lefschetz number, a new axiomatization of the Lefschetz number is important by itself in fixed-point theory. From books \cite[Chap.~IV]{Brown}, \cite[VII.5]{Dold} to research papers \cite{Arkowitz,G-S,Staecker1,Staecker2}, the axiomatization of fixed-point tools has been quite important in fixed-point theory.

We begin the section by discussing the relation between the axioms in \cite{Arkowitz} and the topological invariance of the combinatorial Lefschetz number. Then, we explain how the topological invariance of the combinatorial Lefschetz number can be used to obtain lower bounds for the Nielsen number in connected sums of manifolds and, finally, we illustrate through some examples how the computation of the Lefschetz number becomes much easier using the combinatorial Lefschetz number.
\subsection{A new set of axioms}\label{subseccion axiomas}
The Lefschetz number was characterized in \cite{Arkowitz} as the only function from $\mathcal{C}$ to the integers ($\mathcal{C}$ is the family of pairs $(X,f)$ with $X$ a finite connected CW-complex and $f:X\rightarrow X$ a map) satisfying four axioms: commutativity, homotopy invariance, cofibration rule and initial value on the wedge product of circles. The main idea of the proof is that, from these axioms, we can characterize the Lefschetz number of maps on the wedge product of $n$-dimensional spheres and not only of $1$-spheres. Then, if we have a map $f:X\rightarrow X$ with $X$ a CW-complex of dimension $n$, we consider a cellular approximation $\tilde{f}$ of $f$ (homotopy axiom) and we use the cofibration axiom taking the quotient $\frac{X}{X^{(n-1)}}$, where $X^{(n-1)}$ is the $(n-1)$-skeleton of the complex. Then, the result follows by induction on the dimension, since $\frac{X}{X^{(n-1)}}$ is a wedge product of spheres.

When the map is open (in particular, if the map is a homeomorphism), the topological invariance of the combinatorial Lefschetz number will imply the cofibration axiom in the sense of the next theorem.
\begin{theorem}\label{inv implica cof}
    Let $X$ be a simplicial complex, $f:X\rightarrow X$ an open map and $A\subset X$ a subcomplex {\rm(}in particular definable{\rm)} such that $f(A)=A$ {\rm(}here $f(A)\subset A$ is not enough{\rm)} and $f(X\setminus A)\subset X\setminus A$. Then,
    \begin{equation*}
        \varLambda(f,X)=\varLambda(f_{|A},A)+\varLambda(\tilde{f},X/A)-1\;.
    \end{equation*}
\end{theorem}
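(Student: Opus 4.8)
The plan is to peel off the subcomplex $A$ from $X$ and the collapsed point from $X/A$ by additivity, and then to match the two open complements by topological invariance. Write $q\colon X\to X/A$ for the quotient map and $*=q(A)$ for the collapsed vertex. Since $A$ is a subcomplex it is closed in $X$, so as in the first equality of the proof of Theorem~\ref{teor para probar oneil} (via \cite[Remark 3.5]{M-M1}) we have $\varLambda(f,A)_X=\varLambda(f,A)_A=\varLambda(f_{|A},A)$. By the additivity of the combinatorial Lefschetz number for open maps (Proposition~\ref{inclusion exclusion} together with Remark~\ref{obs numero para abtas es inv}, applied to the partition $X=A\sqcup(X\setminus A)$), this gives $\varLambda(f,X)=\varLambda(f_{|A},A)+\varLambda(f,X\setminus A)_X$. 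Hence the theorem reduces to the single identity $\varLambda(f,X\setminus A)_X=\varLambda(\tilde f,X/A)-1$.

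Next I would analyse $X/A$. Because $f(A)=A$ we have $\tilde f(*)=*$, and because $f(X\setminus A)\subset X\setminus A$ the map $\tilde f$ preserves the open complement $(X/A)\setminus\{*\}$; moreover $q$ restricts to a homeomorphism $h\colon X\setminus A\to(X/A)\setminus\{*\}$ that intertwines $f_{|X\setminus A}$ and $\tilde f_{|(X/A)\setminus\{*\}}$, since $q\circ f=\tilde f\circ q$. Applying additivity to $X/A=\{*\}\sqcup\bigl((X/A)\setminus\{*\}\bigr)$ and noting that the single fixed vertex $*$ contributes $\varLambda(\tilde f,\{*\})_{X/A}=1$, I obtain $\varLambda(\tilde f,X/A)=1+\varLambda(\tilde f,(X/A)\setminus\{*\})_{X/A}$. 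Comparing with the reduction of the previous paragraph, it remains to prove $\varLambda(f,X\setminus A)_X=\varLambda(\tilde f,(X/A)\setminus\{*\})_{X/A}$, which I would deduce from the open-map topological invariance (Theorem~\ref{teor inv topo} and Remark~\ref{obs numero para abtas es inv}) via the homeomorphism $h$.

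For the invariance to apply---and indeed for $\varLambda(\tilde f,\cdot)_{X/A}$ to be defined through Theorem~\ref{teor donde demuestro la buena def para abtas}---I must verify that $\tilde f$ is itself an open map. This is the crux. The clean way is to establish the set identity $q^{-1}\bigl(\tilde f(W)\bigr)=f\bigl(q^{-1}(W)\bigr)$ for every $W\subset X/A$; the nontrivial inclusion forces a point of $q^{-1}(W)$ into $A$ exactly when $*\in W$, and here one uses $f(A)=A$ (to realise preimages inside $A$) together with $f(X\setminus A)\subset X\setminus A$ (to rule out that a point outside $A$ is sent into $A$). Granting this identity, openness of $\tilde f$ follows from openness of $f$ and the quotient-topology characterisation of open sets in $X/A$. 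With $\tilde f$ open and the invariance hypotheses checked above, Theorem~\ref{teor inv topo} yields $\varLambda(f,X\setminus A)_X=\varLambda(\tilde f,(X/A)\setminus\{*\})_{X/A}$, and back-substitution completes the proof.

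The main obstacle is precisely the openness of $\tilde f$: it is exactly here that the weaker hypothesis $f(A)\subset A$ breaks down (so the parenthetical warning in the statement is unavoidable), since without surjectivity of $f$ on $A$ the identity $q^{-1}\bigl(\tilde f(W)\bigr)=f\bigl(q^{-1}(W)\bigr)$ fails and $\tilde f$ need not be open, invalidating both the definedness and the invariance of the combinatorial number on $X/A$. Once openness is secured, the remainder is routine additivity together with a single application of topological invariance.
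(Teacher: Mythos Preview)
Your proof is correct and follows essentially the same route as the paper: additivity on $X=A\sqcup(X\setminus A)$ and on $X/A=\{*\}\sqcup\bigl((X/A)\setminus\{*\}\bigr)$, then topological invariance across the homeomorphism $X\setminus A\cong(X/A)\setminus\{*\}$. Your treatment is in fact more detailed on the key point---the openness of $\tilde f$ and why it forces $f(A)=A$---which the paper dispatches with ``one can check''.
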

\begin{proof}
    By the additivity of the Lefschetz number we have
    \begin{multline}\label{ec para citar1}
        \varLambda(f,X)=\varLambda(f,X)_X=\varLambda(f,A)_X+\varLambda(f,X\setminus A)_X=\\
        \varLambda(f_{|A},A)+\varLambda(f,X\setminus A)_X\;,
    \end{multline}
    where the last equality is due to \cite[Remark 3.5]{M-M1} ($A$ is closed).

    Now, since $f$ is open, $f(A)=A$ and $f(X\setminus A)\subset X\setminus A$, one can check that $\tilde{f}:X/A\rightarrow X/A$ is also open, $\tilde{f}(\{A\})=\{A\}$ ($A\in X/A$) and $\tilde{f}\bigl((X/A)\setminus\{A\}\bigr)\subset (X/A)\setminus\{A\}$.

    Again, by additivity, we have
    \begin{align*}
        &\varLambda(\tilde{f},X/A)=\varLambda(\tilde{f},X/A)_{X/A}=\\
        &\varLambda(\tilde{f},\{A\})_{X/A}+\varLambda(\tilde{f}, (X/A)\setminus\{A\})_{X/A}=
        1+\varLambda(\tilde{f},(X/A)\setminus \{A\})_{X/A}\;.
        \end{align*}
        But the topological invariance of the combinatorial Lefschetz number gives
        \begin{equation*}
            \varLambda(\tilde{f},(X/A)\setminus \{A\})_{X/A}=\varLambda(f,X\setminus A)_X\;,
        \end{equation*}
        so Equation~\eqref{ec para citar1} becomes
        \begin{equation*}
            \varLambda(f,X)=\varLambda (f_{|A}, A)+\varLambda(\tilde{f},X/A)-1\;.\qedhere
        \end{equation*}
\end{proof}

When $f$ is a homemomorphism, not just an open map, the wedge-of-circles axiom in \cite{Arkowitz} is also a consequence of the topological invariance of the combinatorial Lefschetz number. We will illustrate now the main idea of this implication.
\begin{idea}\label{computo en esferas con combinatorio}
\textbf{First case: One circle.} Given a homeomorphism $f:S^1\rightarrow S^1$, \cite[Lemma 2.7.4]{Dieck} gives a pointed map homotopic to $f$. In fact, if $f$ is a homeomorphism, this new map is also a homeomorphism. So, up to homotopy, we can assume that the homeomorphisms of the circle have a fixed point $p$. In fact, we can assume that a homeomorphism of $S^{1}$ is homotopic to the identity $\mathrm{id}$ or to $\mathrm{id}^{-1}$; that is, $\mathrm{deg}(f)=1$ or $\mathrm{deg}(f)=-1$. Here, the wedge-of-circles axiom tells us that the Lefschetz number must be $0$ or $2$. However, we can obtain the same result without this axiom. 

Suppose, for example, that $f=\mathrm{id}$ (the other case is analogous) and let $p$ be a point in $S^1$. The additivity of the combinatorial Lefschetz number gives
\begin{equation*}
    \varLambda(\mathrm{id},S^1)=\varLambda(\mathrm{id},S^{1}\setminus\{p\})_{S^1}+\varLambda(\mathrm{id},p)=\varLambda(\mathrm{id},S^{1}\setminus\{p\})_{S^1}+1\;.
\end{equation*}
Now, since $S^{1}\setminus\{p\}$ is homeomorphic to the interval $(0,1)$ (and the identity in the open interval extends to the identity in the closed interval), due to the topological invariance of the combinatorial Lefschetz number, we have $\varLambda(\mathrm{id},S^{1}\setminus\{p\})_{S^1}=\varLambda(\mathrm{id},(0,1))_{[0,1]}$. But
\begin{multline*}
    1=\varLambda(\mathrm{id},[0,1])=\varLambda(\mathrm{id},(0,1))_{[0,1]}+\varLambda(\mathrm{id},\{0,1\})_{[0,1]}=\\\varLambda(\mathrm{id},(0,1))_{[0,1]}+2
\end{multline*}
(the first equality comes from the simple connectivity of the interval).
 Finally, we get $\varLambda(\mathrm{id},S^1\setminus\{p\})_{S^1}=-1$, and then $\varLambda(\mathrm{id},S^1)=0=1-\mathrm{deg}(f)$.

\textbf{Second case: Two circles.} Imagine now that we have the wedge product $S_1^1\vee S_2^1$ of two circles. Since $f$ is a homeomorphism, it must send the common point $p$ of the two circles to itself. By the additivity of the combinatorial Lefschetz number, we obtain
\begin{multline*}
    \varLambda(f,S_1^1\vee S_2^1)=\varLambda(f,S_1^1\vee S_2^1\setminus \{p\})_{S_1^1\vee S_2^1}+\varLambda(f,\{p\})_{S_1^1\vee S_2^1}\\=\varLambda(f,S_1^1\vee S_2^1\setminus \{p\})_{S_1^1\vee S_2^1}+1\;.
\end{multline*}
Now, $S_1^1\vee S_2^1\setminus \{p\}$ is homeomorphic to the disjoint union of two open intervals $\mathring{\mathrm{I}}_1\sqcup\mathring{\mathrm{I}}_2$, where $f$ induces a map $g:\mathring{\mathrm{I}}_1\sqcup\mathring{\mathrm{I}}_2\rightarrow\mathring{\mathrm{I}}_1\sqcup\mathring{\mathrm{I}}_2$.
Then it can happen that $g$ maps $\mathring{\mathrm{I}}_0$ to $\mathring{\mathrm{I}}_1$ and $\mathring{\mathrm{I}}_1$ to $\mathring{\mathrm{I}}_0$, or $\mathring{\mathrm{I}}_0$ to $\mathring{\mathrm{I}}_0$ and $\mathring{\mathrm{I}}_1$ to $\mathring{\mathrm{I}}_1$. In the first case, $f$ must map $S^1_1\setminus\{p\}$ to $S^1_2\setminus\{p\}$ and $S^1_2\setminus\{p\}$ to $S^1_1\setminus\{p\}$ and, in the second case, it must map $S^1_1\setminus\{p\}$ to $S^1_1\setminus\{p\}$ and $S^1_2\setminus\{p\}$ to $S^1_2\setminus\{p\}$.

In the first case (note that $g$ can be extended as a homeomorphism to $\mathrm{I}_1\sqcup\mathrm{I}_2$), independently of the orientation of the $g_i$'s, we have 
\begin{equation*}
    \varLambda(g,\mathring{\mathrm{I}}_1\sqcup\mathring{\mathrm{I}}_2)_{\mathrm{I}_1\sqcup\mathrm{I}_2}=0
\end{equation*}
since the space is compact and there are no fixed points. Hence, due to the topological invariance of the combinatorial Lefschetz number, we have $\varLambda(f,S_1^1\vee S_2^1)=0+1=1-\mathrm{deg}(f_1)-\mathrm{deg}(f_2)$, where $f_i$ denotes the restriction of $f$ to $S^1_i$ composed with the projection to $S^1_i$.
In the second case, by \cite[Remark 3.5]{M-M1}, we have
\begin{equation*}
    \varLambda(f,S^1_1)_{S^1_1\vee S^1_2}=\varLambda(f_{|S^1_1},S^1_1)\;,
\end{equation*}
and we have already seen that $\varLambda(f_{|S^1_1},S^1_1)=1-\mathrm{deg}(f_1)=1-\mathrm{deg}(f_{|S^1_1})$. Similarly, we have $\varLambda(f_{|S^1_2},S^1_2)=1-\mathrm{deg}(f_2)=1-\mathrm{deg}(f_{|S^1_2})$.

Finally, since
\begin{multline*}
    \varLambda(f,S^1_1\vee S^1_2)=\\
    \varLambda(f,S^1_1\setminus\{p\})_{S^1_1\vee S^1_2}+\varLambda(f,\{p\})_{S^1_1\vee S^1_2}+\varLambda(f,S^1_2\setminus\{p\})_{S^1_1\vee S^1_2}\;,
\end{multline*}
\begin{equation*}
    \varLambda(f_{|S^1_1},S^1_1)=\varLambda(f_{|S^1_1},S^1_1\setminus\{p\})_{S^1_1}+\varLambda(f_{|S^1_1},\{p\})_{S^1_1}\;,
\end{equation*}
\begin{equation*}
    \varLambda(f_{|S^1_2},S^1_2)=\varLambda(f_{|S^1_2},S^1_2\setminus\{p\})_{S^1_2}+\varLambda(f_{|S^1_2},\{p\})_{S^1_2}\;,
\end{equation*}
and, by \cite[Remark 3.5]{M-M1},
\begin{equation*}
\varLambda(f,S^1_1\setminus\{p\})_{S^1_1\vee S^1_2}=\varLambda(f_{|S^1_1},S^1_1\setminus\{p\})_{S^1_1}\;,
\end{equation*}
\begin{equation*}
\varLambda(f,S^1_2\setminus\{p\})_{S^1_1\vee S^1_2}=\varLambda(f_{|S^1_2},S^1_2\setminus\{p\})_{S^1_2}\;,
\end{equation*}
\begin{equation*}
    \varLambda(f,\{p\})_{S^1_1\vee S^1_2}=\varLambda(f_{|S^1_1}, \{p\})_{S^1_1}=\varLambda(f_{|S^1_2}, \{p\})_{S^1_2}=1\;.
\end{equation*}
We conclude that 
\begin{equation*}
\varLambda(f,S^1_1\vee S^1_2)=1-\mathrm{deg}(f_1)-\mathrm{deg}(f_2)\;.
\end{equation*}

\textbf{In general}, if we have a wedge product of $n$ circles, we divide the study in two cases. If $f$ maps a circle to itself, we have that the Lefschetz number of the total space equals the Lefschetz number at this circle (caracterized in our first case) plus the Lefschetz number of the wedge product of the other $n-1$ circles (here we use a recurrence argument) minus one (since the common point is considered twice).

If no circle is sent to itself, as in the case of $S^1_1\vee S^1_2$, it is easy to see that the Lefschetz number is $1=1-\mathrm{deg}(f_1)+\ldots+\mathrm{deg}(f_2)$.
\end{idea}

\subsection{A bound for some Nielsen numbers on the connected sum of tori}\label{seccion cotas nielsen}
Given an suitable simplicial complex $X$ (see conditions in \cite[Theorem 5.3]{Schirmer}) and two subcomplexes $A_1$ and $A_2$ such that $X=A_1\cup A_2$, the Nielsen number of a triad (which we will write $N(f;A_1\cup A_2)$, where $f:X\rightarrow X$ is a map that preserves $A_1$ and $A_2$), defined in \cite{Schirmer}, can be seen as the minimum number of fixed points of any map homotopic to $f$ under a homotopy that preserves $A_1$ and $A_2$. It is a generalization of the relative Nielsen number and it equals the Nielsen number in the conditions of \cite[Theorem 4.16]{Heath}.

In this subsection, we will present a very illustrative and relevant application of the combinatorial Lefschetz number. In Corollary~\ref{corolario nielsen}, we give in the connected sum of two tori a lower bound for the \textit{Nielsen number of a triad}. A similar result may work on the connected sum of more tori if we can define a generalization of the Nielsen number of a triad in such a way that \cite[Theorem 4.12]{Schirmer} has an appropriate generalization.


Indeed, we wonder if similar ideas could be used to compute other different Nielsen numbers since, as we will see, the reason to choose the Nielsen number of a triad is that it satisfies a kind of additivity property. Moreover, if the Nielsen number of any other manifold has an expression in terms of the Lefschetz number, this process may be applied for the connected sum of these manifolds. 


\begin{notation}
    In this section, $\mathbb{T}^p$ will denote the $p$-torus $S^1\times\,\overset{p}{\cdots}\,\times S^1$ and $n\mathbb{T}^p$ the connected sum of $n$ $p$-tori. Given $2\mathbb{T}^p$, we can describe it as the disjoint union of a sphere $S^{p-1}$ and two $p$-tori with a $p$-disk removed. We will denote by $\mathbb{T}^p_1$ and $\mathbb{T}^p_2$ each of these two open tori. Moreover, we will denote by $\overline{\mathbb{T}^p_1}$ and $\overline{\mathbb{T}^p_2}$ the closures of $\mathbb{T}^p_1$ and $\mathbb{T}^p_2$ in $2\mathbb{T}^p$.
\end{notation}

\begin{notation}\label{notacion extension por un punto}
Let $\tilde{f}_1:\mathbb{T}^p\rightarrow \mathbb{T}^p$ and $\tilde{f}_2:\mathbb{T}^p\rightarrow \mathbb{T}^p$ be the extensions of $f_{|\mathbb{T}^p_1}$ and $f_{|\mathbb{T}^p_2}$ to their respective Alexandroff's compactifications (here $\mathbb{T}^p$ is the $p$-torus, the one-point compactification of $\mathbb{T}^p_i$, for $i=1,2$). 
\end{notation}
\begin{notation}
    Given a simplicial complex $X$, simplicial subcomplexes $A_1$, $A_2$ of $X$ such that $X=A_1\cup A_2$, and a map $f:X\rightarrow X$ such that $f(A_i)\subset A_i$ (for $i=1,2$), we will denote by $N(f;X)$ the classical Nielsen number of $f$,  by $N(f;X,A_1)$ the relative Nielsen number (see \cite{Zhao} or \cite[Sec.~3]{Schirmer}) and by $N(f;A_1\cup A_2)$ the Nielsen number of a triad defined in \cite{Schirmer}.
\end{notation}


\begin{theorem}\label{idea acotacion nielsen}
    Let $p\geq 3$ and $f:2\mathbb{T}^p\rightarrow 2\mathbb{T}^p$ be a map such that $f(\mathbb{T}^p_1)\subset\mathbb{T}^p_1$, $f(\mathbb{T}^p_2)\subset\mathbb{T}^p_2$ and $f(S^{p-1})\subset S^{p-1}$. Then,
    \begin{equation*}
       N(f;\overline{\mathbb{T}^p_1}\cup \overline{\mathbb{T}^p_2})\geq  |\varLambda(\tilde{f}_1,\mathbb{T}^p)| + |\varLambda(\tilde{f}_2,\mathbb{T}^p)|-3\;.
    \end{equation*}
\end{theorem}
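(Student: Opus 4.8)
The plan is to combine three ingredients: the additivity of Schirmer's triad Nielsen number \cite[Theorem 4.12]{Schirmer}, the classical computation of the Nielsen number on tori \cite{B-B-P-T}, and the topological invariance together with the additivity of the combinatorial Lefschetz number. I write $A_1=\overline{\mathbb{T}^p_1}$, $A_2=\overline{\mathbb{T}^p_2}$, so that $2\mathbb{T}^p=A_1\cup A_2$ and $A_0:=A_1\cap A_2=S^{p-1}$. First I would apply the additive property of the Nielsen number of a triad to obtain a lower bound of the shape
\[
N(f;A_1\cup A_2)\geq N(f_{|A_1})+N(f_{|A_2})-N(f_{|S^{p-1}})\;,
\]
reducing the problem to the two pieces and their common sphere. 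For the sphere term, note that $p\geq 3$ forces $S^{p-1}$ to be simply connected, so $f_{|S^{p-1}}$ has a single fixed-point class and hence $N(f_{|S^{p-1}})\leq 1$; this is where the hypothesis $p\geq 3$ first enters.

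The heart of the argument is the estimate $N(f_{|\overline{\mathbb{T}^p_i}})\geq |\varLambda(\tilde f_i,\mathbb{T}^p)|-1$ for $i=1,2$. The map $f_{|\mathbb{T}^p_i}$ is proper (because $f(S^{p-1})\subset S^{p-1}$ keeps the frontier invariant), so its Alexandroff extension $\tilde f_i\colon\mathbb{T}^p\to\mathbb{T}^p$ is continuous and fixes the added point $*_i$. On the torus the Nielsen number equals the absolute value of the Lefschetz number \cite{B-B-P-T}, so $\tilde f_i$ has exactly $|\varLambda(\tilde f_i,\mathbb{T}^p)|$ essential fixed-point classes. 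Exactly one of them contains $*_i$; the remaining $|\varLambda(\tilde f_i,\mathbb{T}^p)|-1$ classes lie in $\mathbb{T}^p_i$, which the collapse $q_i\colon\overline{\mathbb{T}^p_i}\to\overline{\mathbb{T}^p_i}/S^{p-1}=\mathbb{T}^p$ maps homeomorphically onto $\mathbb{T}^p\setminus\{*_i\}$. I would then transport these classes back to $\overline{\mathbb{T}^p_i}$: their fixed-point indices are local and therefore unchanged, and, crucially, since $p\geq 3$ the inclusion induces an isomorphism $\pi_1(\mathbb{T}^p_i)\cong\pi_1(\mathbb{T}^p)=\mathbb{Z}^p$ (removing a disk of codimension $p\geq 3$ does not change $\pi_1$), so the Nielsen equivalence relation is the same upstairs and downstairs and the surviving classes stay distinct and essential. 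Here the combinatorial Lefschetz number and its topological invariance (Theorem~\ref{teor inv topo}, in the open-map form of Remark~\ref{obs numero para abtas es inv}) enter to identify, through additivity, the contribution $\varLambda(\tilde f_i,\mathbb{T}^p)-1$ with the index carried by the interior piece $\mathbb{T}^p_i$, confirming that precisely one class is lost when passing from $\mathbb{T}^p$ to $\overline{\mathbb{T}^p_i}$. This yields $N(f_{|\overline{\mathbb{T}^p_i}})\geq |\varLambda(\tilde f_i,\mathbb{T}^p)|-1$.

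Substituting the three estimates into the additivity inequality gives
\[
N(f;\overline{\mathbb{T}^p_1}\cup\overline{\mathbb{T}^p_2})\geq\bigl(|\varLambda(\tilde f_1,\mathbb{T}^p)|-1\bigr)+\bigl(|\varLambda(\tilde f_2,\mathbb{T}^p)|-1\bigr)-1\;,
\]
which is the claimed bound. The main obstacle I anticipate is the fixed-point-class bookkeeping across the non-injective collapse $q_i$: one must verify that essentiality is preserved and that no two of the surviving interior classes merge with each other or with a class meeting the boundary sphere. Both follow from the $\pi_1$-isomorphism and a general-position argument valid for $p\geq 3$ (paths and their homotopies can be pushed off the single point $*_i$), but this is the step that genuinely needs the dimension hypothesis and the most care; a secondary point is to check that the configuration $(2\mathbb{T}^p;\overline{\mathbb{T}^p_1},\overline{\mathbb{T}^p_2})$ meets the hypotheses under which Schirmer's additivity holds.
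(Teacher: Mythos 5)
Your proposal reaches the stated bound and shares the paper's skeleton (Schirmer's additivity for the triad Nielsen number, a per-torus loss of at most one class, and the identity $N=|\varLambda|$ on $\mathbb{T}^p$ from \cite{B-B-P-T}), but the middle step is carried out by a genuinely different argument. The paper works with the \emph{relative} Nielsen numbers $N(f_{|\overline{\mathbb{T}^p_i}};\overline{\mathbb{T}^p_i},S^{p-1})$, invokes \cite[Theorem 6.7]{Zhao} (this is where $p\geq 3$ enters there) and the Wecken property of $\mathbb{T}^p$ to interpret both sides as minimal numbers of fixed points, and then pushes a minimal representative through the collapse $\overline{\mathbb{T}^p_i}/S^{p-1}=\mathbb{T}^p$ to get $N(\tilde f_i;\mathbb{T}^p)\leq N(f_{|\overline{\mathbb{T}^p_i}};\overline{\mathbb{T}^p_i},S^{p-1})+1$. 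You instead compare fixed-point classes directly under the quotient map: since Nielsen equivalence upstairs pushes down to Nielsen equivalence downstairs, distinct classes of $\tilde f_i$ avoiding $*_i$ lift to disjoint families of classes of $f_{|\overline{\mathbb{T}^p_i}}$, and index additivity plus the local invariance of the index (O'Neill's theorem, which the paper discusses) guarantees each such family contains an essential class; this gives $N(f_{|\overline{\mathbb{T}^p_i}})\geq N(\tilde f_i;\mathbb{T}^p)-1$ without any Wecken-type realization theorem, which is arguably more elementary. Two caveats. First, the role you assign to the combinatorial Lefschetz number is illusory: index additivity only controls the \emph{total} index of the surviving classes, so it cannot ``confirm that precisely one class is lost''; what your argument actually delivers (and all that is needed) is that \emph{at most} one essential class is lost, namely the one possibly containing $*_i$, and neither the combinatorial Lefschetz number nor the $\pi_1$-isomorphism is required for the direction of the inequality you use. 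Second, Schirmer's \cite[Theorem 4.12]{Schirmer} is stated with the relative numbers $N(f_{|A_i};A_i,S^{p-1})$ and a correction term of $1$ or $0$ depending on the case, not with the absolute numbers and $-N(f_{|S^{p-1}})$ as you wrote; since relative Nielsen numbers dominate absolute ones, your chain still closes with the $-1$ version, but you should state the additivity in its actual form and, as you yourself note, verify that $(2\mathbb{T}^p;\overline{\mathbb{T}^p_1},\overline{\mathbb{T}^p_2})$ satisfies its hypotheses.
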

\begin{proof}
    If we are in the second case of \cite[Theorem 4.12]{Schirmer}, we have
    \begin{equation*}
        N(f;\overline{\mathbb{T}^p_1}\cup \overline{\mathbb{T}^p_2})= N(f_{|\overline{\mathbb{T}^p_1}};\overline{\mathbb{T}^p_1},S^{p-1})+N(f_{|\overline{\mathbb{T}^p_2}};\overline{\mathbb{T}^p_2},S^{p-1})-1\;.
    \end{equation*}
    Indeed, if we are in the first situation of \cite[Theorem 4.12]{Schirmer} (in particular $N(f_{|S^{p-1}},S^{p-1})=0$ and hence $f_{|S^{p-1}}$ has no essential fixed-point classes), we have $N(f_{|\overline{\mathbb{T}^p_1}},f_{|S^{p-1}})=0$ and $N(f_{|\overline{\mathbb{T}^p_2}},f_{|S^{p-1}})=0$ (see \cite[Definition 2.3]{Zhao} for the definition of these two terms) and, from the definition of relative Nielsen number \cite[Definition 2.5]{Zhao}, this implies
    \begin{equation*}
        N(f_{|\overline{\mathbb{T}^p_1}};\overline{\mathbb{T}^p_1})=N(f_{|\overline{\mathbb{T}^p_1}};\overline{\mathbb{T}^p_1},S^{p-1}),\quad N(f_{|\overline{\mathbb{T}^p_2}};\overline{\mathbb{T}^p_2})=N(f_{|\overline{\mathbb{T}^p_2}};\overline{\mathbb{T}^p_2},S^{p-1})\;.
    \end{equation*}
So, in this case, we obtain 
    \begin{equation*}
        N(f;\overline{\mathbb{T}^p_1}\cup \overline{\mathbb{T}^p_2})= N(f_{|\overline{\mathbb{T}^p_1}};\overline{\mathbb{T}^p_1},S^{p-1})+N(f_{|\overline{\mathbb{T}^p_2}};\overline{\mathbb{T}^p_2},S^{p-1})\;.
    \end{equation*}
    Consequently, we will always have
 \begin{equation}\label{aditividad num nielsen}
        N(f;\overline{\mathbb{T}^p_1}\cup \overline{\mathbb{T}^p_2})\geq N(f_{|\overline{\mathbb{T}^p_1}};\overline{\mathbb{T}^p_1},S^{p-1})+N(f_{|\overline{\mathbb{T}^p_2}};\overline{\mathbb{T}^p_2},S^{p-1})-1\;.
 \end{equation}
    Since the three conditions in \cite[Theorem 6.7]{Zhao} are satisfied by $(\overline{\mathbb{T}^p_1},S^{p-1})$ and $(\overline{\mathbb{T}^p_2},S^{p-1})$ (recall that $p\geq 3$), we have that $N(f_{|\overline{\mathbb{T}^p_1}};\overline{\mathbb{T}^p_1},S^{p-1})$ (resp., $N(f_{|\overline{\mathbb{T}^p_2}};\overline{\mathbb{T}^p_2},S^{p-1})$) can be given as the minimum number of fixed points of the maps homotopic to $f_{|\overline{\mathbb{T}^p_1}}$ (resp., to $f_{|\overline{\mathbb{T}^p_2}}$) relative to $S^{p-1}$. But, in this case, since $\mathbb{T}^p=\overline{\mathbb{T}^p_1}/S^{p-1}=\overline{\mathbb{T}^p_2}/S^{p-1}$ also has the Wecken property (and so, $N(\tilde{f_i};\mathbb{T}^p)$ can be given as the minimum number of fixed points of the maps homotopic to $\tilde{f}_i$), we see that 
\begin{align*}
    &N(\tilde{f_1};\mathbb{T}^p)\leq N(f_{|\overline{\mathbb{T}^p_1}};\overline{\mathbb{T}^p_1},S^{p-1})+1,\\
    &N(\tilde{f_2};\mathbb{T}^p)\leq N(f_{|\overline{\mathbb{T}^p_2}};\overline{\mathbb{T}^p_2},S^{p-1})+1\;.
\end{align*}
(Recall the definition of $\tilde{f}_i$ in Notation~\ref{notacion extension por un punto}.)

So, from Equation~\eqref{aditividad num nielsen}, we obtain
\begin{equation*}
     N(f;\overline{\mathbb{T}^p_1}\cup \overline{\mathbb{T}^p_2})\geq N(\tilde{f_1};\mathbb{T}^p)+N(\tilde{f_2};\mathbb{T}^p)-3\;.
\end{equation*}
From \cite{B-B-P-T}, this last term equals $| \varLambda(\tilde{f}_1,\mathbb{T}^p)|+|\varLambda(\tilde{f}_2,\mathbb{T}^p)|-3$.
\end{proof}
Now the topological invariance of the combinatorial Lefschetz number allows us to improve this result.
\begin{corollary}\label{corolario nielsen}
    In the conditions of the previous theorem, if indeed $f$ $\tilde{f}_1$ and $\tilde{f}_2$ are open and $N(\tilde{f}_i)=\varLambda(\tilde{f}_i,\mathbb{T}^p)$ for both $i=1,2$, then,
    \begin{equation*}
        N(f;\overline{\mathbb{T}^p_1}\cup \overline{\mathbb{T}^p_2})\geq\varLambda(f,2\mathbb{T}^p)-\varLambda(f_{|S^{p-1}},S^{p-1})-1\;.
    \end{equation*}
    If $N(\tilde{f}_i)=-\varLambda(\tilde{f}_i,\mathbb{T}^p)$ for both $i=1,2$, then
    \begin{equation*}
        N(f;\overline{\mathbb{T}^p_1}\cup \overline{\mathbb{T}^p_2})\geq-\varLambda(f,2\mathbb{T}^p)+\varLambda(f_{|S^{p-1}},S^{p-1})-5\;.
    \end{equation*}
 \end{corollary}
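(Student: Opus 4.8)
The plan is to upgrade the bound of Theorem~\ref{idea acotacion nielsen} by converting the quantity $\varLambda(\tilde{f}_1,\mathbb{T}^p)+\varLambda(\tilde{f}_2,\mathbb{T}^p)$ into an expression involving only $\varLambda(f,2\mathbb{T}^p)$ and $\varLambda(f_{|S^{p-1}},S^{p-1})$, and then to use the hypotheses $N(\tilde{f}_i)=\pm\varLambda(\tilde{f}_i,\mathbb{T}^p)$ to strip the absolute values in that bound. The heart of the argument is an exact identity that I would derive from the additivity and the topological invariance of the combinatorial Lefschetz number for open maps.

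First I would decompose $2\mathbb{T}^p$ as the disjoint union of the three definable pieces $\mathbb{T}^p_1$, $\mathbb{T}^p_2$ and $S^{p-1}$. Since $f$ is open and sends each of these pieces and its complement into itself, the additivity of the combinatorial Lefschetz number for open maps (Remark~\ref{obs numero para abtas es inv}, Proposition~\ref{inclusion exclusion}) gives
\[
\varLambda(f,2\mathbb{T}^p)=\varLambda(f,\mathbb{T}^p_1)_{2\mathbb{T}^p}+\varLambda(f,\mathbb{T}^p_2)_{2\mathbb{T}^p}+\varLambda(f,S^{p-1})_{2\mathbb{T}^p}\;.
\]
As $S^{p-1}$ is a closed subcomplex, the last summand equals $\varLambda(f_{|S^{p-1}},S^{p-1})$ by \cite[Remark 3.5]{M-M1}.

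Next I would relate each $\varLambda(f,\mathbb{T}^p_i)_{2\mathbb{T}^p}$ to $\varLambda(\tilde{f}_i,\mathbb{T}^p)$; this is the step where topological invariance is indispensable. By \cite[Theorem 3.9]{M-M1} the ambient complex may be replaced by the closure, $\varLambda(f,\mathbb{T}^p_i)_{2\mathbb{T}^p}=\varLambda(f,\mathbb{T}^p_i)_{\overline{\mathbb{T}^p_i}}$. The collapse map $q_i\colon\overline{\mathbb{T}^p_i}\to\overline{\mathbb{T}^p_i}/S^{p-1}=\mathbb{T}^p$ restricts to a homeomorphism $\mathbb{T}^p_i\to\mathbb{T}^p\setminus\{*\}$, where $*$ is the image of $S^{p-1}$, and this homeomorphism conjugates $f_{|\mathbb{T}^p_i}$ to $\tilde{f}_i$ on $\mathbb{T}^p\setminus\{*\}$ because $\tilde{f}_i$ is the map induced on the quotient. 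Hence the topological invariance for open maps (Remark~\ref{obs numero para abtas es inv}, Theorem~\ref{teor inv topo}) yields $\varLambda(f,\mathbb{T}^p_i)_{\overline{\mathbb{T}^p_i}}=\varLambda(\tilde{f}_i,\mathbb{T}^p\setminus\{*\})_{\mathbb{T}^p}$. Applying additivity once more along $\mathbb{T}^p=(\mathbb{T}^p\setminus\{*\})\sqcup\{*\}$, and using that $\tilde{f}_i$ fixes $*$ so that $\varLambda(\tilde{f}_i,\{*\})_{\mathbb{T}^p}=1$, I would obtain $\varLambda(f,\mathbb{T}^p_i)_{2\mathbb{T}^p}=\varLambda(\tilde{f}_i,\mathbb{T}^p)-1$. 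Substituting into the additivity identity gives
\[
\varLambda(\tilde{f}_1,\mathbb{T}^p)+\varLambda(\tilde{f}_2,\mathbb{T}^p)=\varLambda(f,2\mathbb{T}^p)-\varLambda(f_{|S^{p-1}},S^{p-1})+2\;.
\]

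Finally I would combine this identity with Theorem~\ref{idea acotacion nielsen}. Since Nielsen numbers are nonnegative, the hypothesis $N(\tilde{f}_i)=\varLambda(\tilde{f}_i,\mathbb{T}^p)$ forces $\varLambda(\tilde{f}_i,\mathbb{T}^p)\geq0$, so $|\varLambda(\tilde{f}_i,\mathbb{T}^p)|=\varLambda(\tilde{f}_i,\mathbb{T}^p)$; plugging the identity into the bound $N(f;\overline{\mathbb{T}^p_1}\cup\overline{\mathbb{T}^p_2})\geq\varLambda(\tilde{f}_1,\mathbb{T}^p)+\varLambda(\tilde{f}_2,\mathbb{T}^p)-3$ yields the first inequality. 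In the opposite case $\varLambda(\tilde{f}_i,\mathbb{T}^p)\leq0$, so $|\varLambda(\tilde{f}_i,\mathbb{T}^p)|=-\varLambda(\tilde{f}_i,\mathbb{T}^p)$, and substituting the identity with the overall sign reversed (turning the constant $-3$ into $-2-3=-5$) yields the second inequality. The only delicate point is the middle step: one must verify that the collapse map conjugates the merely open map $f$ to $\tilde{f}_i$ on the open pieces, so that the invariance theorem applies even though $f$ is not a homeomorphism, and one must track exactly the single fixed point $*$ at infinity contributing $+1$; the whole gain over Theorem~\ref{idea acotacion nielsen} rests on this bookkeeping being exact.
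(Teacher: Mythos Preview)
Your proof is correct and follows essentially the same route as the paper's: additivity on the decomposition $2\mathbb{T}^p=\mathbb{T}^p_1\sqcup\mathbb{T}^p_2\sqcup S^{p-1}$, topological invariance for open maps to identify $\varLambda(f,\mathbb{T}^p_i)_{2\mathbb{T}^p}$ with $\varLambda(\tilde{f}_i,\mathbb{T}^p\setminus\{*\})_{\mathbb{T}^p}=\varLambda(\tilde{f}_i,\mathbb{T}^p)-1$, and then substitution into the bound of Theorem~\ref{idea acotacion nielsen}. The paper phrases the passage to $\mathbb{T}^p$ via the Alexandroff compactification of $\mathbb{T}^p_i$ (Notation~\ref{notacion extension por un punto}) rather than your collapse $\overline{\mathbb{T}^p_i}/S^{p-1}$, but these are the same identification and the argument is otherwise identical, including the arithmetic $+2-3=-1$ and $-2-3=-5$.
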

\begin{proof}
    We prove the first case. From Theorem~\ref{idea acotacion nielsen}, we already have
\begin{equation}\label{segunda ec nielsen}
     N(f;\overline{\mathbb{T}^p_1}\cup \overline{\mathbb{T}^p_2})\geq \varLambda(\tilde{f}_1,\mathbb{T}^p)+\varLambda(\tilde{f}_2,\mathbb{T}^p)-3\;.
\end{equation}
Let us write $\infty$ for the point at infinite in the Alexandroff's compactification of $\mathbb{T}^p_i$, that is, the torus $\mathbb{T}^p$. By the additivity of the combinatorial Lefschetz number, we have
\begin{equation*}
    \varLambda(\tilde{f}_i,\mathbb{T}^p)=\varLambda(\tilde{f}_i,\mathbb{T}^p\setminus\{\infty\})_{\mathbb{T}^p}+1
\end{equation*}
for $i=1,2$ (note that we can consider the combinatorial Lefschetz number since, by hypothesis, $\tilde{f}_i$ is open for $i=1,2$). But the topological invariance of the combinatorial Lefschetz number implies that $\varLambda(\tilde{f}_i,\mathbb{T}^p\setminus\{\infty\})_{\mathbb{T}^p}=\varLambda(f,\mathbb{T}^p_i)_{2\mathbb{T}^p}$, and, again from the additivity of the combinatorial Lefschetz number, we have
\begin{equation*}
\varLambda(f,2\mathbb{T}^p)=\varLambda(f,2\mathbb{T}^p)_{2\mathbb{T}^p}=\varLambda(f,\mathbb{T}^p_1)_{2\mathbb{T}^p}+\varLambda(f,\mathbb{T}^p_2)_{2\mathbb{T}^p}+\varLambda(f,S^{p-1})_{2\mathbb{T}^p}\;,
\end{equation*}
and so 
\begin{equation*}
    \varLambda(f,\mathbb{T}^p_1)_{2\mathbb{T}^p}+\varLambda(f,\mathbb{T}^p_2)_{2\mathbb{T}^p}=\varLambda(f,2\mathbb{T}^p)-\varLambda(f_{|S^{p-1}},S^{p-1})\;.
\end{equation*}
Thus, Equation~\eqref{segunda ec nielsen} becomes
\begin{equation*}
    N(f;\overline{\mathbb{T}^p_1}\cup \overline{\mathbb{T}^p_2})\geq\varLambda(f,2\mathbb{T}^p)-\varLambda(f_{|S^{p-1}},S^{p-1})-1\;.
\end{equation*}

The second case is similar.
\end{proof}

\begin{remark}
    In the case of the connected sum of several tori, using the Nielsen number of a triad we only have
    \begin{align*}
        &N\bigl(f;\overline{\mathbb{T}^p_1}\cup(\overline{\mathbb{T}^p_2}\cup\ldots\cup\overline{\mathbb{T}^p_k})\bigr)\geq\\
        &N(f_{|\overline{\mathbb{T}^p_1}};\overline{\mathbb{T}^p_1},S^{p-1})        +N(f_{|\overline{\mathbb{T}^p_2}\cup\ldots\cup\overline{\mathbb{T}^p_k}};\overline{\mathbb{T}^p_2}\cup\ldots\cup\overline{\mathbb{T}^p_k},S^{p-1})-1
        \geq\\
        &N(\tilde{f}_1;\mathbb{T}^p)+N(\tilde{f}_2;(k-1)\mathbb{T}^p)
        -3=
        |\varLambda(\tilde{f}_1)|+N(\tilde{f}_2;(k-1)\mathbb{T}^p)-3.
    \end{align*}
    (Here, $\tilde{f}_2$ is the extension of $f_{|\overline{\mathbb{T}^p_2}\cup\ldots\cup\overline{\mathbb{T}^p_k}}$ to its one-point compactification.)

    However, if the Nielsen number of a triad could be generalized to the union of more than two subspaces having a kind of additive property as in \cite[Theorem 4.12]{Schirmer}, we would obtain similar bounds for $N(f;\overline{\mathbb{T}^p_1}\cup \ldots \cup \overline{\mathbb{T}^p_k})$ in the connected sum of $k$ $p$-tori. 

    In \cite{Heath}, a generalization of the Nielsen number of a triad is given for a family of subspaces indexed by an index set $M$. However, in our case, the analogue in that article to \cite[Theorem 4.12]{Schirmer} (that is, \cite[Corollary 4.5]{Heath}) does not work since, among other problems, the condition of being essentially reducible is too strong. Nevertheless, we believe that, limiting the generalization of the Nielsen number of a triad to a finite ``ad'', it would be possible to obtain a good generalization of \cite[Theorem 4.12]{Schirmer} for our purpose.
\end{remark}
\subsection{Some Examples of computation}\label{subseccion de los ejemplos}

Now we present some examples that illustrate the usefulness of the combinatorial Lefschetz number when computing the Lefschetz number. For simplicity, we are computing the Lefschetz number of homeomorphisms, but in some cases it is also possible to use the same arguments for open maps. More examples will be given in Section~\ref{seccion aplicaciones practicas}.

Among the examples we present, many of them are theoretical (with no explicit homeomorphism ---although if we describe it---). However, we also present an explicit example (Example~\ref{ej cono raro explicito}). Whereas in the first examples, the only topological-invariance result that we will need is that of \cite[Remark 3.5]{M-M1}, in Examples~\ref{ej klein toro} and \ref{ej fibrado klein}, the topological invariance of the combinatorial Lefschetz number, in the sense of Theorem~\ref{teor inv topo}, becomes the main step in the simplification of the computations. In all of these cases, the computation of the Lefschetz number is much easier using the combinatorial Lefschetz number.

Note also that some of the hypotheses that we consider in the examples (for instance, to preserve $S$ in Example~\ref{ej klein toro}) are not so restrictive since the homotopy axiom of the Lefschetz number sometimes will allow us to obtain an equivalent map satisfying these hypotheses.


\begin{example}\label{ej cono raro}
    Let $Y$ be a connected simplicial complex. 
    Consider the cone $C$ over $Y$. Now we glue a cylinder $D$ to the base of $C$ by using an embedding from the union of the base and the top of $D$ to the base of $C$. Figure~\ref{figura cono raro} shows the resulting space, denoted by $X$. Finally, let $f:X\rightarrow X$ be a homeomorphism such that $f(C)=C$ and $f(D)=D$ (and hence $f$ maps the union of the top and the base of the cylinder to itself). 

    Using additivity we have
    \begin{align*}
        &\varLambda(f,X)=\varLambda(f,C)_X+\varLambda(f,X\setminus C)_X=
        \varLambda(f_{|C},C)+\varLambda(f,X\setminus C)_X=\\
        &1+\varLambda(f,X\setminus C)_X\;,
    \end{align*}
    where the last equality follows from the contractibility of $C$. Note that $X\setminus C$ is the open cylinder $D\setminus(T\cup B)$, where $T$ and $B$ denote the top and bottom of $D$. Using the definition of $f$ on $T$ and $B$, we can extend $f_{|X\setminus C}$ to the homeomorphism $f_{|D}:D\rightarrow D$. Consequently, due to the partial result of topological invariance in \cite[Remark 3.5]{M-M1}, we have $\varLambda(f,X\setminus C)_X=\varLambda(f_{|D},X\setminus C)_D$. Again, using additivity, we obtain
    \begin{equation*}
        \varLambda(f_{|D},D)=\varLambda(f_{|D},D)_D=\varLambda(f_{|D},X\setminus C)_D+\varLambda(f_{|D},T\cup B)_D\;,
    \end{equation*}
    so
    \begin{align*}
        &\varLambda(f_{|D},X\setminus C)_D=\varLambda(f_{|D},D)-\varLambda(f_{|D},T\cup B)_D=\\
        &\varLambda(f_{|D},D)-\varLambda(f_{|T\cup B},T\cup B)_{T\cup B}\;.
    \end{align*}
    Therefore
    \begin{equation*}
        \varLambda(f,X)=1+\varLambda(f_{|D},D)-\varLambda(f_{T\cup B},,T\cup B)_{T\cup B}\;.
    \end{equation*}
    But the Lefschetz number of a homeomorphism of a disjoint union of two circles is very easy to compute. Consequently, we have reduced the problem of computing the Lefschetz number of $f$ to computing the Lefschetz number of a homeomorphism of the cylinder, which is much easier.
\end{example}

\begin{figure}[htb] 
    \centering
     \includegraphics[scale=0.35]{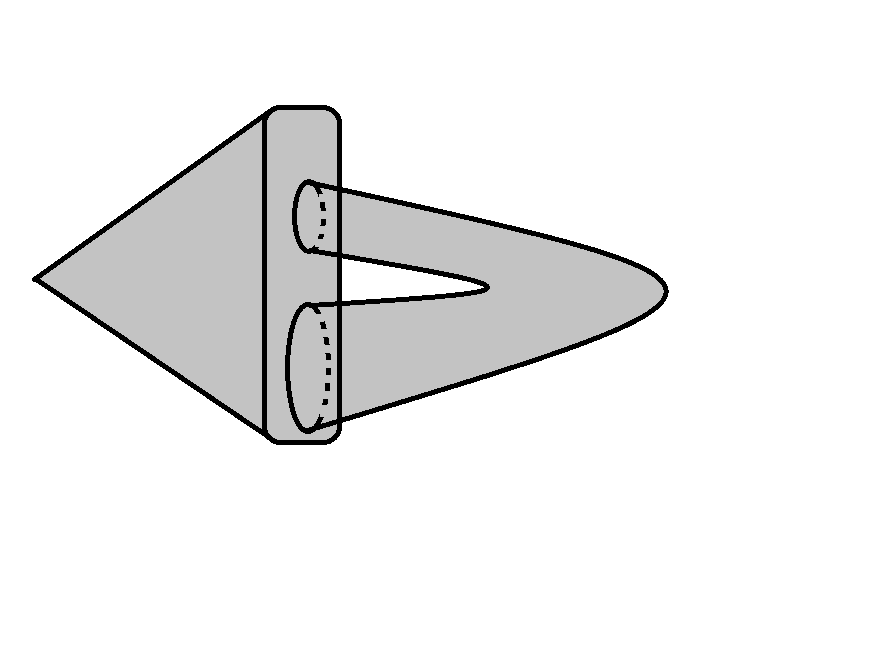} 
     \caption{Space $X$.}
    \label{figura cono raro}
\end{figure}
\begin{remark}
    Note that a similar idea can be applied if, instead of the cone, we glue the cylinder to a space $Z$ where the Lefschetz number of $f_{|Z}$ is known.
\end{remark}
\begin{remark}\label{obs ejemplo cono y cilindro}
    In Example~\ref{ej cono raro}, one could think that if we are willing to compute the Lefschetz number on the cylinder, we can use the cofibration axiom. But, in this case, we obtain
    \begin{equation*}
        \varLambda(f,X)=\varLambda(f_{|D},D)+\varLambda(\tilde{f},\frac{X}{D})-1\;,
    \end{equation*}
    where $\tilde{f}$ is the map induced by $f$ on the quotient. Now, the Lefschetz number of $\tilde{f}$ can be very difficult to compute. The other option (to take the quotient $\frac{X}{C}$) also gives us a space where the Lefschetz number is more difficult to compute than on the cylinder.
\end{remark}
We are going to illustrate Example~\ref{ej cono raro} with an explicit space and map.
\begin{example}\label{ej cono raro explicito}
    Let $Y$ be the planar complex of Figure~\ref{figura base Y}. We can consider $Y$ as a region in the hyperplane $x=0$ in $\mathbb{R}^3$ so that $p$ corresponds to $(0,0,0)$. Consider now the cone $C$ over $Y$ in $\mathbb R^3$, where the vertex of the cone is the point $c=(-1,0,0)$. Finally, we glue the top and the base of the cylinder to $C$ in such way that the resulting space $X$ is symmetric under a $180\;^{\circ}$ rotation over the edge $x$. Let $f$ denote this rotation.

    From Example~\ref{ej cono raro}, we know that 
    \begin{equation*}
        \varLambda(f,X)=1+\varLambda(f_{|D},D)-\varLambda(f_{|T\cup B},T\cup B)_{T\cup B}\;.
    \end{equation*}
    But $\varLambda(f_{|T\cup B},T\cup B)_{T\cup B}=0$ since $f$ sends $T$ to $B$ and $B$ to $T$. So, it only remains to compute $\varLambda(f_{|D},D)$.
    Now note that $f$ sends the ``middle circle'' $S$ of the cylinder (the one that is highlighted in Figure~\ref{figura cono sobre Y}) to itself. Indeed, $f_S$ is a reflection over one diameter of $S$, so $\varLambda(f_{|S},S)=2$.

    Now, consider the retraction $r:D\rightarrow S$ from the cylinder to the middle circle. Since $f_{|D}$ is homotopic to $ f_{|S}\circ r$ we have that $\varLambda(f_{|D},D)=\varLambda(i\circ f_{|S}\circ r,D)$. Indeed, the following diagram commutes:

    \[\begin{tikzcd}
	D & S \\
	D & S\;.
	\arrow["r", from=1-1, to=1-2]
	\arrow["i\circ f_{|S}\circ r"', from=1-1, to=2-1]
	\arrow["{i\circ f_{|S}}"', from=1-2, to=2-1]
	\arrow["f_{|S}"', from=1-2, to=2-2]
	\arrow["r", from=2-1, to=2-2]
\end{tikzcd}\]
    Hence, by \cite[Properties 2.4.1]{Gorniewicz}, we have that 
    \begin{equation*}
    \varLambda(f_{|D},D)=\varLambda(f_{|S},S)=2\;,
    \end{equation*}
and so $\Lambda(f,X)=3$.
\end{example}
\begin{figure}[htb] 
    \centering
     \includegraphics[scale=0.50]{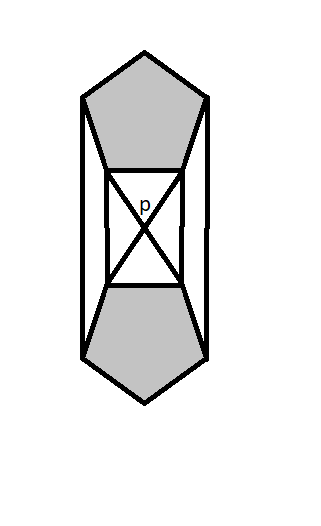} 
     \caption{Space $Y$.}
    \label{figura base Y}
\end{figure}
\begin{figure}[htb] 
    \centering
     \includegraphics[scale=0.35]{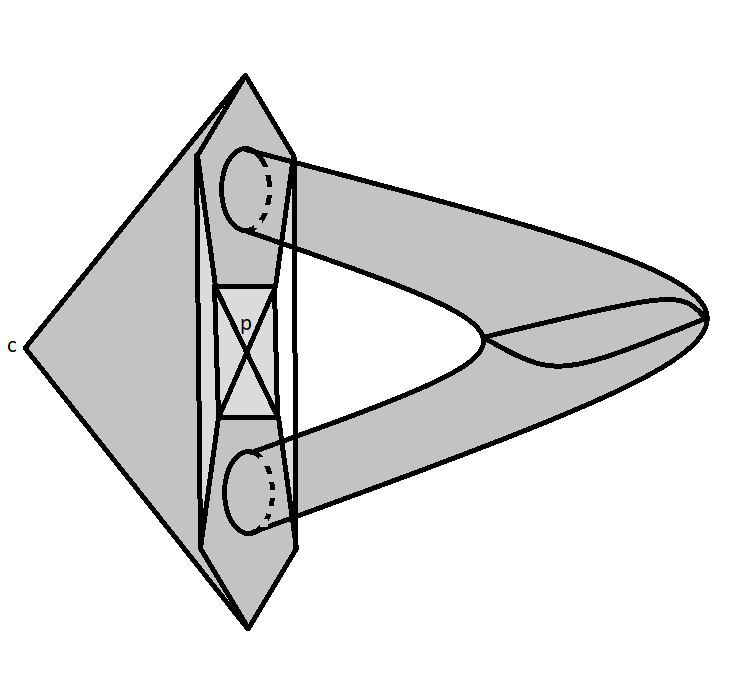} 
     \caption{Space $X$.}
    \label{figura cono sobre Y}
\end{figure}

The idea of the next example is similar to that of Example~\ref{ej cono raro}. However, we found it interesting since, even if the sphere (that is a non trivial space where the axioms of \cite{Arkowitz} work directly) appears and plays an important role, the computation of the Lefschetz number of the whole space using the axioms in \cite{Arkowitz} is not optimal. 
\begin{example}\label{ej cono esfera}
Let $C$ be a cone with base the disk $D^2$ and consider a sphere $S^2$. Let $\alpha\subset S^2$ be the subspace, represented in Figure~\ref{figura cono con esfera}, consisting of the equator and a meridian glued by two points. Let $X$ be the result of gluing $C$ and $S^2$ through an embedding from $\alpha$ to $D^2$. Finally, let $f:X\rightarrow X$ be a homeomorphism such that $f(C)=C$ and $f(S^2)=S^2$ (and hence, $f(\alpha)=\alpha$, where we are identifying $\alpha$ with its image in $X$).

If we use the cofibration axiom, we have
\begin{equation*}
    \varLambda(X,f)=\varLambda(f_{|C},C)+\varLambda(\tilde{f},\frac{X}{C})-1=\varLambda(\tilde{f},\frac{X}{C})\;,
\end{equation*}
where $\tilde{f}$ is the map induced by $f$ in the quotient. Now, $\frac{X}{C}$ is precisely $\frac{S^2}{\alpha}$, which is a wedge product of four $S^2$'s. Although we can deal with this situation without many troubles, we will see now that, using the combinatorial Lefschetz number, the computations are simpler. 

Let $p$ and $q$ be the intersection points of the equator with the meridian that form the loop $\alpha$. Since $f$ is a homeomorphism and $f(\alpha)=\alpha$, we get $f(\{p,q\})=\{p,q\}$ and $f(\alpha\setminus \{p,q\})=\alpha\setminus \{p,q\}$. Now, using additivity, the fact that the Lefschetz number of a contractible space is $1$ and \cite[Remark 3.5]{M-M1}, we have
\begin{align*}
    &\varLambda(f,X)=\varLambda(f,X)_X=\varLambda(f,C)_X+\varLambda(f,S^2\setminus \alpha)_X=\\
    &\varLambda(f,C)+\varLambda(f_{|S^2},S^2\setminus\alpha)_{S^2}=1+\varLambda(f_{|S^2},S^2)_{S^2}-\varLambda(f_{|S^2},\alpha)_{|S^2}=\\
    &1+\varLambda(f_{|S^2},S^2)-\varLambda(f_{|\alpha},\alpha)\;.
\end{align*}
So, instead of computing the Lefschetz number of a homeomorphism in a wedge product of four $S^2$'s, we must compute now the Lefschetz number of a homeomorphism of the sphere (where it is given by the degree due to \cite{Arkowitz}) and $\varLambda(f_{|\alpha},\alpha)$. But, since $f(\{p,q\})=\{p,q\}$, using additivity we have:
\begin{equation*}
    \varLambda(f_{|\alpha},\alpha)=\varLambda(f_{|\alpha},\{p,q\})+\varLambda(f_{|\alpha},\alpha\setminus\{p,q\})\;.
\end{equation*}
The first term of the right-hand side is straightforward once we know $f$. For the second term, which is homeomorphic to a disjoint union of four open intervals, we can repeat the argument of Idea~\ref{computo en esferas con combinatorio} for four circles. So we see that, using the Lefschetz combinatorial number, sometimes we also decrease the dimension of the spaces where we make the computations.

\end{example}
\begin{figure}[htb] 
    \centering
     \includegraphics[scale=0.35]{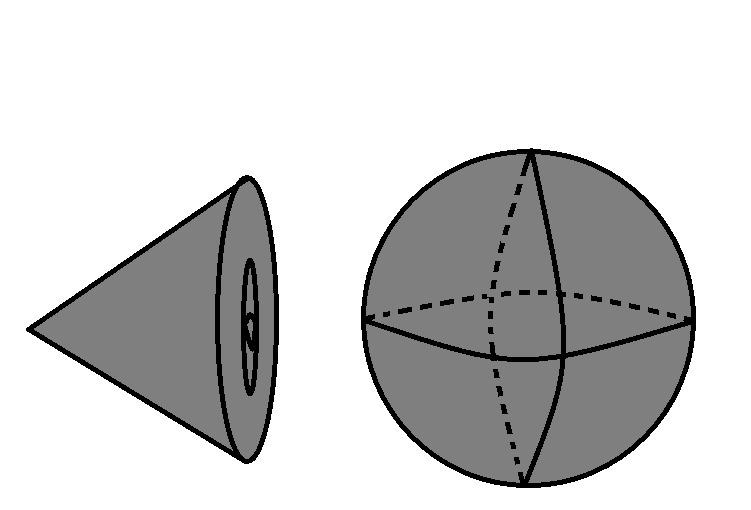} 
     \caption{The gluing of the cone with the sphere.}
    \label{figura cono con esfera}
\end{figure}

The next example shows that sometimes the computation of the Lefschetz number in the Klein bottle reduces to the computation of the Lefschetz number in the cylinder.

\begin{example}\label{ej klein toro}
    Let $K$ be the Klein bottle, considered as quotient of the square described in the first step of Figure~\ref{figura klein toro}. Imagine $f:K\rightarrow K$ is a homeomorphism such that there exists a vertical line $S$ of the square (the dotted line in Figure~\ref{figura klein toro}) such that $f(S)= S$ (note that $S$ is, under the identifications, a circle). 
    
    In this case, using additivity, we have
    \begin{equation*}
        \varLambda(f,K)=\varLambda(f,S)_K+\varLambda(f,K\setminus S)_K=
        \varLambda(f_{|S},S)+\varLambda(f,K\setminus S)_K\;.
    \end{equation*}
    But we know that the Lefschetz number of any homeomorphism of the circle must be $0$ or $2$ (it is deduced from \cite{Arkowitz} since the degree of any homeomorphism can only be $1$ or $-1$). Depending on the orientation of the map, we should choose the correct one.

    In the other hand, $K\setminus S$ is homeomorphic to an open cylinder without the top and the base (see Figure~\ref{ej klein toro}). A compactification of this space can be the compact cylinder, where it is possible to extend $f_{|K\setminus S}$ to a homeomorphism $\tilde{f}$, defining $\tilde{f}$ on the top and on the base of the cylinder as $f_{|S}$ (and sending the top to the top and the base to the base, or the top to the base and the base to the top, depending on the appearance of $f$). Using the topological invariance of the combinatorial Lefschetz number, if $C$ is such a cylinder, we would have:
    \begin{equation*}
        \varLambda(f,K\setminus S)_K=\varLambda(\tilde{f},C')_C\;,
    \end{equation*}
    where $C'$ is the cylinder without the top and the bottom. Indeed, if we write $T$ and $B$ for the top and the bottom, respectively, we have
    \begin{align*}
        &\varLambda(\tilde{f},C)=\varLambda(\tilde{f},C)_C=\varLambda(\tilde{f},C')_C+\varLambda(\tilde{f},\{T,B\})_C=\\
        &\varLambda(\tilde{f},C')_C+\varLambda(\tilde{f}_{|\{T,B\}},\{T,B\})\;.
    \end{align*} 
    But, by the definition of $\tilde{f}$, $\varLambda(\tilde{f}_{|\{T,B\}},\{T,B\})$ can be only $0$ (if $\tilde{f}$ swaps $T$ and $B$) or $2\varLambda(f_{|S},S)$, and hence we obtain
    \begin{equation*}
        \varLambda(f,K)=\varLambda(f_{|S},S)+\varLambda(\tilde{f},C)
    \end{equation*}
    or
    \begin{equation*}
        \varLambda(f,K)=\varLambda(\tilde{f},C)-\varLambda(f_{|S},S)\;.
    \end{equation*}
    So we have reduced the calculation of the Lefschetz number in the Klein bottle to the calculation in the cylinder (and in the circle $S$).
\end{example}

\begin{figure}[htb] 
    \centering
     \includegraphics[scale=0.35]{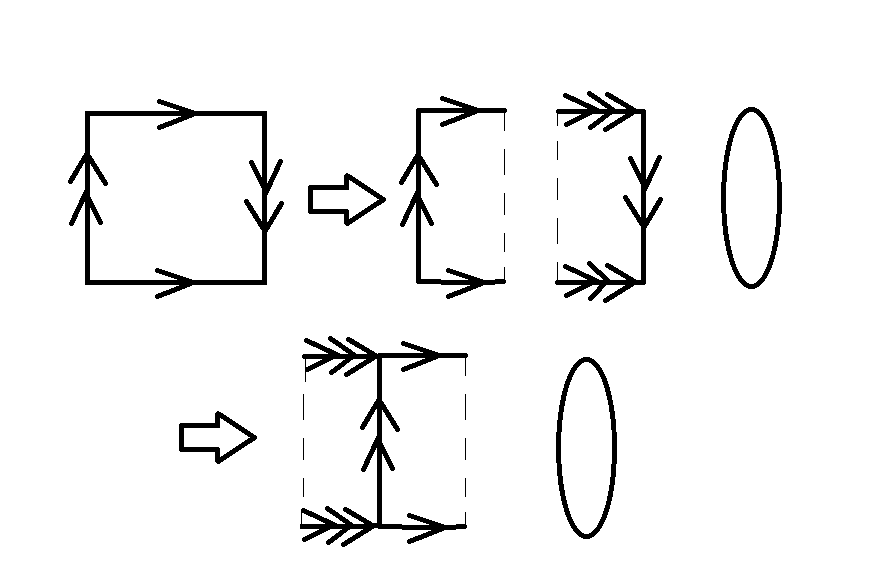} 
     \caption{Example~\ref{ej klein toro}.}
    \label{figura klein toro}
\end{figure}

Another example involving the Klein bottle $K$ is the following.

\begin{example}\label{ej fibrado klein}
    Consider the standard fiber bundle $S^1\rightarrow K\rightarrow S^1$. Let $C$ be the cone of the circle $S^1$ and let $X$ be the result of gluing $C$ and $K$ identifying the base of the cone with a section $s$ of the bundle. Let $f:X\rightarrow X$ be a homeomorphism such that $f(C)= C$ and $f(K)= K$  (and hence $f(s)=s$, since $s\equiv C\cap K$). Then we have
    \begin{align}\label{ec par ref penultimo ej}
        &\varLambda(f,X)=\varLambda(f,C)_X+\varLambda(f,K\setminus s)_X=\\
        &\varLambda(f_{|C},C)+\varLambda(f,K\setminus s)_X=1+\varLambda(f,K\setminus s)_X\;.\notag
    \end{align}
    But $K\setminus s$ is homeomorphic to an open Möbius strip $M'$. If, $f_{|K\setminus s}$ is good enough, we can extend it to a homeomorphism $\tilde{f}$ on the closed Moesbius strip $M$ and then the topological invariance of the combinatorial Lefschetz number will lead to 
    \begin{equation*}
        \varLambda(f,K\setminus s)_X=\varLambda(\tilde{f},M')_M\;.   
    \end{equation*}
    But, from the additivity of the combinatorial Lefschetz number we have
    \begin{equation*}
        \varLambda(\tilde{f},M')_M=\varLambda(\tilde{f},M)-\varLambda(\tilde{f},S)_M=\varLambda(\tilde{f},M)-\varLambda(\tilde{f}_{|S},S)\;,
    \end{equation*}
    where $S$ is the circle corresponding to the boundary of $M'$ in $M$. So we have reduced the problem of computing the Lefschetz number in a Klein bottle to a computation in a Möbius strip.

    If $f_{|K\setminus s}$ is not good enough, we can extend it to a homeomorphism $\tilde{f}$ of the one-point compactification of $K\setminus s$, that is, to the projective space. In this case, if $p$ is the point of the compactification, the topological invariance of the topological Lefschetz number implies $\varLambda(f,K\setminus s)_X=\varLambda(\tilde{f},\mathbb{R}P^2\setminus \{p\})_{\mathbb{R}P^2}$. But, by the additivity, we have
    
    \begin{equation*}
        \varLambda(\tilde{f},\mathbb{R}P^2\setminus \{p\})_{\mathbb{R}P^2}=\varLambda(\tilde{f},\mathbb{R}P^2)-\varLambda(\tilde{f},\{p\})_{\mathbb{R}P^2}=\varLambda(\tilde{f},\mathbb{R}P^2)-1\;,
    \end{equation*}
    and then, Equation~\eqref{ec par ref penultimo ej} becomes $\varLambda(f,X)=\varLambda(\tilde{f},\mathbb{R}P^2)$.
   
    Note however that the last equality is nothing new since $X$ is homotopic to $\mathbb{R}P^2$.
    
    In conclusion, if $f$ is good enough, we reduce the problem of computing the Lefschetz number on $X$ to a computation of the Lefschetz number on a Möbius strip and on a circle.
\end{example}
\begin{remark}
   With the notation of the previous example, given a map $f:X\rightarrow X$, we can use that the inclusions $K\rightarrow X$ and $C_0\rightarrow X$ ($C_0$ is the upper half of the cone $C$
   ) are cofibrations \cite[Sec.~5.3.1]{Dieck}. This may help to find a homeomorphism $f'$ homotopic to $f$ satisfying the hypothesis needed in Example~\ref{ej fibrado klein}.
\end{remark}
\begin{remark}
    Note that the combinatorial Lefschetz number can also be used as follows. Since most of the Lefschetz fixed-point theory is made for finite simplicial complexes, and hence the Lefschetz numbers can be obtained using computational methods, it is also relevant to improve these methods. In this direction, sometimes the combinatorial Lefschetz number will allow to discard most of the values outside a very small finite set of integers. A naive, but illustrative example of this is when, in the first step of Idea~\ref{computo en esferas con combinatorio}, we restrict the possible values of the Lefschetz number of a map in the circle to $0$ or $2$ (this result was already known, but we want to highlight in this remark the process how the combinatorial Lefschetz numbers allows to discard many options). 
    
    Note that the possibility of limiting the potential values of the Lefschetz number to a small set of integers can become very useful since, most of the times, the only thing we need to know is whether the Lefschetz number is different from zero. 
\end{remark}
\section{Fixed-Point Theorems in unbounded spaces}\label{seccion aplicaciones practicas}

Having a fixed-point theorem in non-compact spaces is one of the most important challenges of the topological fixed-point theory \cite[Problem 3]{Brown2}. From \cite{Leray}, where Leray introduced some ideas that can generalize the Lefschetz fixed-point theorem to non-compact spaces in some cases, several works have established new results on spaces that are not compact or even not bounded. Some examples of this are \cite{Browder2,E-F,Granas,Nussbaum1,Nussbaum2,Nussbaum3,Tromba}. Also, in the context of infinite dimensional Bannach spaces, we can mention \cite{Browder1}. Nevertheless, during the XXI century, as far as we know there are hardly any works in this line \cite{Cauty2,Cauty,Hochs}. However, except for \cite{Hochs} (where the study is restricted to isometries), in all the other articles that have appeared after \cite{Leray}, some compactness hypothesis is always needed. In this way, all of these works require the maps to have a certain compactness (to be compact, CAC, condensing,\dots). A good introduction to these results is \cite{Gorniewicz}. 

In our case, the second author and David Mosquera-Lois obtained in \cite{M-M1} a first fixed-point theorem for non-compact spaces based on the combinatorial Lefschetz number. Theorem~\ref{teor punto fijo} is a generalization of this theorem. However, in \cite{M-M1} and in Theorem~\ref{teor punto fijo}, the spaces, even if they can be non-compact, need to be bounded.

Now, we will present some new results where the combinatorial Lefschetz number 
and some bounds of the fixed-point index allow us to have new fixed-point theorems for certain open maps (for simplicity, since the idea will be the same in both cases, we are going to state the theorems in terms of homeomorphisms, but, usually, it will be possible to extend the results to open maps). In addition to establishing a new method for detecting fixed points in unbounded spaces, this can be seen as one of the first methods where compactness is almost not needed (the only condition is to require that the maps have an extension to any compactification in $\mathbb{R}^n$, for which requiring them to be proper is enough). Indeed, if the map is proper, 
it will be always possible to extend it to the Alexandroff's compactification. This is not a restriction since, moreover, the results will be stated in terms of the combinatorial Lefschetz number of the original unbounded space and this number is independent of the chosen compactification (Remark~\ref{obs numero para abtas es inv}). Note also that the restriction to proper maps or even to homeomorphisms is something usual in fixed-point theory (see, for example, \cite{Kelly 2,Lecalvez acotacion,Lecalvez1,Lecalvez2,LC-Y,Le Roux,Le Roux2}). Moreover, even if Idea~\ref{idea teorema final} can also be used with spaces of dimension greater than $2$ when the fixed-point index is bounded, the restriction to surface-like spaces in the new theorems of this section, is usual in fixed-point theory (the same examples above prove it).

Finally, in Section~\ref{subseccion nuevas acotaciones}, we will focus on the relevance of finding new bounds for the fixed-point index. Theorem~\ref{pseudoteorema} will emphasize the power of a conjecture like Conjecture~\ref{conjetura} if we could show that it is true. Indeed, we will also see, always due to Idea~\ref{idea teorema final}, that the combinatorial Lefschetz number can be used to find counterexamples of spaces where the fixed-point index is not bounded by a given quantity. 
We must say that the search for bounds for the fixed-point index is especially interesting in mathematics as, for example, \cite{Lecalvez acotacion,Le Roux} show. Several results have been obtained in this line. In addition to \cite{Lecalvez acotacion}, we can mention \cite{G-K,G-K2,Jiang acotacion,J-W-Z,Kelly,Kelly3,Z-Z}.
\subsection{Some new fixed-point theorems on unbounded spaces}
Let us start now by defining the combinatorial Lefschetz number for unbounded spaces, which indeed, by itself, is something quite interesting. This definition will always exist for closed definable subspaces of $\mathbb{R}^n$ or for spaces that are homeomorphic to such spaces, as we mention in Remark~\ref{obs vale para homeomorfos a definibles}. However, even if this family is much larger than the class of closed definable spaces, for the sake of simplicity, we will continue to state them in terms of definable spaces and not of spaces homeomorphic to a definable spaces.

\begin{definition}\label{def numero comb en no acotados}
    Let $U\subset \mathbb{R}^n$ be a definable space and let $f:U\rightarrow U$ be a continuous map. If there exists a definable compactification $X$ of $U$ (this means that $X$ and the inclusion of $U$ in $X$ are definable) and an open extension $f':X\rightarrow X$ of $f$ such that $f'(X\setminus U)\subset X\setminus U$, then we define the \textit{combinatorial Lefschetz number} of $f$ in $U$ as
    \begin{equation*}
        \varLambda_{\mathrm{comb}}(f,U)=\varLambda(f',U)_X\;.
    \end{equation*}
    Remark~\ref{obs numero para abtas es inv} implies that this number is well-defined.
\end{definition}

\begin{remark}
        This new combinatorial number is a generalization of that in \cite{M-M1} since, if $U\subset X$ is a definable subspace of a simplicial complex $X$ and $f:X\rightarrow X$ is an open map such that $f(U)\subset U$ and $f(X\setminus U)\subset X\setminus U$, we have
    \begin{equation*}
        \varLambda(f,U)_X=\varLambda(f,U)_{\overline{U}}=\varLambda_{\mathrm{comb}}(f_{|U},U)\;.
    \end{equation*}
\end{remark}

\begin{remark}\label{la extension de homeos de cerrados existe}
Now, if $U$ is definable and closed in $\mathbb{R}^n$, and it is also unbounded, the Alexandroff's compactification of $U$ can be viewed as the composition of the inclusion of $U$ in $\mathbb{R}^n$ with the compactification $c:\mathbb{R}^n\rightarrow S^n$, given by the inverse of the stereographic projection. Now, the inclusion is always definable and, by \cite{Shaviv}, $c$ is also definable. Since the composition of definable maps is also definable \cite[Lemma 1.2.3]{Dries}, the Alexandroff's compactification of a closed definable unbounded space of $\mathbb{R}^n$ is also definable. During the rest of the paper, $U^{\infty}$ will denote the Alexandroff's compactification of $U$. Indeed, if $f:U\rightarrow U$ is a homeomorphism, the map $\tilde{f}:U^{\infty}\rightarrow U^{\infty}$, defined as $f$ in $U$ and as $\tilde{f}(\infty)=\infty$, is also a homeomorphism (we will also keep this notation during the rest of the paper). Consequently, the combinatorial Lefschetz number can always be defined for the homeomorphisms of a definable space closed in $\mathbb{R}^n$. 
\end{remark}
\begin{remark}
    In general, if the map $f:U\rightarrow U$ is proper, it is possible to extend it to a map $\tilde{f}:U^{\infty}\rightarrow U^{\infty}$, but we must be careful since we need this extension to be an open map. So, even if the theorems of this section can be applied to some open maps too, in order to simplify the statements, we will work with homeomorphisms. Nevertheless, Idea~\ref{idea teorema final} has a straightforward generalization to open maps.
\end{remark}
From Theorem~\ref{teor punto fijo}, we obtain the following direct but relevant consequence about the fixed points of map extensions.
\begin{theorem}\label{teor pto fijo de la extension}
 Let $U$ be a definable and closed subspace of $\mathbb{R}^n$, and $f:U\rightarrow U$ a continuous map without fixed points and such that $\varLambda_{\mathrm{comb}}(f,U)\neq 0$. Then, every open map $f':X\rightarrow X$ that extends $f$ to a definable compactification $X$ of $U$ with $f(X\setminus U)\subset X\setminus U$ must have a fixed point in $X\setminus U$.
\end{theorem}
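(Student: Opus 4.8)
The plan is to localize the bounded fixed-point theorem (Theorem~\ref{teor punto fijo}) to the concrete compactification $X$ and then to show that the fixed point it produces cannot lie inside $U$. First I would triangulate the triple consisting of $X$, the subspace $U$, and the map $f'$ by means of the definable-triangulation theorem (Theorem~\ref{thm:triangulation}), so that $X$ becomes a finite simplicial complex and $U$ a definable subspace; this places us exactly within the hypotheses of Theorem~\ref{teor punto fijo}.

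Next I would use Definition~\ref{def numero comb en no acotados} together with the well-definedness guaranteed by Remark~\ref{obs numero para abtas es inv}: the combinatorial Lefschetz number attached to this particular compactification and extension coincides with the intrinsic invariant of $(f,U)$, that is, $\varLambda(f',U)_X=\varLambda_{\mathrm{comb}}(f,U)\neq 0$. This is the step that transports the nonvanishing hypothesis, stated for the abstract number $\varLambda_{\mathrm{comb}}(f,U)$, onto the concrete quantity $\varLambda(f',U)_X$ to which Theorem~\ref{teor punto fijo} applies.

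I would then check the remaining hypotheses of Theorem~\ref{teor punto fijo} with $A=U$: the inclusion $f'(U)\subset U$ holds because $f'$ restricts to $f\colon U\to U$, while $f'(X\setminus U)\subset X\setminus U$ is precisely the condition imposed on the extension. Since $\varLambda(f',U)_X\neq 0$, Theorem~\ref{teor punto fijo} yields a fixed point $x$ of $f'$ in the closure $\overline{U}$ of $U$ in $X$. Finally I would localize this point: if $x\in U$ then $f'(x)=f(x)=x$ would contradict the assumption that $f$ has no fixed points in $U$, so $x$ must lie in $\overline{U}\setminus U\subset X\setminus U$, which is the asserted fixed point of $f'$ in $X\setminus U$.

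Since the statement is flagged as a direct consequence of Theorem~\ref{teor punto fijo}, there is no genuine analytic obstacle here; the only point requiring care is the invocation of Remark~\ref{obs numero para abtas es inv}, which ensures that the hypothesis on $\varLambda_{\mathrm{comb}}(f,U)$ is independent of the chosen compactification and therefore may be read as a hypothesis on $\varLambda(f',U)_X$ for the given $X$ and $f'$.
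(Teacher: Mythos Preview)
Your proof is correct and follows exactly the route the paper intends: since the paper merely states that the result is a direct consequence of Theorem~\ref{teor punto fijo}, your argument---identifying $\varLambda(f',U)_X$ with $\varLambda_{\mathrm{comb}}(f,U)$ via Definition~\ref{def numero comb en no acotados} and Remark~\ref{obs numero para abtas es inv}, applying Theorem~\ref{teor punto fijo} to obtain a fixed point in $\overline{U}=X$, and then excluding $U$ because $f$ is fixed-point free there---is precisely the intended derivation. The only cosmetic remark is that, since $X$ is a compactification of $U$, the closure $\overline{U}$ equals $X$, so $\overline{U}\setminus U = X\setminus U$ rather than merely being contained in it.
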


In this section, the main tools we are going to use are the additivity and the topological invariance of the combinatorial Lefschetz number, and the bounds of the fixed-point index. Both the index of an isolated fixed point and the index of a fixed-point class admit sometimes some bounds. Among all the results providing bounds for the index, we will use the four below. 

When $x$ is an isolated fixed point of $f$, $\mathrm{ind}(x)$ will mean the index $i(X,f,U)$, where $U$ is an open subspace of $X$ such that $x\in U$ is the only fixed point of $f$ in $\overline{U}$ (the additivity axiom of the index \cite[Sec.~IV.A]{Brown} implies that this index is independent of the chosen $U$ satisfying this condition).

The next theorem appears in \cite{Jiang acotacion}. A similar result is in \cite{Lecalvez acotacion}.
\begin{theorem}\label{acotacion superficies}
    Let $X$ be a compact and connected surface of negative Euler characteristic and let $f:X\rightarrow X$ be a continuous map. In this case, the index of all fixed-point classes $F$ of $f$ is less or equal to $1$, and we have 
        \begin{equation*}
            \sum_{\mathrm{ind}(f,F)<-1}(\mathrm{ind}(f,F)+1)\geq2\chi(X)\;.
        \end{equation*}
    In particular we have
    \begin{equation*}
        |\varLambda(f,X)-\chi(X)|\leq N(f)-\chi(X)\;.
    \end{equation*}
\end{theorem}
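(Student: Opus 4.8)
The plan is to separate the statement into two layers. The first two displayed assertions --- that every fixed-point class of $f$ has index at most $1$, and that $\sum_{\mathrm{ind}(f,F)<-1}(\mathrm{ind}(f,F)+1)\geq 2\chi(X)$ --- are the genuinely hard, analytic content: they are the surface-dynamics results behind \cite{Jiang acotacion} (and the companion estimates of \cite{Lecalvez acotacion}), proved by means of Nielsen--Thurston canonical forms together with index estimates for fixed-point classes on surfaces of negative Euler characteristic. I would cite these two facts verbatim and concentrate on showing that the concluding inequality $|\varLambda(f,X)-\chi(X)|\leq N(f)-\chi(X)$ is a purely arithmetic consequence of them.

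For the elementary part, I would start from the Lefschetz--Hopf decomposition $\varLambda(f,X)=\sum_F\mathrm{ind}(f,F)$, the sum ranging over all fixed-point classes $F$ of $f$, together with the fact that $N(f)$ counts exactly the \emph{essential} classes, namely those with $\mathrm{ind}(f,F)\neq 0$. Since indices are integers and, by the first assertion, bounded above by $1$, every class of positive index has index exactly $1$. I would therefore partition the essential classes into three groups: let $a$ be the number with index $1$, let $b$ be the number with index $-1$, and let $c$ be the number with index strictly less than $-1$, so that $N(f)=a+b+c$.

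With this notation the two halves of the absolute value fall out separately. For the upper estimate, the contributions of index $-1$ and of index $<-1$ are all negative, so $\varLambda(f,X)=a-b+\sum_{\mathrm{ind}(f,F)<-1}\mathrm{ind}(f,F)\leq a\leq N(f)$, which is exactly $\varLambda(f,X)-\chi(X)\leq N(f)-\chi(X)$. For the lower estimate I would rewrite the deep inequality as $\sum_{\mathrm{ind}(f,F)<-1}\mathrm{ind}(f,F)=\sum_{\mathrm{ind}(f,F)<-1}(\mathrm{ind}(f,F)+1)-c\geq 2\chi(X)-c$, whence $\varLambda(f,X)\geq a-b+2\chi(X)-c$. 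Combining this with $N(f)=a+b+c$ and the trivial bound $a\geq 0$ gives $\varLambda(f,X)\geq 2\chi(X)-N(f)$, which is the remaining inequality $-(N(f)-\chi(X))\leq \varLambda(f,X)-\chi(X)$; here one also uses $\chi(X)<0\leq N(f)$, so that the right-hand side $N(f)-\chi(X)$ is nonnegative and the two estimates together yield the absolute-value bound.

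The main obstacle is, without question, the two index bounds I am assuming: these are the substance of the theorem and rest on the full weight of surface dynamics (Nielsen--Thurston theory, prime-end rotation numbers, and Le Calvez's forcing and equivariant-foliation techniques), well beyond the scope of a short derivation. By contrast, the passage from those bounds to the stated Lefschetz--Nielsen inequality is entirely formal, requiring only the integrality of the fixed-point indices and the additivity $\varLambda(f,X)=\sum_F\mathrm{ind}(f,F)$.
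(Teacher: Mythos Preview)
Your proposal is correct. In fact, the paper does not prove this theorem at all: it is stated with the attribution ``The next theorem appears in \cite{Jiang acotacion}. A similar result is in \cite{Lecalvez acotacion}'' and no argument is given, not even for the ``in particular'' clause. Your plan --- cite the two deep index bounds from Jiang and Le Calvez, then derive the Lefschetz--Nielsen inequality by the elementary counting argument with $N(f)=a+b+c$ --- is exactly the right way to present it, and your arithmetic for both halves of the absolute-value bound checks out. So you are doing strictly more than the paper does here.
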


\begin{theorem}[{\cite{Kelly}}]\label{acotacion superficies borde}
    Let $X$ be a compact connected surface with boundary and let $f:X\rightarrow X$ be a continuous map such that $\#\mathrm{Fix}(f)\leq\#\mathrm{Fix}(g)$ for every continuous map $g$ homotopic to $f$ {\rm(}here $\mathrm{Fix}(f)$ is the set of fixed points of $f${\rm)}. In this case, $\mathrm{ind}(x)\le 1$ for all $x\in\mathrm{Fix}(f)$, and 
    \begin{equation*}
                \sum_{\mathrm{ind}(x)< -1} (\mathrm{ind}(x))+1)\geq 2\chi(X)\;.
    \end{equation*}
\end{theorem}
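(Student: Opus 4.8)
The plan is to import the closed-surface bound of Theorem~\ref{acotacion superficies} to the bounded setting by doubling, using the minimizing hypothesis to pass from fixed-point \emph{classes} to individual fixed \emph{points}. The substance of the statement lies in the range $\chi(X)<0$; the finitely many surfaces with $\chi(X)\ge 0$ (the disk, the annulus, and the M\"obius band) are low-complexity and would be treated directly by hand. So I would assume $\chi(X)<0$ and first normalize $f$: using that $f$ minimizes $\#\mathrm{Fix}(f)$ in its homotopy class together with a collar of $\partial X$, I would homotope $f$ without increasing the number of fixed points so that every fixed point is isolated, lies in $\mathrm{int}(X)$, and $f$ is fixed-point free on a collar of the boundary. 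Each isolated fixed point $x$ then has a well-defined local index $\mathrm{ind}(x)$, with $\sum_x \mathrm{ind}(x)=\varLambda(f,X)$.

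Next I would form the double $DX=X\cup_{\partial X}X$, a closed surface with $\chi(DX)=2\chi(X)<0$, carrying the reflection involution $\sigma$ that swaps the two copies $X_1,X_2$. I would extend $f$ to a $\sigma$-symmetric map $Df\colon DX\to DX$ restricting to $f$ on each copy; since $f$ is fixed-point free on the collar, $Df$ can be arranged to have no fixed points on the gluing locus $\partial X$ (the fixed locus of $\sigma$), so $\mathrm{Fix}(Df)$ splits into two mirror copies of $\mathrm{Fix}(f)$, each point keeping its local index because it sits inside the open subsurface $\mathrm{int}(X_i)\subset DX$. Applying Theorem~\ref{acotacion superficies} to $Df$ then gives $\mathrm{ind}(Df,F)\le 1$ for every fixed-point class $F$ and $\sum_{\mathrm{ind}(Df,F)<-1}\bigl(\mathrm{ind}(Df,F)+1\bigr)\ge 2\chi(DX)=4\chi(X)$. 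The involution pairs the relevant classes of $Df$ into mirror pairs of equal index, so the left-hand side is twice the corresponding sum carried by $X_1$; dividing by $2$ recovers exactly the target constant $2\chi(X)$, while the class bound $\le 1$ forces $\mathrm{ind}(x)\le 1$ for each fixed point $x$ of $f$.

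I expect the main obstacle to be the gap between the fixed-point-\emph{class} indices controlled by Theorem~\ref{acotacion superficies} and the individual fixed-\emph{point} indices demanded here: a class index $\le 1$ does not by itself bound the index of a single point inside the class, and because surfaces fail the Wecken property a minimizing map may realize one class by several points, in which case the point-level sum $\sum_{\mathrm{ind}(x)<-1}(\mathrm{ind}(x)+1)$ can be strictly more negative than the class-level sum. Closing this gap is precisely where the minimizing hypothesis must enter, through the local cancellation and coalescence lemmas for surface maps (Jiang--Guo-type theory): minimality rules out cancellable opposite-index pairs and forces each essential class of index $<-1$ to be concentrated at a single point, so that the point-level and class-level sums agree on the terms that matter while the leftover non-essential points carry index in $\{-1,0,1\}$ and contribute nothing to $\sum_{\mathrm{ind}(x)<-1}$. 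I would also have to verify that this reduced configuration of $f$ is compatible with the symmetric extension $Df$ — in particular that the mirror points $x$ and $\sigma(x)$ fall in distinct classes of $Df$, so that the factor-of-two pairing is genuine — and this compatibility check is the delicate step on which the whole reduction rests.
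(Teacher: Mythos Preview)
The paper does not prove this statement at all: it is quoted verbatim from \cite{Kelly} (note the attribution in the theorem header) and is used as a black box in Section~\ref{seccion aplicaciones practicas}. So there is no ``paper's own proof'' to compare your proposal against.

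That said, a brief remark on your strategy. You propose to deduce Kelly's bounded-surface result from Jiang's closed-surface result (Theorem~\ref{acotacion superficies}) via doubling. Chronologically this is backwards: \cite{Kelly} is from 1997 and \cite{Jiang acotacion} from 1998, and Kelly's argument proceeds by direct surface-topology techniques (train tracks / branched-surface style combinatorics for maps minimizing the number of fixed points), not by reduction to the closed case. More substantively, the gap you yourself flag is real and not easily closed by the doubling trick: Theorem~\ref{acotacion superficies} controls indices of fixed-point \emph{classes}, whereas the statement you are asked to prove is about indices of individual fixed \emph{points} of a minimizing representative. Your claim that minimality forces each essential class of index $<-1$ to be realized by a single point is exactly the hard content of Kelly's paper and is not a consequence of Jiang's class-level inequality plus symmetry; indeed, on surfaces of negative Euler characteristic the Wecken property fails, so a minimizing map can genuinely carry several fixed points in one class. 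The verification that mirror points $x$ and $\sigma(x)$ lie in distinct Nielsen classes of $Df$ is also nontrivial, since $\pi_1$ of the double is not simply two copies of $\pi_1(X)$. In short, your outline identifies the right difficulties but does not resolve them; the actual proof in \cite{Kelly} attacks the point-level statement directly rather than via Theorem~\ref{acotacion superficies}.
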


\begin{theorem}[{\cite{G-K}}]\label{acotacion wedge}
    Let $X$ be the wedge product of $m$ compact connected surfaces $X_1,\ldots,X_m$ {\rm(}some of them may have boundary{\rm)} with non positive Euler characteristic. Let $f:X\rightarrow X$ be a map such that $\#\mathrm{Fix}(f)\leq\#\mathrm{Fix}(g)$ for every continuous map $g:X\rightarrow X$ homotopic to $f$. Then:
    \begin{equation*}
        \sum_{\mathrm{ind}(x)\geq 1} (\mathrm{ind}(x))-1)\leq 0\;,\quad
        \sum_{\mathrm{ind}(x)\leq -1} (\mathrm{ind}(x))+1)\geq 2\chi(X)\;.
    \end{equation*}
    In particular,  $\mathrm{ind}(x)\le 1$ for all $x\in\mathrm{Fix}(f)$.
\end{theorem}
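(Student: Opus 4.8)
The plan is to reduce the statement to the single-surface bounds already recorded in Theorems~\ref{acotacion superficies} and \ref{acotacion superficies borde}, treating the wedge point $w$ as the only genuinely new feature. Write $X=X_1\vee\dots\vee X_m$ with common point $w$, let $\iota_i\colon X_i\hookrightarrow X$ be the inclusions and $r_i\colon X\to X_i$ the retractions that collapse every summand except $X_i$ to $w$, and set $f_i\defeq r_i\circ f\circ\iota_i\colon X_i\to X_i$. The first observation is that fixed points behave locally: if $x\in X_i\setminus\{w\}$ satisfies $f(x)=x$, then $f$ maps a whole neighborhood of $x$ into $X_i$ (since $f(x)=x$ lies in the interior of the summand $X_i$ inside $X$), so on that neighborhood $r_i$ is the identity, $f$ and $f_i$ agree, and $x$ is a fixed point of $f_i$ carrying the same local index. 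Conversely, the only fixed point $f_i$ can have off $X_i\setminus\{w\}$ is $w$ itself. Thus, away from $w$, the fixed points of $f$ are partitioned among the summands, each contributing the index it has as a fixed point of the corresponding $f_i$.

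First I would establish the upper bound. For each component I would apply the appropriate single-surface bound to $f_i$ (Theorem~\ref{acotacion superficies} when $X_i$ is closed of negative Euler characteristic, Theorem~\ref{acotacion superficies borde} when $X_i$ has boundary, and the analogous flat input for closed summands with $\chi=0$); each gives $\mathrm{ind}(x)\le 1$ for every fixed point of $f_i$, hence for every non-wedge fixed point of $f$. To bound the index at $w$ I would invoke additivity of the fixed-point index over the wedge: when $f(w)=w$ one has, by a Mayer--Vietoris / cofibration-type computation,
\begin{equation*}
    \mathrm{ind}(f,w)=\sum_{i=1}^{m}\mathrm{ind}(f_i,w)-(m-1)\;,
\end{equation*}
and since each $\mathrm{ind}(f_i,w)\le 1$ this yields $\mathrm{ind}(f,w)\le 1$ as well. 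This proves $\mathrm{ind}(x)\le1$ for all $x$, which is equivalent to the first displayed inequality (all of whose summands are then nonnegative yet sum to zero).

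For the negative part I would sum the second inequality of the single-surface bounds over the components. Each gives $\sum_{\mathrm{ind}(f_i,x)\le -1}(\mathrm{ind}(f_i,x)+1)\ge 2\chi(X_i)$, and adding over $i$ produces on the right $\sum_i 2\chi(X_i)=2\chi(X)+2(m-1)$, using the wedge formula $\chi(X)=\sum_i\chi(X_i)-(m-1)$. On the left, each non-wedge fixed point contributes once, while $w$ is counted $m$ times; the additivity identity above gives $\sum_{i}(\mathrm{ind}(f_i,w)+1)-2(m-1)=\mathrm{ind}(f,w)+1$, so the surplus $2(m-1)$ on the right is exactly absorbed when the $m$-fold wedge-point contribution is re-expressed through the single global index $\mathrm{ind}(f,w)$. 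After this bookkeeping the inequality collapses to $\sum_{\mathrm{ind}(x)\le -1}(\mathrm{ind}(x)+1)\ge 2\chi(X)$, as desired; it is reassuring that the $-(m-1)$ in the index additivity matches precisely the $-(m-1)$ in the Euler characteristic of the wedge.

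The delicate point — and the step I expect to be the main obstacle — is that Theorems~\ref{acotacion superficies} and \ref{acotacion superficies borde} require the auxiliary maps $f_i$ to themselves minimize the number of fixed points in their homotopy classes, whereas the hypothesis only supplies minimality of $f$ on $X$. One must argue that a fixed-point reduction of some $f_i$ could be propagated, through the wedge structure, to a reduction of $f$, contradicting its minimality; this amounts to realizing homotopies of the $f_i$ as homotopies of $f$ without creating spurious fixed points on the other summands, and is most transparent when $f$ preserves each summand. The remaining care is the wedge-point analysis itself: since $w$ is not a manifold point, its index cannot come from the surface bounds and must be handled by the additivity formula, and the case $f(w)\neq w$ (where $w$ contributes nothing but the summands need not be preserved) must be treated separately, after first homotoping $f$, if necessary, so that the wedge point is either free or the unique fixed point of its class.
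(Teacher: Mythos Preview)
The paper does not prove this theorem: it is quoted from Gon\c{c}alves--Kelly \cite{G-K} and used as a black-box input in Section~\ref{seccion aplicaciones practicas}, so there is no argument in the paper to compare your sketch against.

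As for the sketch itself, you have correctly identified the main obstruction, and it is a genuine gap rather than a detail: Theorems~\ref{acotacion superficies} and~\ref{acotacion superficies borde} apply only to maps that already minimize the number of fixed points in their homotopy class, and minimality of $f$ on the wedge does not descend to minimality of the projected maps $f_i=r_i\circ f\circ\iota_i$. Lifting a fixed-point-reducing homotopy of $f_i$ back to one of $f$ requires first arranging that $f$ preserves each summand and then controlling what happens on the other summands during the homotopy, neither of which is automatic; this is precisely the work done in \cite{G-K}. There is also a concrete error in your lower-bound bookkeeping: the arithmetic identity $\sum_i(\mathrm{ind}(f_i,w)+1)-2(m-1)=\mathrm{ind}(f,w)+1$ only converts the summed single-surface inequalities into the wedge inequality if \emph{every} term $\mathrm{ind}(f_i,w)+1$ actually appears on the left of the $i$-th inequality, i.e.\ if $\mathrm{ind}(f_i,w)\le-1$ for all $i$; when some $\mathrm{ind}(f_i,w)\in\{0,1\}$ that term is simply absent from the sum $\sum_{\mathrm{ind}(f_i,x)\le-1}(\cdot)$ and your cancellation of the $2(m-1)$ fails. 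Finally, keep in mind that Theorem~\ref{acotacion superficies} bounds indices of fixed-point \emph{classes}, not of individual points, so invoking it for a closed summand $X_i$ presupposes that the fixed-point classes of $f_i$ are singletons, which again needs the minimality you do not have.
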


\begin{theorem}[{Cf.\cite[Theorem 1.1]{J-W-Z}}]\label{acotacion en grafos}
    Let $X$ be a finite connected graph and let $f:X\rightarrow X$ be a continuous map. Then the index of each fixed-point class of $f$ is less or equal to $1$.
\end{theorem}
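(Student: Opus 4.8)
The plan is to pass to the universal cover and reduce the statement to a clean fact about self-maps of trees. Let $p:T\to X$ be the universal covering; since $T$ is the universal cover of a finite connected graph, it is a (possibly infinite) tree, hence contractible. A fixed-point class $\mathbf{F}$ of $f$ is the projection $p(\mathrm{Fix}(\tilde f))$ of the fixed-point set of a suitable lift $\tilde f:T\to T$ of $f$, and $p$ carries an isolating neighborhood of $\mathrm{Fix}(\tilde f)$ homeomorphically onto an isolating neighborhood of $\mathbf{F}$. Since the fixed-point index is a local homeomorphism invariant, the index of the class $\mathbf{F}$ equals the index of the compact set $\mathrm{Fix}(\tilde f)$ as a fixed-point set of $\tilde f$ in $T$ (the class $\mathbf{F}$ is closed in the compact space $X$, hence compact, and $p$ restricts to a homeomorphism $\mathrm{Fix}(\tilde f)\to\mathbf{F}$, so $\mathrm{Fix}(\tilde f)$ is compact too). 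Thus it suffices to prove the following tree lemma: \emph{if $T$ is a tree and $F:T\to T$ is continuous with $\mathrm{Fix}(F)$ compact, then $i(T,F,V)\le 1$ for an isolating open neighborhood $V$ of $\mathrm{Fix}(F)$.}

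To prove this I would use a retraction onto a large ball together with the fact that the Lefschetz number of any self-map of a finite tree is $1$. Fix a basepoint in $\mathrm{Fix}(F)$ and let $K$ be the compact subtree of all points at distance at most $R$ from it, with $R$ large and generic so that $\mathrm{Fix}(F)\subset\mathrm{int}(K)$ and $F$ has no fixed point on $K\setminus\mathrm{int}(K)$ (possible since $\mathrm{Fix}(F)$ is compact). Let $\rho:T\to K$ be the canonical nearest-point retraction available on a tree, and set $g=\rho\circ F:K\to K$. Because $K$ is contractible, $\varLambda(g,K)=1$. The interior fixed points of $g$ are exactly those of $F$, with the same local indices (near such a point $F$ already maps into $K$, so there $g$ agrees with $F$). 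Hence
\begin{equation*}
    1=\varLambda(g,K)=i\bigl(T,F,\mathrm{int}(K)\bigr)+\sum_{b}\mathrm{ind}(g,b)\;,
\end{equation*}
where the sum runs over the fixed points of $g$ lying on the boundary leaves of $K$.

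The remaining point, which I expect to be the main obstacle, is to show that every boundary term is nonnegative; in fact each equals $+1$. A boundary leaf $b$ of $K$ is a fixed point of $g$ precisely when $F$ pushes $b$ \emph{outward}, i.e. $F(b)$ lies beyond $b$ on the ray leaving $K$, so that $\rho(F(b))=b$; at an inward boundary point $\rho\circ F$ does not fix $b$. Near such an outward leaf, continuity forces $F$ to send the single adjacent germ to points outside $K$, which $\rho$ collapses back toward $b$, so $b$ behaves as an attracting endpoint of an interval and $\mathrm{ind}(g,b)=+1$. Therefore $\sum_b\mathrm{ind}(g,b)$ equals the number of outward ends of $K$, which is $\ge 0$, and the displayed identity gives $i\bigl(T,F,\mathrm{int}(K)\bigr)=1-\#\{\text{outward ends}\}\le 1$. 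The delicate steps are the generic choice of $R$ (ruling out degenerate landings of $F(b)$ exactly on the boundary, handled by perturbing $R$), the verification that the boundary behavior is exactly of this attracting type, and the standard but careful bookkeeping identifying the class index with the index of the lifted fixed-point set.
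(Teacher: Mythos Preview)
The paper does not prove this statement; it is quoted as \cite[Theorem~1.1]{J-W-Z} and used as a black box. So you are supplying an argument where the paper offers none, and your tree lemma in the second half is essentially correct and cleanly argued: retracting onto a large ball $K$, identifying the interior contribution with the index of $F$ on $\mathrm{int}(K)$, and checking that each boundary fixed point of $\rho\circ F$ is an attracting leaf with local index $+1$ is exactly the right picture.

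The genuine gap is in the reduction step. The assertion that $p$ restricts to a homeomorphism $\mathrm{Fix}(\tilde f)\to\mathbf{F}$ (and carries an isolating neighborhood of one homeomorphically to an isolating neighborhood of the other) is false in general. Take $X=S^1$ and $f=\mathrm{id}$: the universal cover is $T=\mathbb{R}$, the lift $\tilde f=\mathrm{id}$ has $\mathrm{Fix}(\tilde f)=\mathbb{R}$, while the unique fixed-point class is $\mathbf{F}=S^1$; here $p:\mathbb{R}\to S^1$ is neither injective nor has compact source, so your tree lemma cannot even be invoked. The same phenomenon occurs on any finite graph with nontrivial $\pi_1$ whenever the relevant lift has nontrivial stabilizer. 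Concretely, if $\tilde x,\sigma\tilde x\in\mathrm{Fix}(\tilde f)$ for a deck transformation $\sigma$, then $\tilde f\sigma$ and $\sigma\tilde f$ are lifts of $f$ agreeing at $\tilde x$, hence equal; so the fibers of $p:\mathrm{Fix}(\tilde f)\to\mathbf{F}$ are precisely the orbits of $Z(\tilde f)=\{\sigma:\sigma\tilde f=\tilde f\sigma\}$. For the base lift this group is $\mathrm{Fix}(f_*)\le\pi_1(X)$, which is determined by the homotopy class of $f$ and cannot be made trivial by perturbation.

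This is not a cosmetic issue. When $\mathrm{Fix}(f_*)\ne 1$ the set $\mathrm{Fix}(\tilde f)$ is typically non-compact, and even after perturbing $f$ to have isolated fixed points your identity $\mathrm{ind}(\mathbf{F})=i(T,\tilde f,V)$ fails: the right-hand side counts each point of $\mathbf{F}$ with multiplicity $|Z(\tilde f)|$. Handling the case of nontrivial fixed subgroup is precisely where the substance of the Jiang--Wang--Zhang argument lies (their bounds are tied to ranks of fixed subgroups of free-group endomorphisms). Your approach recovers the theorem cleanly in the special case $\mathrm{Fix}(f_*)=1$, but as written it does not reach the general statement.
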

 
 As in Theorem~\ref{acotacion superficies}, the bounds of the index lead to
     \begin{equation*}
        |\varLambda(f,X)-\chi(X)|\leq N(f)-\chi(X)\;.
    \end{equation*}
Indeed, these bounds also allow us to obtain certain fixed-point results for unbounded spaces. As a first idea of what we are going to do with the combinatorial Lefschetz number, imagine that $U$ is an unbounded space, $f:U\rightarrow U$ a continuous map and suppose that $f$ can be extended to a map $\tilde{f}:U^\infty\rightarrow U^\infty$ (recall that $U^\infty$ means the Alexandroff's compactification of $U$). Imagine also that $U^\infty$ and $\tilde{f}$ are in the conditions of Theorem~\ref{acotacion superficies}, \ref{acotacion superficies borde}, \ref{acotacion wedge} or \ref{acotacion en grafos}, so the index of any fixed point is bounded by $1$. In this case, if $\varLambda(\tilde{f},U^\infty)>1$, $f$ must have a fixed point. This is true because, if $f$ has no fixed points, $\tilde{f}$ can have at most one fixed point (the point at infinity of the Alexandroff's compactification) whose index is bounded by one. In this case,
\begin{equation*}
    \varLambda(\tilde{f},U^\infty)= \mathrm{ind}(\infty)\leq1\;,
\end{equation*}
where the first equality follows from the normalization and additivity axioms of the index. But this contradicts the assumption $\varLambda(\tilde{f},U^\infty)>1$.

This first and primitive idea, although quite simple and can be considered as a corollary of the formula
\begin{equation*}
    |\varLambda(f,X)-\chi(X)|\leq N(f)-\chi(X)\;,
\end{equation*}
has not been exploited in fixed-point theory. In particular, taking compactifications to study the existence of fixed points with the Lefschetz number is not very common because the first thing one thinks is that the the fixed points detected by the Lefschetz number are those in the corona of the compactification.

Indeed, and this is the key of this section, the importance of our results is that their statements will be in terms of the combinatorial Lefschetz number on the unbounded space $U$ and not in terms of the Lefschetz number on $U^\infty$. Even if, by the additivity of the combinatorial Lefschetz number, $\varLambda_{\mathrm{comb}}(f,U)$ and $\varLambda(\tilde{f},U^\infty)$ are related, $\varLambda_{\mathrm{comb}}(f,U)$ is, in some way, independent of $\varLambda(\tilde{f},U^\infty)$ since, due to the topological invariance of the combinatorial Lefschetz number, it can be computed in another compactification different from $U^\infty$. This is very strong since, therefore, $\varLambda_{\mathrm{comb}}(f,U)$ will be always easier to compute than $\varLambda(\tilde{f},U^\infty)$. The examples in this section will illustrate the relevance of this advantage, especially Example~\ref{ej compactificaciones al reves}.

So, using the combinatorial Lefschetz number, the main idea of this section can be summarized as follows:

\begin{idea}\label{idea teorema final}
    Let $U\subset \mathbb{R}^n$ be a definable subspace closed in $\mathbb{R}^n$ and let $f:U\rightarrow U$ be a homeomorphism. Suppose that $U^\infty$ and $\tilde{f}$ satisfy the hypothesis of Theorem~\ref{acotacion superficies}, \ref{acotacion superficies borde}, \ref{acotacion wedge} or \ref{acotacion en grafos}.

    Assume now that $f$ has no fixed points. In this case, $\infty$ is the only fixed point of $\tilde{f}$ (in particular, the index of the point $\infty$ is equal to the one of its fixed-point class). Then, by Theorem~\ref{acotacion superficies}, \ref{acotacion superficies borde}, \ref{acotacion wedge} or \ref{acotacion en grafos}, we have that $\mathrm{ind}(\infty)\le 1$ (in fact, we will also have a lower bound depending on the Euler characteristic of the space).

    We also have

    \begin{align*}
        \mathrm{ind}(\infty)=\varLambda(\tilde{f},U^\infty)&=\varLambda(\tilde{f},U)_{U^\infty}+\varLambda(\tilde{f},\{\infty\})_{U^\infty}\\
        &=\varLambda_{\mathrm{comb}}(f,U)+1\;,
    \end{align*}
    where the first equality is a consequence of the normalization and additivity axioms of the index. Then,
    \begin{equation*}
        \varLambda_{\mathrm{comb}}(f,U)=\mathrm{ind}(\infty)-1\;.
    \end{equation*}
    Since $\mathrm{ind}(\infty)\leq 1$, we get $\varLambda_{\mathrm{comb}}(f,U)\le 0$. So, when $\varLambda_{\mathrm{comb}}(f,U)\geq 1$, $f$ must have a fixed point (and in this way we can state some fixed-point theorems for unbounded spaces). In order to simplify the notation, most of the results that we are going to state use only the upper bounds of the index. However, we must emphasize that, using the same ideas, more powerful results can be obtained combining the upper and lower bounds of Theorems~\ref{acotacion superficies}, \ref{acotacion superficies borde}, \ref{acotacion wedge} and \ref{acotacion en grafos}, but, since these theorems depend on the Euler characteristic of the compactification (or the combinatorial Euler characteristic of the original space), in this paper, most of the times we will not use the lower bounds to avoid tedious statements. Only in Theorem~\ref{primer resultado gordo, wedge}, we will illustrate the use of these lower bounds. 
\end{idea}
Now, from Theorems~\ref{acotacion superficies}, \ref{acotacion superficies borde}, \ref{acotacion wedge} and \ref{acotacion en grafos}, we obtain four fixed-point theorems. The proofs are similar in all of them, so we will only prove Theorem~\ref{primer resultado gordo, wedge}.

\begin{theorem}\label{primer resultado gordo, wedge}
    Let $U_1,\ldots, U_m\subset\mathbb{R}^n$ be disjoint definable and closed subspaces of $\mathbb{R}^n$ such that $U_i^\infty$ is a compact and connected surface {\rm(}with or without boundary{\rm)} for every $i\in\{1,\ldots,m\}$. Suppose also that $\varLambda_{\mathrm{comb}}(\mathrm{id},U_i)\leq-1$ {\rm(}that is, the combinatorial Euler characteristic  of the $U_i$'s is negative{\rm)}. Finally, let $f:\bigcup_{i=1}^m U_i\rightarrow \bigcup_{i=1}^m U_i$ be a homeomorphism. If, either $\varLambda_{\mathrm{comb}}(f,\bigcup_{i=1}^m U_i)>0$, or $\sum_i\varLambda_{\mathrm{comb}}(\mathrm{id},U_i)=-1$ and $\varLambda_{\mathrm{comb}}(f,\bigcup_{i=1}^mU_i)<-2$, $f$ must have a fixed point.
\end{theorem}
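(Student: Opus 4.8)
The plan is to follow the scheme of Idea~\ref{idea teorema final}, applied to the wedge bound Theorem~\ref{acotacion wedge}, and to derive a contradiction from the assumption that $f$ has no fixed point. First I would set up the compactification. Since the $U_i$ are finitely many disjoint closed definable subsets of $\mathbb{R}^n$, their union $U=\bigcup_{i=1}^m U_i$ is again closed and definable, so by Remark~\ref{la extension de homeos de cerrados existe} the homeomorphism $f$ extends to a homeomorphism $\tilde f\colon U^\infty\to U^\infty$ with $\tilde f(\infty)=\infty$. The one-point compactification of a finite disjoint union identifies all the separate ``infinities'' to the single point $\infty$, so $U^\infty=\bigvee_{i=1}^m U_i^\infty$ is precisely the wedge of the surfaces $U_i^\infty$; this is the setting of Theorem~\ref{acotacion wedge}. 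Note each $U_i$ must be unbounded (otherwise $U_i^\infty$ would be disconnected, not a connected surface), and each $\chi(U_i^\infty)\le 0$ by the Euler-characteristic hypothesis, so the hypotheses on the wings hold.

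Next I would record the two identities linking the combinatorial numbers to Euler characteristics and to the index. By the decomposition $U_i^\infty=U_i\sqcup\{\infty\}$ (with $U_i$ open in $U_i^\infty$) and additivity (Remark~\ref{relacion con relativo}), together with Definition~\ref{def numero comb en no acotados}, one gets $\chi(U_i^\infty)=\varLambda(\mathrm{id},U_i^\infty)=\varLambda_{\mathrm{comb}}(\mathrm{id},U_i)+1$, and hence, using $\chi(\bigvee_i U_i^\infty)=\sum_i\chi(U_i^\infty)-(m-1)$, the wedge has $\chi(U^\infty)=\sum_i\varLambda_{\mathrm{comb}}(\mathrm{id},U_i)+1$. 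Likewise, by Definition~\ref{def numero comb en no acotados} and additivity, $\varLambda(\tilde f,U^\infty)=\varLambda_{\mathrm{comb}}(f,U)+1$. Now suppose $f$ has no fixed point; then $\infty$ is the \emph{only} fixed point of $\tilde f$, so by the normalization and additivity axioms of the index $\mathrm{ind}(\infty)=\varLambda(\tilde f,U^\infty)=\varLambda_{\mathrm{comb}}(f,U)+1$.

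The key point — and the step I expect to be the main obstacle — is that Theorem~\ref{acotacion wedge} (unlike the closed-surface and graph bounds) requires \emph{minimality}, namely $\#\mathrm{Fix}(\tilde f)\le\#\mathrm{Fix}(g)$ for every $g$ homotopic to $\tilde f$. I would dispatch this using the numerical hypotheses themselves: in the first case $\varLambda_{\mathrm{comb}}(f,U)>0$ forces $\mathrm{ind}(\infty)=\varLambda(\tilde f,U^\infty)\ge 2\ne 0$, and in the second case $\varLambda_{\mathrm{comb}}(f,U)<-2$ forces $\mathrm{ind}(\infty)\le -2\ne 0$. In either case $\varLambda(\tilde f,U^\infty)\ne 0$, so by the classical Lefschetz fixed-point theorem every $g\simeq\tilde f$ has at least one fixed point; since $\tilde f$ has exactly one, it realizes the minimum and the hypothesis of Theorem~\ref{acotacion wedge} is met.

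Finally I would close the two cases. In the first, Theorem~\ref{acotacion wedge} gives $\mathrm{ind}(\infty)\le 1$, contradicting $\mathrm{ind}(\infty)\ge 2$. In the second, $\sum_i\varLambda_{\mathrm{comb}}(\mathrm{id},U_i)=-1$ yields $\chi(U^\infty)=0$, and since $\infty$ is the unique fixed point with $\mathrm{ind}(\infty)\le -2\le -1$, the lower bound of Theorem~\ref{acotacion wedge} reads $\mathrm{ind}(\infty)+1\ge 2\chi(U^\infty)=0$, i.e. $\mathrm{ind}(\infty)\ge -1$, contradicting $\mathrm{ind}(\infty)\le -2$. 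Both contradictions show that $f$ must have a fixed point, which is what we wanted.
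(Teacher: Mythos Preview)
Your proof is correct and follows essentially the same approach as the paper's: reduce to $\tilde f$ on the one-point compactification $U^\infty=\bigvee_i U_i^\infty$, use the identity $\mathrm{ind}(\infty)=\varLambda(\tilde f,U^\infty)=\varLambda_{\mathrm{comb}}(f,U)+1$ when $\infty$ is the unique fixed point, verify the minimality hypothesis of Theorem~\ref{acotacion wedge} via $\varLambda(\tilde f,U^\infty)\ne 0$, and then contradict the upper bound $\mathrm{ind}(\infty)\le 1$ in the first case and the lower bound $\mathrm{ind}(\infty)\ge -1$ (from $\chi(U^\infty)=0$) in the second. Your write-up is in fact slightly more explicit than the paper's in spelling out why $U^\infty$ is a wedge of surfaces and in deriving the formula $\chi(U^\infty)=\sum_i\varLambda_{\mathrm{comb}}(\mathrm{id},U_i)+1$.
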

\begin{proof}
   Suppose that $f$ has no fixed points and let us define $U=\bigcup_{i=1}^mU_i$. In this case, $\infty$ is the only fixed point of $\tilde{f}$.
   
    On the one hand, for every $i\in\{1,\ldots,m\}$, we have
    \begin{equation*}
        \chi(U_i^\infty)=\varLambda(\mathrm{id},U_i^\infty)=\varLambda_{\mathrm{comb}}(\mathrm{id},U_i)+\varLambda(\mathrm{id},\infty)=\varLambda_{\mathrm{comb}}(\mathrm{id},U_i)+1\;.
    \end{equation*}
    Since $\varLambda_{\mathrm{comb}}(\mathrm{id},U_i)\leq -1$, we see that $U_i^\infty$ is a compact connected surface of non-positive Euler characteristic. Indeed, since
    \begin{equation*}
        \varLambda(\tilde{f},U^\infty)=\varLambda_{\mathrm{comb}}(f,U)+1
    \end{equation*}
    and $\varLambda_{\mathrm{comb}}(f,U)>0$, we obtain $\varLambda(\tilde{f},U^\infty)\neq 0$, and therefore the classical Lefschetz fixed-point theorem implies the existence of a fixed point of every map homotopic to $\tilde{f}$. But, since we were assuming that $\tilde{f}$ has only one fixed point, we are under the conditions of Theorem~\ref{acotacion wedge}, and so $\mathrm{ind}(\infty)\leq 1$.

    In this case,
    \begin{equation*}
        1\geq \mathrm{ind}(\infty)=\varLambda(\tilde{f},U^\infty)=\varLambda_{\mathrm{comb}},(f,U)+1
    \end{equation*}
    and finally $\varLambda_{\mathrm{comb}}(f,U)\leq0$, which is a contradiction. In consequence, $f$ must have a fixed point.

    For the second part of the theorem, suppose again that $f$ has no fixed points. Note that $\sum_i\varLambda_{\mathrm{comb}}(\mathrm{id},U_i)=-1$ implies $\chi(U^\infty)=-1+1=0$. Indeed, since $\varLambda_{\mathrm{comb}}(f,U)<-2$, we deduce $\varLambda(\tilde{f},U^\infty)<-2+1=-1$ and, in particular, $\varLambda(\tilde{f},U^\infty)\neq 0$. Hence, every map homotopic to $\tilde{f}$ must have a fixed point and, since we were assuming that $\tilde{f}$ has only one fixed point, we are under the conditions of Theorem~\ref{acotacion wedge}. Hence, we obtain $\mathrm{ind}(\infty)\geq -1$. But we also have
    \begin{align*}
        -1\leq\mathrm{ind}(\infty)=\varLambda(\tilde{f},U^\infty)=\varLambda_{\mathrm{comb}}(f,U)+1\;,
    \end{align*}
    and, consequently, $\varLambda_{\mathrm{comb}}(f,U)\geq -2$, which is another contradiction.
\end{proof}

\begin{remark}
    Theorem~\ref{acotacion wedge} also gives lower bounds of the index in the case where $\chi(U^\infty)<0$. It is also possible to use these bounds with Idea~\ref{idea teorema final} to obtain an analogous result to the second part of Theorem~\ref{primer resultado gordo, wedge}. 
\end{remark}

In order to handle this new tool, let us begin with a first and theoretical example. Later, in Example~\ref{ej compactificaciones al reves}, we will present a situation that is, although more complicated than this one, very illustrative and interesting.

\begin{example}\label{wedge dos klein}
    Let $U$ be the space in Figure~\ref{figura klein infinito}. It consists of two different Klein bottles minus one point, and then we have pulled a neighborhood of each of these points towards the infinity. It is easy to see that $U^\infty$ is the wedge product of two Klein bottles and hence, for homeomorphisms $f:U\rightarrow U$, we can use Theorem~\ref{primer resultado gordo, wedge} to detect fixed points. Indeed, if $f$ is not too bad (this will mean that the extension $\tilde{f}'$ below exists), it will be possible to compute $\varLambda_{\mathrm{comb}}(f,U)$ as $\varLambda(\tilde{f}',U)_{4\mathbb{P}}$ instead of $\varLambda(\tilde{f},U^\infty)$, where $4\mathbb{P}$ is the nonorientable surface of genus $4$ that results of the compactification of $U$ described in Figure~\ref{segunda compactificacion klein infinito} and $\tilde{f}':4\mathbb{P}\rightarrow4\mathbb{P}$ is a homeomorphism that extends $f$.
\end{example}
     \begin{figure}[htb]
    \centering
     \includegraphics[scale=0.35]{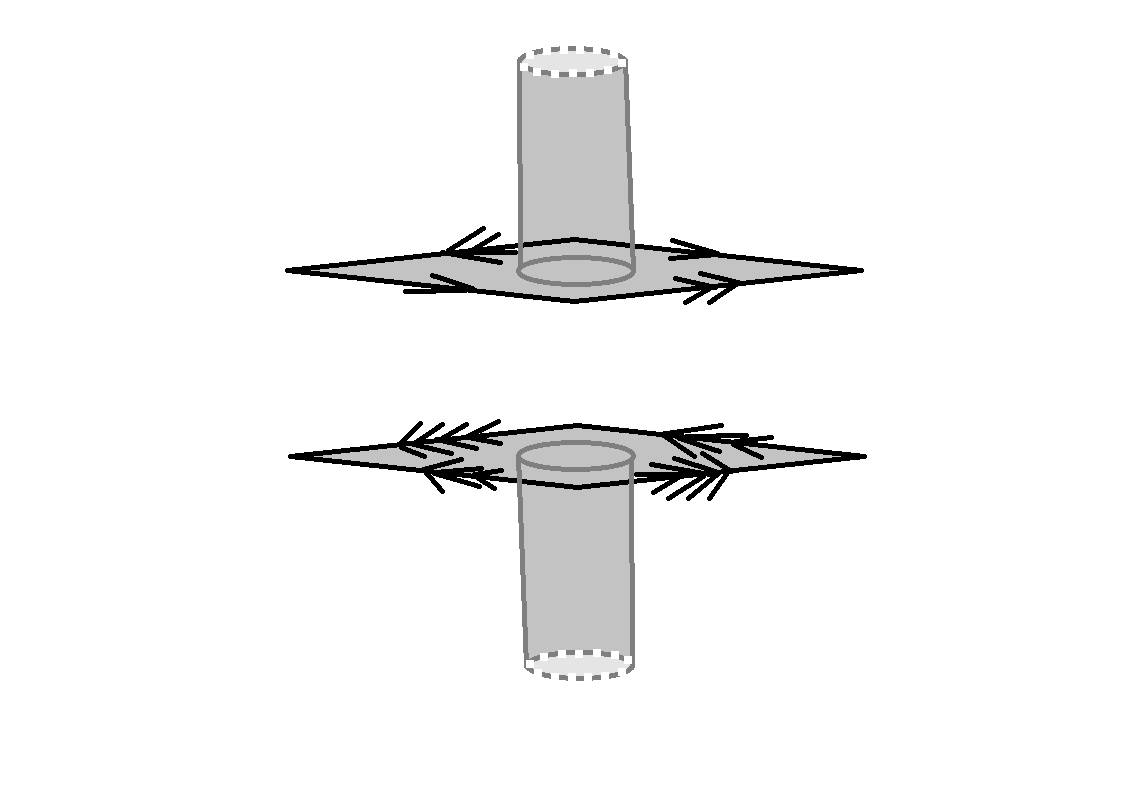} 
     \caption{Space $U$.}
     \label{figura klein infinito}
\end{figure}
     \begin{figure}[htb]
    \centering
     \includegraphics[scale=0.35]{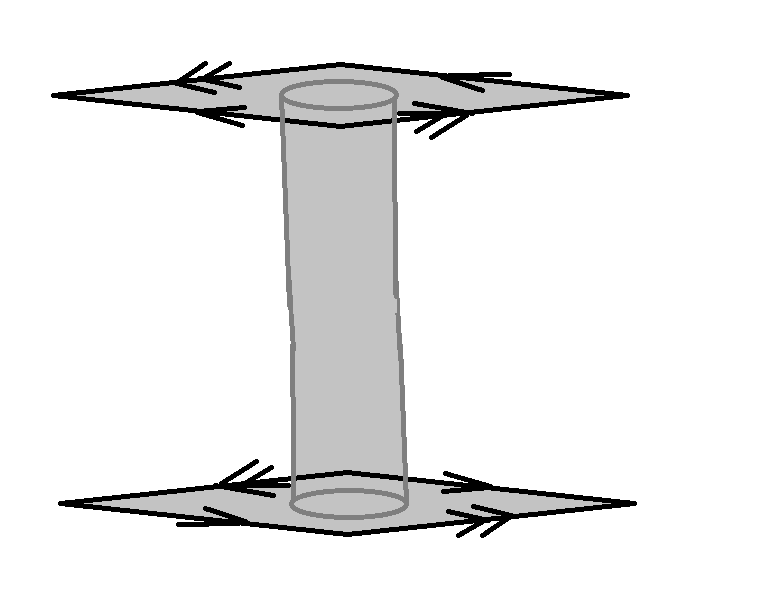} 
     \caption{Another different compactification: the surface $4\mathbb{P}$.}
     \label{segunda compactificacion klein infinito}
\end{figure}

\begin{remark}
    The number of possible examples of this kind is very large. Even without increasing the number of surfaces in the wedge, we can consider a similar space to that in Example~\ref{wedge dos klein} but with a Klein bottle and a torus instead of two Klein bottles. In this case, another compactification different from Alexandroff's one can be, once again, a nonorientable surface. 
\end{remark}

Theorem~\ref{acotacion en grafos} allows us to obtain the following result, whose proof is similar to that of the first statement in Theorem~\ref{primer resultado gordo, wedge}.

\begin{theorem}\label{primer resultado gordo, grafos}
    Let $U\subset \mathbb{R}^n$ be a definable connected subspace that is closed in $\mathbb{R}^n$ and of dimension $1$ {\rm(}by this we mean that, using Theorem~\ref{thm:triangulation}, we can triangulate it by a complex of dimension $1${\rm)}. Let $f:U\rightarrow U$ be a homeomorphism such that $\varLambda_{\mathrm{comb}}(f,U)\geq 1$. In this case, $f$ must have a fixed point.
\end{theorem}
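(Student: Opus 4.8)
The plan is to run the argument of Idea~\ref{idea teorema final} verbatim, now feeding in the index bound for graphs (Theorem~\ref{acotacion en grafos}) in place of the surface bound used for Theorem~\ref{primer resultado gordo, wedge}. The essential simplification here is that Theorem~\ref{acotacion en grafos} bounds the index of every fixed-point class of \emph{any} continuous self-map of a finite connected graph by $1$, with no Wecken-type minimality hypothesis. Thus, unlike the surface cases, no verification that $\tilde f$ realizes a minimal fixed-point set is needed, and the proof is a direct transcription of the first part of Theorem~\ref{primer resultado gordo, wedge}.

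First I would set up the compactification. If $U$ is bounded, then it is compact, so $\varLambda_{\mathrm{comb}}(f,U)=\varLambda(f,U)\ge 1\neq 0$ and the conclusion is immediate from the classical Lefschetz fixed-point theorem; hence I may assume $U$ is unbounded. By Remark~\ref{la extension de homeos de cerrados existe}, the Alexandroff compactification $U^\infty$ is definable and the extension $\tilde f\colon U^\infty\to U^\infty$ (equal to $f$ on $U$ and fixing $\infty$) is a homeomorphism satisfying $\tilde f(U^\infty\setminus U)=\{\infty\}\subset U^\infty\setminus U$; in particular $\varLambda_{\mathrm{comb}}(f,U)=\varLambda(\tilde f,U)_{U^\infty}$. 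Since $U$ is a closed definable subspace of dimension $1$ and $\infty$ is a limit point of the unbounded set $U$, the space $U^\infty$ is a compact connected definable $1$-dimensional set, hence, by Theorem~\ref{thm:triangulation}, a finite connected graph.

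Next I would argue by contradiction: suppose $f$ has no fixed point. Then $\infty$ is the only fixed point of $\tilde f$, so its fixed-point class is $\{\infty\}$, and by the normalization and additivity axioms of the index we have $\mathrm{ind}(\infty)=\varLambda(\tilde f,U^\infty)$. On the other hand, the additivity of the combinatorial Lefschetz number (Proposition~\ref{inclusion exclusion}), applied to the $\tilde f$-invariant decomposition $U^\infty=U\sqcup\{\infty\}$, yields
\begin{equation*}
    \varLambda(\tilde f,U^\infty)=\varLambda(\tilde f,U)_{U^\infty}+\varLambda(\tilde f,\{\infty\})_{U^\infty}=\varLambda_{\mathrm{comb}}(f,U)+1\;.
\end{equation*}
Applying Theorem~\ref{acotacion en grafos} to the finite connected graph $U^\infty$ and the continuous map $\tilde f$ gives $\mathrm{ind}(\infty)\le 1$, whence $\varLambda_{\mathrm{comb}}(f,U)=\varLambda(\tilde f,U^\infty)-1\le 0$, contradicting the hypothesis $\varLambda_{\mathrm{comb}}(f,U)\ge 1$. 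Therefore $f$ must have a fixed point.

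I expect the only delicate point to be the structural claim that $U^\infty$ is genuinely a finite connected graph rather than a disconnected or higher-dimensional object: connectivity is precisely what forces the reduction to the unbounded case (for compact $U$ the added point would be isolated), while one-dimensionality of the compactification follows from $U$ being definable of dimension $1$ together with the triangulation theorem, since adjoining the single point $\infty$ adds no dimension. Everything past this structural verification is the mechanical application of Idea~\ref{idea teorema final}.
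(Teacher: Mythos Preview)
Your proof is correct and follows the paper's intended route: the paper states that the argument is ``similar to that of the first statement in Theorem~\ref{primer resultado gordo, wedge}'', i.e.\ the direct application of Idea~\ref{idea teorema final} with the relevant index bound, which is exactly what you do. Your observation that Theorem~\ref{acotacion en grafos} needs no minimality hypothesis (so the Wecken step in the wedge proof drops out) is a legitimate simplification, and your extra care with the bounded case and with verifying that $U^\infty$ is a finite connected graph only makes the argument cleaner.
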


 Theorem~\ref{acotacion superficies borde} allows us to obtain the following result.
\begin{theorem}\label{primer resultado gordo, super con borde}
    Let $U\subset\mathbb{R}^n$ be a definable closed subspace of $\mathbb{R}^n$ such that $U^\infty$ is a compact connected surface with boundary. Let $f:U\rightarrow U$ be a homeomorphism such that $\varLambda_{\mathrm{comb}}(f,U)\geq 1$. Then, $f$ must have a fixed point.
\end{theorem}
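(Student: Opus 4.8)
The plan is to argue by contradiction, following the scheme of Idea~\ref{idea teorema final} with the index bound supplied by Theorem~\ref{acotacion superficies borde}. Suppose that $f$ has no fixed points. Since $U$ is definable and closed in $\mathbb{R}^n$ and $f$ is a homeomorphism, Remark~\ref{la extension de homeos de cerrados existe} lets me extend $f$ to a homeomorphism $\tilde{f}:U^\infty\rightarrow U^\infty$ with $\tilde{f}(\infty)=\infty$. Because $f$ is fixed-point free on $U$, the point $\infty$ is then the unique fixed point of $\tilde{f}$.

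Next I would record the additivity identity from Idea~\ref{idea teorema final}, namely
\begin{equation*}
\varLambda(\tilde{f},U^\infty)=\varLambda_{\mathrm{comb}}(f,U)+1\;.
\end{equation*}
The hypothesis $\varLambda_{\mathrm{comb}}(f,U)\geq 1$ gives $\varLambda(\tilde{f},U^\infty)\geq 2$, so in particular $\varLambda(\tilde{f},U^\infty)\neq 0$. By the classical Lefschetz fixed-point theorem, every map homotopic to $\tilde{f}$ has at least one fixed point; since $\tilde{f}$ itself has exactly one, it minimizes the number of fixed points in its homotopy class. This is precisely the Wecken-type hypothesis required by Theorem~\ref{acotacion superficies borde}, which applies because $U^\infty$ is a compact connected surface with boundary. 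I expect this verification of the minimality hypothesis to be the only delicate point: it is exactly where the nonvanishing of the Lefschetz number is used, and it mirrors the corresponding step in the proof of Theorem~\ref{primer resultado gordo, wedge}.

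Finally, Theorem~\ref{acotacion superficies borde} yields $\mathrm{ind}(\infty)\leq 1$. On the other hand, as $\infty$ is the only fixed point of $\tilde{f}$, the normalization and additivity axioms of the index give $\mathrm{ind}(\infty)=\varLambda(\tilde{f},U^\infty)=\varLambda_{\mathrm{comb}}(f,U)+1\geq 2$, contradicting $\mathrm{ind}(\infty)\leq 1$. Hence $f$ must have a fixed point. The whole argument is formally identical to the first part of Theorem~\ref{primer resultado gordo, wedge}, only replacing the wedge-of-surfaces bound of Theorem~\ref{acotacion wedge} by the surface-with-boundary bound of Theorem~\ref{acotacion superficies borde}.
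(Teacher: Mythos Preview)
Your proof is correct and follows exactly the approach the paper intends: the paper does not write out a separate proof for this theorem but simply says the argument is similar to that of Theorem~\ref{primer resultado gordo, wedge}, and your write-up is precisely that specialization, replacing the bound of Theorem~\ref{acotacion wedge} by the bound of Theorem~\ref{acotacion superficies borde} and verifying the minimality hypothesis via the nonvanishing of $\varLambda(\tilde{f},U^\infty)$ just as in the wedge case.
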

Theorem~\ref{acotacion superficies} allows us to obtain the following result.
\begin{theorem}\label{primer resultado gordo, superficies}
    Let $U\subset\mathbb{R}^n$ be a definable closed subspace of $\mathbb{R}^n$ such that $U^\infty$ is a compact connected surface of negative Euler characteristic. Let $f:U\rightarrow U$ be a homeomorphism. Then, $\varLambda_{\mathrm{comb}}(f,U)\geq 1$ implies the existence of a fixed point of $f$.
\end{theorem}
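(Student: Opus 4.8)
The plan is to argue by contradiction, following the scheme of Idea~\ref{idea teorema final}, with Theorem~\ref{acotacion superficies} supplying the needed bound on the index. First I would assume that $f$ has no fixed points in $U$. Since $U$ is definable and closed in $\mathbb{R}^n$, Remark~\ref{la extension de homeos de cerrados existe} guarantees that the Alexandroff compactification $U^\infty$ is definable and that $f$ extends to a homeomorphism $\tilde{f}:U^\infty\to U^\infty$ with $\tilde{f}(\infty)=\infty$; moreover $\varLambda_{\mathrm{comb}}(f,U)$ is well defined and computed through $U^\infty$ as in Definition~\ref{def numero comb en no acotados}. Under the no-fixed-point assumption, $\infty$ is the unique fixed point of $\tilde{f}$, and hence it constitutes a single fixed-point class on its own.

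Next I would compute the index of that class. By the normalization and additivity axioms of the fixed-point index, $\mathrm{ind}(\infty)=\varLambda(\tilde{f},U^\infty)$. On the other hand, the additivity of the combinatorial Lefschetz number gives
\[
\varLambda(\tilde{f},U^\infty)=\varLambda_{\mathrm{comb}}(f,U)+\varLambda(\tilde{f},\{\infty\})_{U^\infty}=\varLambda_{\mathrm{comb}}(f,U)+1\;.
\]
Combining these identities with the hypothesis $\varLambda_{\mathrm{comb}}(f,U)\geq 1$ yields $\mathrm{ind}(\infty)=\varLambda_{\mathrm{comb}}(f,U)+1\geq 2$.

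Finally I would invoke Theorem~\ref{acotacion superficies}: since $U^\infty$ is a compact connected surface of negative Euler characteristic and $\tilde{f}$ is a continuous self-map of it, the index of every fixed-point class of $\tilde{f}$ is at most $1$; in particular $\mathrm{ind}(\infty)\leq 1$. This contradicts $\mathrm{ind}(\infty)\geq 2$, so the assumption fails and $f$ must have a fixed point.

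I do not expect a serious obstacle here. Unlike the bordered-surface and wedge cases (Theorems~\ref{acotacion superficies borde} and \ref{acotacion wedge}), the bound of Theorem~\ref{acotacion superficies} holds for \emph{arbitrary} continuous self-maps, so there is no need to first replace $\tilde{f}$ by a fixed-point-minimizing representative or to verify a Wecken-type hypothesis through the classical Lefschetz theorem. The only points demanding care are the bookkeeping ones already isolated above: that the extension $\tilde{f}$ exists and is a homeomorphism (Remark~\ref{la extension de homeos de cerrados existe}), that under the contradiction hypothesis $\infty$ is genuinely the sole fixed point and therefore forms its own fixed-point class to which Theorem~\ref{acotacion superficies} applies, and that the passage $\mathrm{ind}(\infty)=\varLambda(\tilde{f},U^\infty)$ is justified by the normalization and additivity axioms of the index.
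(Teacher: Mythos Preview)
Your argument is correct and follows exactly the scheme the paper uses (Idea~\ref{idea teorema final}, specialized here via Theorem~\ref{acotacion superficies}); the paper in fact omits this proof, saying it is analogous to that of Theorem~\ref{primer resultado gordo, wedge}. Your observation that Theorem~\ref{acotacion superficies} applies to arbitrary continuous self-maps, so no preliminary verification that $\tilde{f}$ is fixed-point-minimizing is needed, is exactly the simplification that distinguishes this case from the wedge/bordered cases.
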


This argument can be generalized. That is, we can use a bound of the index in some compactification of $U$ (not only in Alexandroff's compactification) where $f$ admits an extension, but compute $\varLambda_{\mathrm{comb}}(f,U)$ in another compactification where the computations are easier. The following example illustrates this idea.

\begin{example}\label{ej compactificaciones al reves}
    Let $U$ be the space in Figure~\ref{dos toros infinitos}. This space consists of the disjoint union of two tori from which we have removed a point and then we have pulled a neighborhood of this point towards the infinity. 
    
    We can state the following assertion:
\begin{claim}
    Let $f:U\rightarrow U$ be a homeomorphism that can be extended to a homeomorphism $f'$ of the compactification of $U$ given by the double torus $2\mathbb{T}$ (see Figure~\ref{doble toro}). Suppose also that $f'$ fixes the circle that we add to $U$ in order to obtain the double torus. Then, if $\varLambda_{\mathrm{comb}}(f,U)\geq 2$, $f$ must have a fixed point.
\end{claim}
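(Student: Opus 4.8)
The plan is to run the mechanism of Idea~\ref{idea teorema final}, but to use the genus-two surface $2\mathbb{T}$ as the compactification on which the index is bounded, rather than the Alexandroff compactification $U^\infty$. The point is that $\chi(2\mathbb{T})=-2<0$, so Theorem~\ref{acotacion superficies} applies verbatim to the extension $f'$, while the topological invariance of the combinatorial Lefschetz number (Theorem~\ref{teor inv topo}, Remark~\ref{obs numero para abtas es inv}) guarantees that the number computed in $2\mathbb{T}$ is the very same $\varLambda_{\mathrm{comb}}(f,U)$. Concretely, since $f'$ is a homeomorphism with $f'(S)=S$, it restricts to a homeomorphism of $U=2\mathbb{T}\setminus S$ and sends the complement $S$ to itself, so by Definition~\ref{def numero comb en no acotados} we have $\varLambda_{\mathrm{comb}}(f,U)=\varLambda(f',U)_{2\mathbb{T}}$.

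First I would argue by contradiction: assume $f$ has no fixed point in $U$. Because $f'$ extends $f$ and $f'(S)=S$, every fixed point of $f'$ lies on the circle $S$. Splitting $2\mathbb{T}=U\sqcup S$ and using the additivity of the combinatorial Lefschetz number (Proposition~\ref{inclusion exclusion}, Remark~\ref{obs numero para abtas es inv}) together with \cite[Remark 3.5]{M-M1} (valid since $S$ is a subcomplex), I obtain
\[
\varLambda(f',2\mathbb{T})=\varLambda(f',U)_{2\mathbb{T}}+\varLambda(f'_{|S},S)=\varLambda_{\mathrm{comb}}(f,U)+\varLambda(f'_{|S},S),
\]
where $f'_{|S}$ is a homeomorphism of a circle, so $\varLambda(f'_{|S},S)\in\{0,2\}$ by Idea~\ref{computo en esferas con combinatorio}.

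The heart of the matter is then to bound $\varLambda(f',2\mathbb{T})$ from above. Since $\chi(2\mathbb{T})<0$, Theorem~\ref{acotacion superficies} tells us that \emph{every} fixed-point class of $f'$ has index at most $1$. Because all fixed points of $f'$ are confined to the single invariant circle $S$, the essential fixed-point classes of $f'$ are carried by those of the circle homeomorphism $f'_{|S}$ (whose Nielsen number is $0$ or $2$), together with at most one further class arising from the way $S$ sits transversally in $2\mathbb{T}$. Combining this with the per-class bound of Theorem~\ref{acotacion superficies}, I would establish $\varLambda(f',2\mathbb{T})\le\varLambda(f'_{|S},S)+1$. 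Substituting into the displayed identity gives $\varLambda_{\mathrm{comb}}(f,U)\le 1$, which contradicts the hypothesis $\varLambda_{\mathrm{comb}}(f,U)\ge 2$; hence $f$ must have a fixed point.

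The hard part will be exactly this last estimate, $\varLambda(f',2\mathbb{T})\le\varLambda(f'_{|S},S)+1$. The delicate point is that the fixed points of $f'$ all live on the one-dimensional invariant set $S$, so one must control both how they group into Nielsen classes of $f'$ in the ambient surface and the transversal contribution to each local index. A clean way to handle it is to compare the index of $f'$ at a point $x\in S$ with the index of $f'_{|S}$ at $x$ through the product (tangential $\times$ transversal) formula, noting that the transversal factor is $\pm 1$, and then to invoke the class-index bound of Theorem~\ref{acotacion superficies} to rule out the configurations in which the total index could exceed $\varLambda(f'_{|S},S)+1$. This is precisely where the threshold $2$ (rather than $1$) comes from: the invariant circle can support up to one extra essential class beyond those already detected by $f'_{|S}$.
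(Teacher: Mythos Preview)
You have misread the hypothesis. In this example ``$f'$ fixes the circle'' means \emph{pointwise}: the paper's own proof writes ``the circle that we add to $U$ in order to obtain $2\mathbb{T}$ is the set of fixed points of $f'$'' and later uses $\varLambda(f'_{||F|},|F|)=\varLambda(\mathrm{id},|F|)$. Under the correct reading the argument is much shorter than what you outline, and the ``hard part'' you isolate does not exist.

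Assuming $f$ has no fixed points in $U$, one has $\mathrm{Fix}(f')=S$. Since $S$ is connected it constitutes a \emph{single} Nielsen class $F$ of $f'$ (two fixed points joined by a path inside the fixed set are always Nielsen equivalent). Theorem~\ref{acotacion superficies} then gives $\mathrm{ind}(F)\le 1$ directly; there is no need to compare with the Nielsen classes of $f'_{|S}$, no transversal/tangential product formula, and no ``extra class'' to account for. By normalization and additivity of the index, $\mathrm{ind}(F)=\varLambda(f',2\mathbb{T})$, and your additivity identity becomes
\[
1\ge \varLambda(f',2\mathbb{T})=\varLambda_{\mathrm{comb}}(f,U)+\varLambda(\mathrm{id}_{S^1},S^1)=\varLambda_{\mathrm{comb}}(f,U)+\chi(S^1)=\varLambda_{\mathrm{comb}}(f,U),
\]
contradicting $\varLambda_{\mathrm{comb}}(f,U)\ge 2$. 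The threshold $2$ therefore arises because the added boundary is a circle with $\chi(S^1)=0$, in contrast with the one-point compactification case where $\chi(\{\infty\})=1$ produces the threshold $1$; it has nothing to do with an additional essential class. Your overall architecture (contradiction, additivity, Jiang's bound on $2\mathbb{T}$) is correct, but the setwise reading sends you after an estimate that is both unnecessary and, as you yourself note, not actually established in your sketch.
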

    The proof is similar to the ones of the previous theorems. To begin with, we suppose that $f$ has no fixed points, and so the circle that we add to $U$ in order to obtain $2\mathbb{T}$ is the set of fixed points of $f'$. Since this set is connected, it is the only fixed-point class of $f$ \cite[Corollary~1.14]{Jiang}. Let $F$ be this fixed-point class. Now, Theorem~\ref{acotacion superficies} implies $\mathrm{ind}(F)\leq 1$. Hence, since the definition of $\varLambda_{\mathrm{comb}}(f,U)$ is independent of the compactification of $U$,
   \begin{align*}
        1\geq\mathrm{ind}(F)=\varLambda(f',2\mathbb{T})&=\varLambda_{\mathrm{comb}}(f,U)+\varLambda(f',|F|)_{2\mathbb{T}}=\\
        \varLambda_{\mathrm{comb}}(f,U)+\varLambda(f'_{||F|},|F|)_{|F|}&=\varLambda_{\mathrm{comb}}(f,U)+\varLambda(\mathrm{id},|F|)=\\
        \varLambda_{\mathrm{comb}}(f,U)+\chi(S^1)&=\varLambda_{\mathrm{comb}}(f,U)+0\;,
   \end{align*}
   where $|F|$ represents the points of the class $F$; that is, the circle that we added to $U$ to obtain the double torus. Hence, we get $\varLambda_{\mathrm{comb}}(f,U)\leq 1$, a contradiction. 
    
   We will show now a specific case of this example where, in fact, we can verify the existence of fixed points with the naked eye. Let $g:U\rightarrow U$ be the rotation of $180$ degrees around the line in Figure~\ref{dos toros infinitos}. In a circle of the cylinders, the restriction of this map is a rotation $r:S^1\rightarrow S^1$ of $180$ degrees. We define now a homeomorphism $f$ homotopic to $g$ with the following description:

   In the region of $U$ that looks like a torus in Figure~\ref{dos toros infinitos}, we set $f=g$. However, in both cylinders, $f$ is the result of the canonical homotopy between $r:S^1\rightarrow S^1$ and $\mathrm{id}_{|S^1}$ such that the homotopy reaches $\mathrm{id}_{|S^1}$ at the infinity.

   We can extend the homeomorphism $f$ to a homeomorphism of $2\mathbb{T}$ that fixes $|F|$. Let $f'$ be this homeomorphism.

   Now, instead of computing $\varLambda_{\mathrm{comb}}(f,U)=\varLambda(f',U)_{2\mathbb{T}}$ (something that could become really tedious), we can compute $\varLambda_{\mathrm{comb}}(f,U)=\varLambda(f'',U)_{\mathbb{T}_1\sqcup\mathbb{T}_2}$, where $f''$ is the homeomorphism that maps each point $(x_1,y_1,x_2,y_2)\in\mathbb{T}_i=S^1_i\times S^1_i$ to $(-x_1,y_1,x_2,-y_2)\in\mathbb{T}_i$ for $i=1,2$ (note that $f''$ is not an extension of $f$ in $\mathbb{T}_1\sqcup\mathbb{T}_2$, but is homotopic to one of such extensions). Figure~\ref{dos toros} represents this homeomorphism.

   Due to the additivity of the combinatorial Lefschetz number, we have $$\varLambda(f'',U)_{\mathbb{T}_1\sqcup\mathbb{T}_2}=\varLambda(f'',\mathbb{T}_1\setminus \{p\})_{\mathbb{T}_1\sqcup\mathbb{T}_2}+\varLambda(f'',\mathbb{T}_2\setminus \{q\})_{\mathbb{T}_1\sqcup\mathbb{T}_2}$$
   and, by symmetry, 
   $$\varLambda(f'',U)_{\mathbb{T}_1\sqcup\mathbb{T}_2}=2\varLambda(f'',\mathbb{T}_1\setminus \{p\})_{\mathbb{T}_1\sqcup\mathbb{T}_2}=2\varLambda(f''_{|\mathbb{T}_1},\mathbb{T}_1\setminus \{p\})_{\mathbb{T}_1}.$$
   So, let us compute $\varLambda(f''_{|\mathbb{T}_1},\mathbb{T}_1\setminus \{p\})_{\mathbb{T}_1}$.

   Due to the additivity, $\varLambda(f''_{|\mathbb{T}_1},\mathbb{T}_1\setminus \{p\})_{\mathbb{T}_1}=\varLambda(f''_{\mathbb{T}_1},\mathbb{T}_1)-1$. Now, $f''^\ast:H^1(S^1_1\times S^1_1)\rightarrow H^1(S^1_1\times S^1_1)$ is the homomorphism that maps each generator $\alpha_i^\ast$ to $-\alpha_i^\ast$  for $i=1,2$, and hence, due to \cite{B-B-P-T}, 
   \begin{equation*}
       \varLambda(f''_{|\mathbb{T}_1},\mathbb{T}_1)=
             \mathrm{det} \begin{pmatrix}
          1-(-1)&0\\
          0&1-(-1)
        \end{pmatrix}
        =4\;,
   \end{equation*}
   and so $\varLambda(f''_{|\mathbb{T}_1},\mathbb{T}_1\setminus \{p\})_{\mathbb{T}_1}=3$. Consequently, $\varLambda_{\mathrm{comb}}(f,U)=2\cdot 3=6\geq 2$, obtaining that $f$ must have a fixed point.\qedhere
\end{example}
     \begin{figure}[htb]
    \centering
     \includegraphics[scale=0.4]{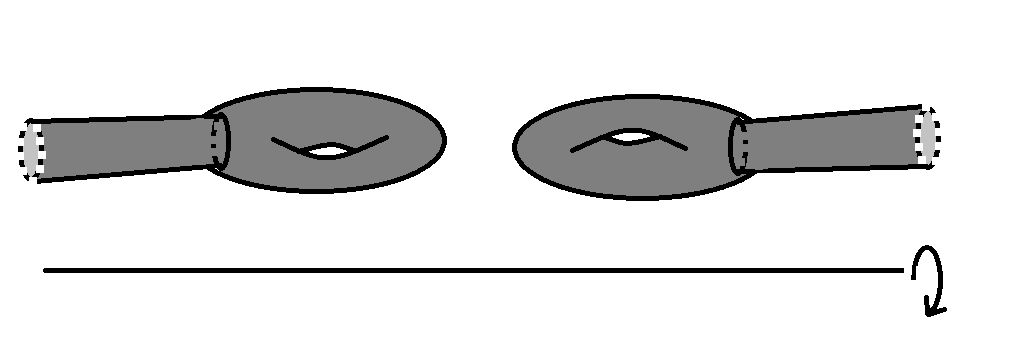} 
     \caption{The space $U$.}
     \label{dos toros infinitos}
\end{figure}
     \begin{figure}[htb]
    \centering
     \includegraphics[scale=0.4]{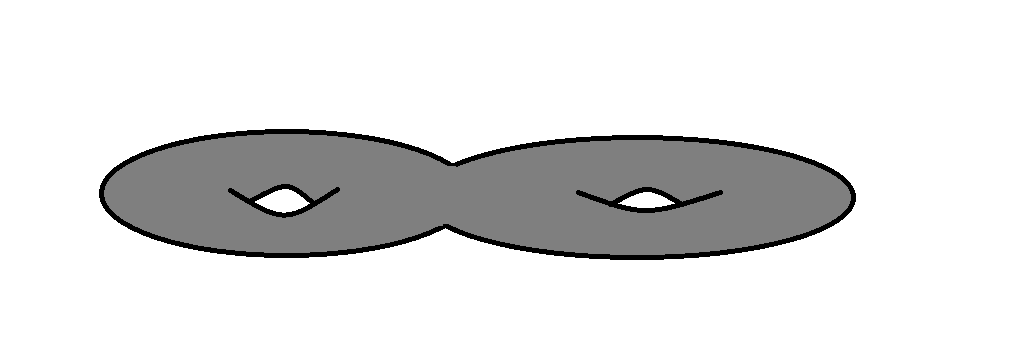} 
     \caption{Double torus $2\mathbb{T}$.}
     \label{doble toro}
\end{figure}
     \begin{figure}[htb]
    \centering
     \includegraphics[scale=0.4]{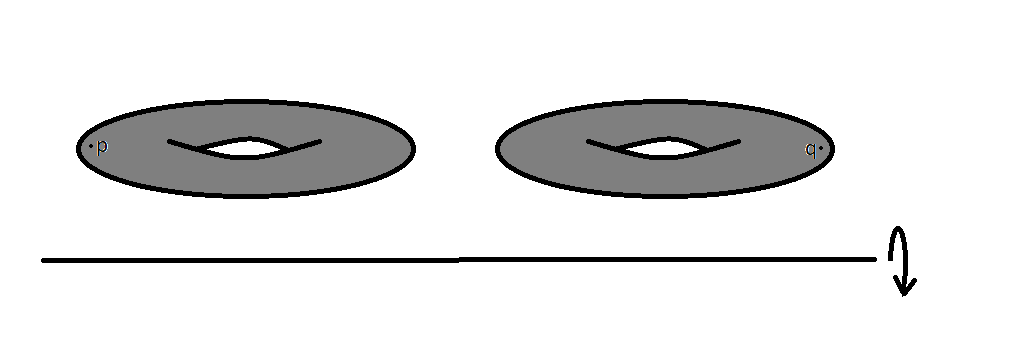} 
     \caption{$\mathbb{T}_1\sqcup\mathbb{T}_2$.}
     \label{dos toros}
\end{figure}

Before the last subsection of this paper, let us introduce a last example, in this case, of an application of Theorem~\ref{primer resultado gordo, wedge} for the case of just one surface in the wedge product. 
The purpose of this example is to show that the compactifications of the unbounded space can be much simpler than the Alexandroff's compactification, and hence, being able to compute $\varLambda_{\mathrm{comb}}$ is a great advantage compared to computing it only in the Alexandroff's compactification.
\begin{example}\label{klein punteada}
    Let $U'$ be one of the two components of the space $U$ in Example~\ref{wedge dos klein}. Then, $U'$ consists of a Klein bottle from which we have removed a point and then pulled a neighborhood of this point towards the infinity. Let $f:U\rightarrow U$ be a homeomorphism. The Alexandroff's compactification $U'^\infty$ of this space is the Klein bottle and so, Theorem~\ref{primer resultado gordo, wedge} gives us a criteria for detecting fixed points. However, if $f$ is good enough (in particular if $f$ is similar to the map of Example~\ref{ej compactificaciones al reves}), it is possible to compute $\varLambda_{\mathrm{comb}}(f,U')$ in the compactification of Figure~\ref{figura klein punteada}, which is much easier to work with.
\end{example}

     \begin{figure}[htb]
    \centering
     \includegraphics[scale=0.60]{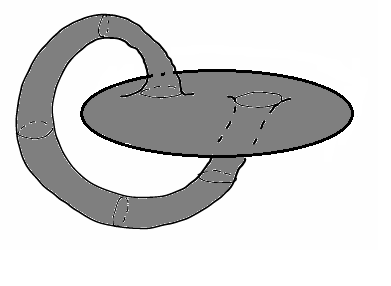} 
     \caption{Another compactification of $U$.}
     \label{figura klein punteada}
\end{figure}

\subsection{The importance of new bounds}\label{subseccion nuevas acotaciones}
This last subsection will focus on two different purposes. First, to show that the combinatorial Lefschetz number of unbounded spaces, together with Idea~\ref{idea teorema final}, can be used to determine when the index is not bounded by a given integer. We will illustrate the idea of using these tools in Theorem~\ref{el indice en la esfera no esta acotado}. 
Later, as the final part of this paper, we will insist on the importance of finding new bounds of the index. We will show their power in Theorem~\ref{pseudoteorema} and Example~\ref{ultimo ejemplo}.

The bounds of the fixed-point index are extensively studied in fixed-point theory. The relevance of these bounds can be seen in \cite{Jiang acotacion,Le Roux} or in the present paper. Consequently, many other works have been done providing new results on bounds for the index. We could mention, for example, \cite{G-K,G-K2,J-W-Z,Kelly,Kelly3}. Some works also focus on finding counterexamples where the index is not bounded \cite{Z-Z}.

It is precisely in this search of counterexamples that the combinatorial Lefschetz number of unbounded spaces and Idea~\ref{idea teorema final} become a very useful tool. In order to illustrate this, we will provide another proof, using Idea~\ref{idea teorema final}, of a well-known result that states that the fixed-point index in the sphere $S^2$ is not bounded by $1$. Again, we must note that a similar argument (although it is not frequent in the literature) can be considered using the classical Lefschetz number, but the advantage of the combinatorial Lefschetz number is that it is usually much easier to compute. In this theorem, for example, instead of using Alexandroff's compactification for the open square $(0,1)^2$, we use the closed square $[0,1]^2$ (in this case, both computations are similar; however, in a less illustrative context, the computation of the Lefschetz number in Alexandroff's compactification can become very tedious).

\begin{theorem}\label{el indice en la esfera no esta acotado}
    There exists a homeomorphism $\tilde{f}:S^2\rightarrow S^2$  such that $\tilde{f}$ has an isolated fixed point of index $2$ and $\#\mathrm{Fix}(\tilde{f})\leq\#\mathrm{Fix}(g)$ for every map $g$ homotopic to $\tilde{f}$. 
\end{theorem}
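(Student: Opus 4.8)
The plan is to realize $S^2$ as the one-point compactification of the open square $U=(0,1)^2\cong\mathbb{R}^2$, to produce a fixed-point-free homeomorphism $f$ of $U$ whose extension $\tilde f$ to $U^\infty=S^2$ has $\infty$ as its only fixed point, and then to read off $\mathrm{ind}(\infty)$ via Idea~\ref{idea teorema final} as $\varLambda_{\mathrm{comb}}(f,U)+1$. The whole point of the argument (and the reason it showcases the method) is that $\varLambda_{\mathrm{comb}}(f,U)$ is computed not in the Alexandroff compactification $S^2$ but in the far simpler compactification $\overline U=[0,1]^2$. Concretely, I would first construct a homeomorphism $f'\colon[0,1]^2\to[0,1]^2$ that maps the interior $U$ onto itself, maps the boundary $\partial=\partial([0,1]^2)$ onto itself, and has no fixed points in $U$: for instance one conjugates a translation of $\mathbb{R}^2$ through a product homeomorphism $\mathbb{R}^2\cong(0,1)^2$, or takes the time-one map of a vector field positive on $U$ and vanishing on $\partial$. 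In either case $f=f'|_U$ is a fixed-point-free proper homeomorphism, so its extension $\tilde f\colon U^\infty\to U^\infty$ fixing $\infty$ is a homeomorphism of $S^2$ whose unique fixed point is $\infty$.

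Next I would compute $\varLambda_{\mathrm{comb}}(f,U)$ in the closed square. Since $f'$ is open and preserves both $U$ and $\partial$, Definition~\ref{def numero comb en no acotados} together with the invariance under change of compactification (Theorem~\ref{teor inv topo} and Remark~\ref{obs numero para abtas es inv}) gives $\varLambda_{\mathrm{comb}}(f,U)=\varLambda(f',U)_{[0,1]^2}$. By additivity of the combinatorial Lefschetz number and \cite[Remark 3.5]{M-M1} (the boundary $\partial\cong S^1$ is a subcomplex),
\[
\varLambda(f',[0,1]^2)=\varLambda(f',U)_{[0,1]^2}+\varLambda(f'|_\partial,\partial).
\]
Here $\varLambda(f',[0,1]^2)=1$ since the square is contractible, and $\varLambda(f'|_\partial,\partial)=1-\deg(f'|_\partial)=0$ because $f'|_\partial$ is a degree-one self-homeomorphism of the circle (the identity, if the flow construction is used), exactly as in Idea~\ref{computo en esferas con combinatorio}. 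Hence $\varLambda_{\mathrm{comb}}(f,U)=1$.

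Finally I would conclude with Idea~\ref{idea teorema final}. Because $\infty$ is the only fixed point of $\tilde f$, the normalization and additivity axioms of the index give $\mathrm{ind}(\infty)=\varLambda(\tilde f,S^2)$, while additivity of the combinatorial Lefschetz number gives
\[
\varLambda(\tilde f,S^2)=\varLambda(\tilde f,U)_{S^2}+\varLambda(\tilde f,\{\infty\})_{S^2}=\varLambda_{\mathrm{comb}}(f,U)+1=2.
\]
Thus $\tilde f$ has an isolated fixed point of index $2$. For the minimality clause, $\varLambda(\tilde f,S^2)=2\neq 0$ is a homotopy invariant, so every $g$ homotopic to $\tilde f$ has $\varLambda(g,S^2)=2\neq 0$ and hence a fixed point by the Lefschetz fixed-point theorem; therefore $\#\mathrm{Fix}(g)\ge 1=\#\mathrm{Fix}(\tilde f)$, which is precisely the required inequality.

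I expect the main obstacle to lie in the first step: exhibiting an $f'$ that is genuinely fixed-point-free on the open square yet extends to a homeomorphism of the closed square preserving the boundary, and checking that the induced map $\tilde f$ on $U^\infty=S^2$ is a well-defined homeomorphism (which reduces to properness of $f$, i.e.\ that interior points approaching $\partial$ are carried to points approaching $\partial$). Once a correct $f'$ is fixed, everything else is formal: the index computation is pure additivity combined with the well-definedness of $\varLambda_{\mathrm{comb}}$ under change of compactification.
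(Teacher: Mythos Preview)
Your proposal is correct and follows essentially the same route as the paper: both take a fixed-point-free translation-type homeomorphism of the plane (the paper uses $f(x,y)=(x,y+1)$ on $\mathbb{R}^2$, you conjugate it into $(0,1)^2$), compute $\varLambda_{\mathrm{comb}}(f,U)=1$ via the closed-square compactification $[0,1]^2$ exactly as you do, and conclude $\mathrm{ind}(\infty)=2$ from Idea~\ref{idea teorema final}. The only cosmetic difference is that the paper wraps the argument as a proof by contradiction (assuming the index bound $\le 1$ and deriving a contradiction with $\varLambda_{\mathrm{comb}}(f,\mathbb{R}^2)=1$), whereas you present it as a direct construction and verify the minimality clause explicitly via $\varLambda(\tilde f,S^2)=2\neq 0$; the content is identical.
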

\begin{proof}
    Suppose that the index of every isolated fixed point is less or equal to $1$ for every map satisfying the hypothesis. In this case, repeating again the arguments of Idea~\ref{idea teorema final}, it is easy to see that every homeomorphism $f:\mathbb{R}^2\rightarrow \mathbb{R}^2$ with $\varLambda_{\mathrm{comb}}(f,\mathbb{R}^2)\geq 1$ must have a fixed point. 

    However, if we define $f:\mathbb{R}^2\rightarrow \mathbb{R}^2$ as the homeomorphism that maps each $(x,y)$ to $(x,y+1)$, $f$ has no fixed points but $\varLambda_{\mathrm{comb}}(f,\mathbb{R}^2)=1$. 
    This is because we can triangulate  $\mathbb{R}^2$ by $(0,1)^2$ is such a way that the homeomorphism $h$ induced by $f$ through this triangulation can be extended to a homeomorphism $h':[0,1]^2\rightarrow [0,1]^2$ such that $h'_{\partial[0,1]^2}$ is homotopic to the identity. Hence,
    \begin{align*}
        &\varLambda_{\mathrm{comb}}(f,\mathbb{R}^2):=\varLambda(h',(0,1)^2)_{[0,1]^2}=\\
        &\varLambda(h',[0,1]^2)_{[0,1]^2}-\varLambda(h',\partial [0,1]^2)_{[0,1]^2}=\\
        &\varLambda(h',[0,1]^2)-\varLambda(h',\partial [0,1]^2)=1-\varLambda(\mathrm{id},\partial[0,1]^2)=1.
    \end{align*}
    (Here $\varLambda(h',[0,1]^2)=1$ since the space is simply connected.)
\end{proof}

Now we will see the relevance of finding new bounds for the index. A very interesting situation is when our space is the collapse of certain compact connected surfaces of non-positive Euler characteristic in such a way that, except at a finite number of points where the resulting space is locally a wedge product of surfaces, at all other points, the collapse is locally a surface. Figure~\ref{toro colapso circunferencia} show two possible examples of these spaces, both obtained from a collapse of the torus. Especially for $X_2$, although the point of the collapse is not a global separating point, it seems some argument similar to that used in \cite{G-K} to prove Theorem~\ref{acotacion wedge} could give a bound for the index.

   \begin{figure}[htb]
    \centering
     \includegraphics[scale=0.55]{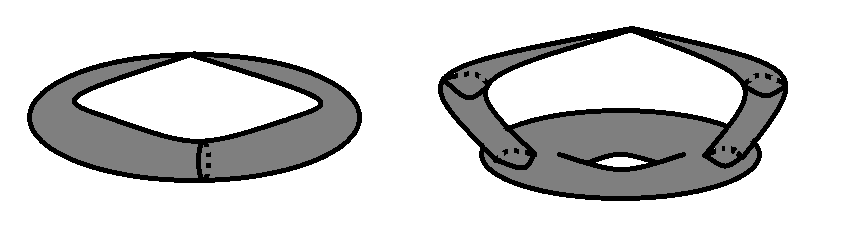} 
     \caption{Collapse of the torus by a circle ($X_1$), and collapse of the torus by two points ($X_2$).}
     \label{toro colapso circunferencia}
\end{figure}

The third space in Figure~\ref{pasos toro} shows another example of these spaces. In this case, $U^\infty/S$ is locally a surface at all points except two. 

Finally, another example can be the wedge of surfaces by more than one point (see the first space in Figure~\ref{wedge por varios puntos}). These spaces are especially interesting when using the combinatorial number, since, to compute $\varLambda_{\mathrm{comb}}(f,U)$, we can find more regular compactifications than $U^\infty$ where the computations of the combinatorial Lefschetz number would be much easier. The second space in Figure~\ref{wedge por varios puntos} shows this kind of more regular compactification.

     \begin{figure}[htb]
    \centering
     \includegraphics[scale=0.3]{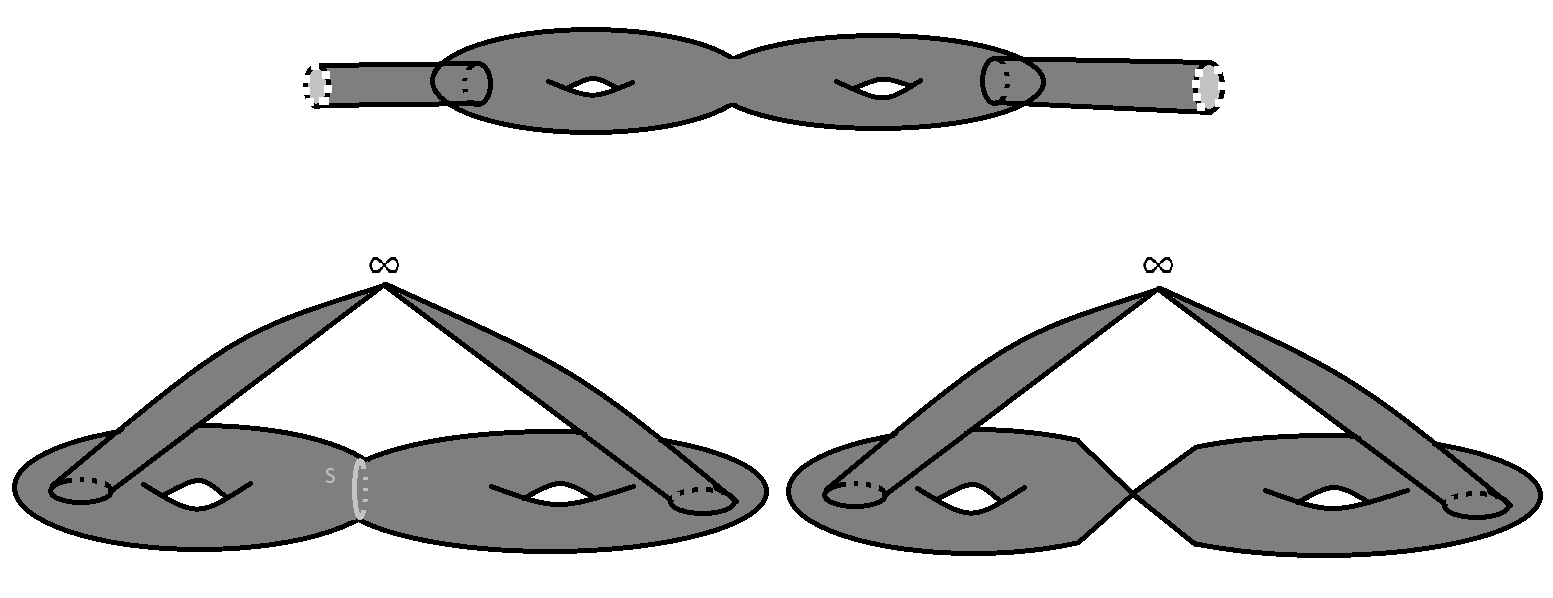} 
     \caption{$U$, $U^\infty$ and $U^\infty/S$.}
     \label{pasos toro}
\end{figure}

     \begin{figure}[htb]
    \centering
     \includegraphics[scale=0.35]{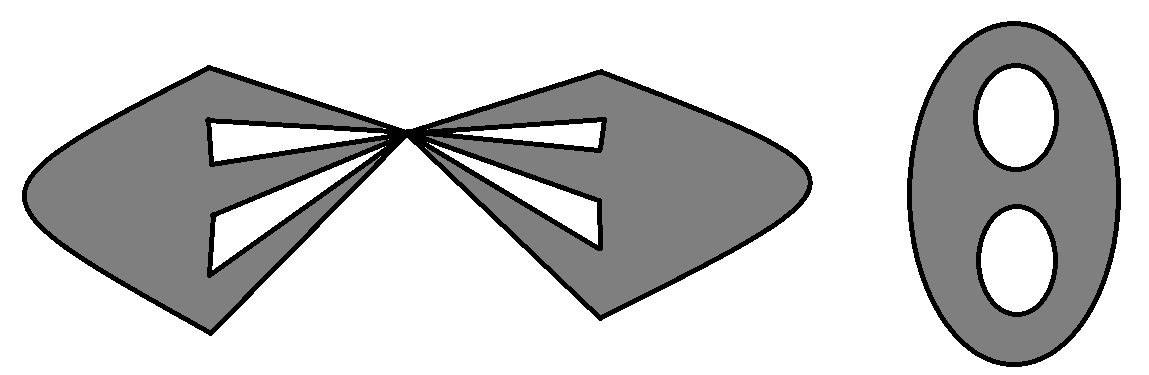} 
     \caption{Two different compactifications of the same space.}
     \label{wedge por varios puntos}
\end{figure}

Imagine that, at least for some of these types of collapse, the index is bounded. We can propose the following conjecture. Then we will show the applications that this conjecture would have if it were true.
\begin{conjetura}\label{conjetura}
    Let $X$ be a collapse of a finite number of surfaces in one of the ways described above, let $f:X\rightarrow X$ be a homeomorphism such that $\#\mathrm{Fix}(f)\leq\#\mathrm{Fix}(g)$ for every map $g$ homotopic to $f$ and let $x\in X$ be a fixed point of $f$. Then, $\mathrm{ind}(x)\leq 1$ and the fixed-point index of each isolated fixed point is also bounded by a lower bound depending on de Euler characteristic of $X$.
\end{conjetura}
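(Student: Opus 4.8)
The plan is to localize the problem at the finitely many singular points and to reduce everything, via the additivity of the fixed-point index \cite{Brown}, to index computations on genuine surfaces where Theorems~\ref{acotacion superficies}, \ref{acotacion superficies borde} and \ref{acotacion wedge} already apply. First I would let $\Sigma\subset X$ be the finite set of points at which $X$ fails to be locally a surface (the collapse locus and the wedge points), and observe that away from $\Sigma$ the space is locally homeomorphic to a surface, possibly with boundary. Hence, if $x$ is a fixed point of $f$ with $x\notin\Sigma$, a small $f$-invariant neighborhood of its fixed-point class can be excised, and the bound $\mathrm{ind}(x)\le 1$ follows directly from the surface estimates of Theorem~\ref{acotacion superficies} or Theorem~\ref{acotacion superficies borde}, exactly as in the compact surface case.

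The substantive content is therefore concentrated at the singular points $x_0\in\Sigma$. Since $f$ is a homeomorphism fixing $x_0$, it permutes the finitely many local branches (surface sheets) meeting at $x_0$, and only the branches that $f$ carries to themselves can contribute to the local index. On each such fixed branch I would cut $X$ open along $\Sigma$ to obtain a surface with boundary, double it if necessary, and transport the restriction of $f$ to a self-map of this auxiliary compact surface, so that the branch-wise contribution is controlled by Theorem~\ref{acotacion wedge}, which is precisely the one-point wedge case proved in \cite{G-K}. The upper bound $\mathrm{ind}(x_0)\le 1$ should then follow by expressing the local index as an alternating trace over the local homology of the wedge or collapse and combining the permutation action on the branches (which contributes at most $1$, from a single ``diagonal'' fixed branch) with the surface bound on each fixed sheet. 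For the lower bound, I would sum the branch-wise lower bounds of Theorems~\ref{acotacion superficies} and \ref{acotacion wedge} over all fixed points and compare with the Lefschetz--Hopf identity $\sum_x \mathrm{ind}(x)=\varLambda(f,X)$ together with $\varLambda(\mathrm{id},X)=\chi(X)$, which is exactly how the Euler-characteristic-dependent lower bound arises in \cite{G-K}.

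The main obstacle, and the reason this remains a conjecture, is the control of the local index at the genuinely non-manifold points of the collapse, in particular for the collapse-by-a-circle space $X_1$ and the collapse-by-two-points space $X_2$ of Figure~\ref{toro colapso circunferencia}. The local homology of $X$ at such a point is not that of a wedge of surfaces, so the clean branch decomposition used in the one-point wedge case of \cite{G-K} is not immediately available, and identifying the local index with a sum of surface contributions requires a local excision together with a Wecken-type minimality argument that respects the singular stratum. Establishing that the minimality hypothesis $\#\mathrm{Fix}(f)\le\#\mathrm{Fix}(g)$ really forces $\mathrm{ind}(x_0)\le 1$ at these points---that is, that no homotopy can be obstructed by the singularity from pushing excess index away---is the crux. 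The expectation, suggested by the $X_2$ case where the collapse point is locally (though not globally) separating, is that an argument in the spirit of \cite{G-K} can be adapted; making this rigorous at a general collapse stratum is the key difficulty.
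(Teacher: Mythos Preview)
The statement you were asked to prove is not a theorem in the paper but an explicit \emph{conjecture}: the authors introduce it with ``We can propose the following conjecture. Then we will show the applications that this conjecture would have if it were true,'' and they give no proof. So there is no ``paper's own proof'' to compare against. You correctly recognize this in your third paragraph, where you identify the local index at the genuinely non-manifold collapse points as the unresolved issue.

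Your strategic outline is consistent with the paper's own brief commentary: the authors also single out the spaces $X_1$ and $X_2$ of Figure~\ref{toro colapso circunferencia} and remark that for $X_2$ ``it seems some argument similar to that used in \cite{G-K} to prove Theorem~\ref{acotacion wedge} could give a bound for the index.'' That is essentially the same heuristic you propose---localize at $\Sigma$, handle the smooth part via Theorems~\ref{acotacion superficies} and~\ref{acotacion superficies borde}, and try to adapt \cite{G-K} at the singular points. The paper offers no further detail beyond this, so your sketch is neither more nor less complete than what the authors themselves suggest. The honest status is that the conjecture is open, the obstruction you name (controlling the local index at collapse points whose local homology is not that of a wedge) is the right one, and no argument in the paper or in your proposal closes it.
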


If this conjecture is true, we can obtain the following result, whose proof is similar to that of the first statement in Theorem~\ref{primer resultado gordo, wedge}.
\begin{theorem}\label{pseudoteorema}
    Suppose that Conjecture~\ref{conjetura} is true. Let $U\subset\mathbb{R}^n$ be a closed definable subspace of $\mathbb{R}^n$ such that $U^\infty$ is one of the spaces for which Conjecture~\ref{conjetura} applies and let $f:U\rightarrow U$ be a homeomorphism. Then, if $\varLambda_{\mathrm{comb}}(f,U)\geq1$, $f$ must have a fixed point.
\end{theorem}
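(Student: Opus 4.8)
The plan is to argue by contradiction, following the scheme of Idea~\ref{idea teorema final} exactly as in the first part of Theorem~\ref{primer resultado gordo, wedge}, but invoking Conjecture~\ref{conjetura} in place of Theorem~\ref{acotacion wedge}. First I would assume that $f$ has no fixed points. Since $U$ is closed, definable and unbounded in $\mathbb{R}^n$ and $f$ is a homeomorphism, Remark~\ref{la extension de homeos de cerrados existe} guarantees that $f$ extends to a homeomorphism $\tilde{f}:U^\infty\to U^\infty$ with $\tilde{f}(\infty)=\infty$. Because $f$ has no fixed points, $\infty$ is the unique fixed point of $\tilde{f}$, so its fixed-point class reduces to the single isolated point $\infty$ and $\mathrm{ind}(\infty)$ coincides with the index of this class.

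Next I would compute $\varLambda(\tilde{f},U^\infty)$ via the additivity of the combinatorial Lefschetz number together with the normalization axiom of the index, exactly as in Idea~\ref{idea teorema final}:
\begin{equation*}
\varLambda(\tilde{f},U^\infty)=\varLambda(\tilde{f},U)_{U^\infty}+\varLambda(\tilde{f},\{\infty\})_{U^\infty}=\varLambda_{\mathrm{comb}}(f,U)+1\;.
\end{equation*}
Under the hypothesis $\varLambda_{\mathrm{comb}}(f,U)\geq 1$ this yields $\varLambda(\tilde{f},U^\infty)\geq 2\neq 0$, so the classical Lefschetz fixed-point theorem forces every map homotopic to $\tilde{f}$ to have a fixed point. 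Since $\tilde{f}$ has only one fixed point, it therefore realizes the minimum number of fixed points in its homotopy class; that is, $\#\mathrm{Fix}(\tilde{f})\leq\#\mathrm{Fix}(g)$ for every $g$ homotopic to $\tilde{f}$, which is precisely the hypothesis required to apply Conjecture~\ref{conjetura}.

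Finally, since by assumption $U^\infty$ is one of the spaces for which Conjecture~\ref{conjetura} applies, the conjecture yields $\mathrm{ind}(\infty)\leq 1$. Combining this with the normalization and additivity axioms of the index, which give $\mathrm{ind}(\infty)=\varLambda(\tilde{f},U^\infty)$, I obtain
\begin{equation*}
1\geq\mathrm{ind}(\infty)=\varLambda(\tilde{f},U^\infty)=\varLambda_{\mathrm{comb}}(f,U)+1\;,
\end{equation*}
whence $\varLambda_{\mathrm{comb}}(f,U)\leq 0$, contradicting $\varLambda_{\mathrm{comb}}(f,U)\geq 1$. Hence $f$ must have a fixed point. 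The only genuinely delicate point, and the one that must be handled with the same care as in Theorem~\ref{primer resultado gordo, wedge}, is the verification that the minimality condition $\#\mathrm{Fix}(\tilde{f})\leq\#\mathrm{Fix}(g)$ holds so that Conjecture~\ref{conjetura} is actually applicable; this is exactly where the nonvanishing of $\varLambda(\tilde{f},U^\infty)$, and hence the hypothesis $\varLambda_{\mathrm{comb}}(f,U)\geq 1$, is genuinely used, rather than merely in the final numerical comparison.
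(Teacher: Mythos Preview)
Your proposal is correct and follows essentially the same approach as the paper, which simply states that the proof is similar to that of the first statement in Theorem~\ref{primer resultado gordo, wedge}. You have in fact spelled out the argument in more detail than the paper does, including the careful justification that $\tilde{f}$ satisfies the minimality hypothesis $\#\mathrm{Fix}(\tilde{f})\leq\#\mathrm{Fix}(g)$ required by Conjecture~\ref{conjetura}.
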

Let us illustrate the advantage this theorem with a last example. The idea and the computations of this example are similar to those in Example~\ref{ej compactificaciones al reves}. Again, in this example we can verify the existence of fixed points with the naked eye.

\begin{example}\label{ultimo ejemplo}
    Let $U$ be the space in Figure~\ref{toro infinito doble} and let $f$, as in Example~\ref{ej compactificaciones al reves}, be the rotation of $180$ degrees around the plane $z=0$. Then, as in Example~\ref{ej compactificaciones al reves}, computing $\varLambda_{\mathrm{comb}}(f,U)$ with the compactification given by the torus instead of $U^\infty=X_2$ (with $X_2$ the second space in Figure~\ref{toro colapso circunferencia}) and using \cite{B-B-P-T}, it is easy to obtain $\varLambda_{\mathrm{comb}}(f,U)=2\geq1$. Hence, if Conjecture~\ref{conjetura} is true, Theorem~\ref{pseudoteorema} implies the existence of a fixed point of $f$.
\end{example}
     \begin{figure}[htb]
    \centering
     \includegraphics[scale=0.45]{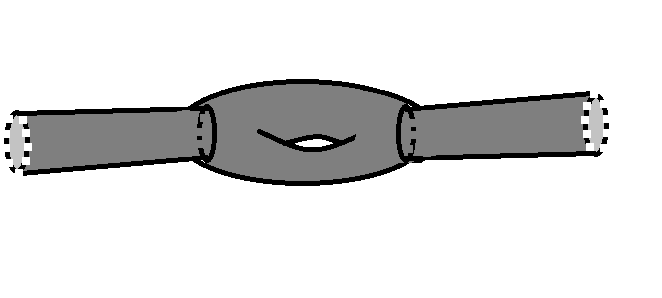} 
     \caption{Space $U$.}
     \label{toro infinito doble}
\end{figure}

Note that the number of cases where we could apply Theorem~\ref{pseudoteorema} and the computational advantage of the combinatorial Lefschetz number would be now even much greater than before. 

 \end{document}